\documentclass[12pt,reqno]{amsart}
\usepackage{amsmath}
\usepackage{mathtools}

\DeclareMathOperator*{\argmax}{arg\,max}
\usepackage{graphicx} 
\usepackage{microtype}
\usepackage[british]{babel}
\usepackage[utf8]{inputenc}
\usepackage{imakeidx}
\usepackage{subfigure}
\usepackage{graphicx}
\usepackage{array}
\usepackage[dvipsnames,table]{xcolor}
\usepackage[comma,sort&compress]{natbib}

\usepackage{xargs}                      % Use more than one optional parameter in a new commands
\newcommandx{\unsure}[2][1=]{\todo[linecolor=red,backgroundcolor=red!25,bordercolor=red,#1]{#2}}
\newcommandx{\change}[2][1=]{\todo[linecolor=blue,backgroundcolor=blue!25,bordercolor=blue,#1]{#2}}
\newcommandx{\info}[2][1=]{\todo[linecolor=OliveGreen,backgroundcolor=OliveGreen!25,bordercolor=OliveGreen,#1]{#2}}
\newcommandx{\improvement}[2][1=]{\todo[linecolor=Plum,backgroundcolor=Plum!25,bordercolor=Plum,#1]{#2}}
\newcommandx{\thiswillnotshow}[2][1=]{\todo[disable,#1]{#2}}
\usepackage[hyperindex,breaklinks]{hyperref}
\hypersetup{ colorlinks=true, linkcolor=blue, citecolor=blue,
  filecolor=blue, urlcolor=blue}\usepackage{blindtext}
\usepackage{MnSymbol}
\usepackage{appendix}
\usepackage{tikz}
\usetikzlibrary{arrows.meta,calc,positioning,patterns,decorations.pathmorphing}
\definecolor{graphgreen}{RGB}{30,110,74}
\definecolor{graphred}{RGB}{170,38,44}
\definecolor{graphblue}{RGB}{35,55,142}
\definecolor{graphpurple}{RGB}{77,39,108}

\usepackage{mathtools}
\usepackage{dsfont}
\usepackage{bbm}
\usepackage{enumitem}
\usepackage[final]{showkeys}
\usepackage[margin=2.5cm]{geometry}

\numberwithin{equation}{section}
 \makeatletter
\def\paragraph{\@startsection{paragraph}{4}%
  \z@\z@{-\fontdimen2\font}%
  {\bfseries\itshape}}
  \makeatother

\newtheorem{thm}{Theorem}[section]
\newtheorem*{thm*}{Theorem}
\newtheorem{corollary}{Corollary}[section]
\newtheorem{prop}{Proposition}[section]
\newtheorem{lemma}{Lemma}[section]

\theoremstyle{definition}

\newtheorem{remark}{Remark}[section]

\DeclareMathOperator{\argmin}{\mathrm{argmin}}

\newcommand{\Var}{\mathrm{Var}}

\newcommand{\floor}[1]{\left\lfloor #1\right\rfloor}
\newcommand{\abs}[1]{\left| #1\right|}

\newcommand{\ONE}{\mathbbm{1}}
\newcommand{\bbP}{\mathbb{P}}%UST probability
\newcommand{\PRW}{\mathcal{P}}%RW probability
\newcommand{\bbE}{\mathbb{E}}
\newcommand{\clE}{\mathcal{E}}

\newif\ifrevisions
\revisionstrue
\makeatother

\begin{document}

\title{Avalanches of the Abelian sandpile on the hierarchical lattice}

\author{Luca Cattani}
\address{%
University College London, Department of Statistical Science, London, United Kingdom.
}
\email{luca.cattani.25@ucl.ac.uk}
\author{Leandro Chiarini}
\address{%
  Instituto de Matemática e Estatística,  Universidade de São Paulo, São Paulo, Brazil.
}
\email{lchiarini@usp.br}
\author{Alessandra Cipriani}
\address{%
 University College London, Department of Statistical Science, London, United Kingdom.
}
\email{a.cipriani@ucl.ac.uk}
\date{\today}

\thanks{LC is supported by the
São Paulo Research Foundation (FAPESP) -- Process Number \#2026/10177-2. LC carried out part of this project while attending the Trimester Program “Probabilistic methods in quantum field theory” at the Hausdorff Research Institute for Mathematics, funded by the Deutsche Forschungsgemeinschaft (DFG, German Research Foundation) under Germany's Excellence Strategy --EXC-2047/1-- 390685813.
AC is supported by EPSRC New Investigator Grant UKRI3308. The authors would like to thank D. Dhar, J.--P. Eckmann, V. Jacquier, X. Li and W. Ruszel for helpful conversations.}

\begin{abstract}
We study the Abelian sandpile model (ASM) on a sequence of finite fractal graphs: the hierarchical, or Migdal--Kadanoff, lattices. {Our first main result consists of matching power-law tail bounds for the avalanche and the uniform wave started at certain} ``pivotal'' vertices. {We also establish a limiting result for the pivotal reach of the avalanche and, finally, a scaling limit for the size of a uniform wave started at a vertex of high degree.}
\end{abstract}
\maketitle

\section{Introduction}
The Abelian sandpile model is a fundamental example of self-{organised} criticality~\citep{PCK}: the stochastic system reaches criticality without an externally tuned temperature or density. The model is defined as follows. {On a finite connected graph with no self-loops and with a distinguished sink}, vertices hold grains; whenever a vertex contains {a number of grains greater than its graph degree}, it topples, sending one grain along each incident edge, while grains reaching the sink disappear. A new grain is added only after {stabilisation}. {\em Abelian} means that the final configuration and toppling counts do not depend on the order of topplings. Repeated addition and {stabilisation} produce the uniform stationary measure on recurrent configurations, and the study of criticality for this process concerns the large-system behaviour of the stationary process. 

{In this paper, the main observable of interest is the so-called} {\em avalanche cluster} $Av^x$: the set of vertices that topple after a grain is added at a site $x$, called the {\em source}. We will refer to the cluster simply as ``avalanche'' in what follows. Its size is the number of distinct toppled vertices. We are interested in
estimates of the form
\[
P(|Av^x|\geq s)\approx {s^{-\alpha}.}\]
 Thus $\alpha$ is the exponent of the cumulative tail. In the stationary ASM, avalanche sizes are expected to exhibit power-law tails in the thermodynamic limit, reflecting the scale invariance associated with self-{organised} criticality. The central question is how this
exponent depends on the geometry of the underlying graph and on the location of the source relative
to the sink~\citep{PCK,dhar2006theoretical}.\subsection*{Previous work and the setting of this paper}  The mean-field tail exponent $\alpha=1/2$ is established on regular trees~\citep{dhar1990Bethe} and complete graphs~\citep{janowsky1993exact} in their appropriate limiting {regimes. \citeauthor{jarai2020mean}} proved matching bounds on surviving supercritical {Bienaym\'e--Galton--Watson} trees when the offspring distribution has an exponential moment. Another work exhibiting avalanche-mass exponents on self-similar graphs is~\cite{matter2014abelian}. Earlier
numerical and scaling work on fractal graphs includes the study of the Sierpiński gasket by
\citeauthor{DaerdenVanderzande1998}. As for the Euclidean lattice, after earlier work by~\citeauthor{bhupatiraju2017inequalities}, \citeauthor{hutchcroft2020universality} showed for the avalanche cluster a tail exponent of $1/2$ on $\mathbb Z^d$ for $d\ge5$, with the same exponent for the avalanche volume, and extended mean-field results to broader graph classes, including transitive graphs of polynomial growth dimension exceeding four and bounded-degree nonamenable graphs, subject to the precise hypotheses and qualifications of those theorems. In dimension four, rigorous estimates identify the leading mean-field exponent with logarithmic corrections for several uniform spanning tree observables~\citep{HutchcroftSousi2023}. A recent preprint by~\citeauthor{li2026tailexponentsthreedimensionaluniform} establishes $ P(|Av|\ge s)\leq s^{-1/3+o(1)}$ on $\mathbb Z^3$\footnote{Personal communication with the authors.}. {The almost sure finiteness and a radius upper bound on two-dimensional avalanches have recently} been announced in~\cite{bourabee2026sandpileavalanchesdimensionsfinite}. 

We consider the ASM on diamond hierarchical graphs, also called {Migdal--Kadanoff} lattices~\citep{griffiths1982spin,kadanoff1976notes,berker1979renormalisation,kaufman1981exactly}. Starting
from a single edge with endpoints $a$ and $z$, each construction step replaces every edge by $b$
parallel paths, each containing $s$ edges, where $b,\,s\geq 2$. An example of a lattice $D_n^{b,s}$ is shown in Figure~\ref{fig:MKlattice}.
\begin{figure}[h!]
  \begin{tikzpicture}[x=1cm,y=1cm,line cap=round,line join=round,
  bond/.style={draw=black,line width=0.65pt},
  site/.style={circle,fill=black,inner sep=0pt,minimum size=3.3pt}]
% D_0 is one edge. Every step replaces each edge by a diamond.
% All vertices, including degree-two subdivision sites, are drawn.
% D_3: 44 vertices, 64 edges
\coordinate (top-0) at (0.000000,5.600000);
\coordinate (top-1) at (12.000000,5.600000);
\coordinate (top-2) at (6.000000,7.550000);
\coordinate (top-3) at (3.000000,7.121000);
\coordinate (top-4) at (1.500000,6.513380);
\coordinate (top-5) at (1.500000,6.207620);
\coordinate (top-6) at (4.500000,7.488380);
\coordinate (top-7) at (4.500000,7.182620);
\coordinate (top-8) at (3.000000,6.029000);
\coordinate (top-9) at (1.500000,5.967380);
\coordinate (top-10) at (1.500000,5.661620);
\coordinate (top-11) at (4.500000,6.942380);
\coordinate (top-12) at (4.500000,6.636620);
\coordinate (top-13) at (9.000000,7.121000);
\coordinate (top-14) at (7.500000,7.488380);
\coordinate (top-15) at (7.500000,7.182620);
\coordinate (top-16) at (10.500000,6.513380);
\coordinate (top-17) at (10.500000,6.207620);
\coordinate (top-18) at (9.000000,6.029000);
\coordinate (top-19) at (7.500000,6.942380);
\coordinate (top-20) at (7.500000,6.636620);
\coordinate (top-21) at (10.500000,5.967380);
\coordinate (top-22) at (10.500000,5.661620);
\coordinate (top-23) at (6.000000,3.650000);
\coordinate (top-24) at (3.000000,5.171000);
\coordinate (top-25) at (1.500000,5.538380);
\coordinate (top-26) at (1.500000,5.232620);
\coordinate (top-27) at (4.500000,4.563380);
\coordinate (top-28) at (4.500000,4.257620);
\coordinate (top-29) at (3.000000,4.079000);
\coordinate (top-30) at (1.500000,4.992380);
\coordinate (top-31) at (1.500000,4.686620);
\coordinate (top-32) at (4.500000,4.017380);
\coordinate (top-33) at (4.500000,3.711620);
\coordinate (top-34) at (9.000000,5.171000);
\coordinate (top-35) at (7.500000,4.563380);
\coordinate (top-36) at (7.500000,4.257620);
\coordinate (top-37) at (10.500000,5.538380);
\coordinate (top-38) at (10.500000,5.232620);
\coordinate (top-39) at (9.000000,4.079000);
\coordinate (top-40) at (7.500000,4.017380);
\coordinate (top-41) at (7.500000,3.711620);
\coordinate (top-42) at (10.500000,4.992380);
\coordinate (top-43) at (10.500000,4.686620);
\draw[bond] (top-0) -- (top-4);
\draw[bond] (top-4) -- (top-3);
\draw[bond] (top-0) -- (top-5);
\draw[bond] (top-5) -- (top-3);
\draw[bond] (top-3) -- (top-6);
\draw[bond] (top-6) -- (top-2);
\draw[bond] (top-3) -- (top-7);
\draw[bond] (top-7) -- (top-2);
\draw[bond] (top-0) -- (top-9);
\draw[bond] (top-9) -- (top-8);
\draw[bond] (top-0) -- (top-10);
\draw[bond] (top-10) -- (top-8);
\draw[bond] (top-8) -- (top-11);
\draw[bond] (top-11) -- (top-2);
\draw[bond] (top-8) -- (top-12);
\draw[bond] (top-12) -- (top-2);
\draw[bond] (top-2) -- (top-14);
\draw[bond] (top-14) -- (top-13);
\draw[bond] (top-2) -- (top-15);
\draw[bond] (top-15) -- (top-13);
\draw[bond] (top-13) -- (top-16);
\draw[bond] (top-16) -- (top-1);
\draw[bond] (top-13) -- (top-17);
\draw[bond] (top-17) -- (top-1);
\draw[bond] (top-2) -- (top-19);
\draw[bond] (top-19) -- (top-18);
\draw[bond] (top-2) -- (top-20);
\draw[bond] (top-20) -- (top-18);
\draw[bond] (top-18) -- (top-21);
\draw[bond] (top-21) -- (top-1);
\draw[bond] (top-18) -- (top-22);
\draw[bond] (top-22) -- (top-1);
\draw[bond] (top-0) -- (top-25);
\draw[bond] (top-25) -- (top-24);
\draw[bond] (top-0) -- (top-26);
\draw[bond] (top-26) -- (top-24);
\draw[bond] (top-24) -- (top-27);
\draw[bond] (top-27) -- (top-23);
\draw[bond] (top-24) -- (top-28);
\draw[bond] (top-28) -- (top-23);
\draw[bond] (top-0) -- (top-30);
\draw[bond] (top-30) -- (top-29);
\draw[bond] (top-0) -- (top-31);
\draw[bond] (top-31) -- (top-29);
\draw[bond] (top-29) -- (top-32);
\draw[bond] (top-32) -- (top-23);
\draw[bond] (top-29) -- (top-33);
\draw[bond] (top-33) -- (top-23);
\draw[bond] (top-23) -- (top-35);
\draw[bond] (top-35) -- (top-34);
\draw[bond] (top-23) -- (top-36);
\draw[bond] (top-36) -- (top-34);
\draw[bond] (top-34) -- (top-37);
\draw[bond] (top-37) -- (top-1);
\draw[bond] (top-34) -- (top-38);
\draw[bond] (top-38) -- (top-1);
\draw[bond] (top-23) -- (top-40);
\draw[bond] (top-40) -- (top-39);
\draw[bond] (top-23) -- (top-41);
\draw[bond] (top-41) -- (top-39);
\draw[bond] (top-39) -- (top-42);
\draw[bond] (top-42) -- (top-1);
\draw[bond] (top-39) -- (top-43);
\draw[bond] (top-43) -- (top-1);
\foreach \i in {0,...,43} \node[site] at (top-\i) {};
\node[anchor=east,font=\large] at (-0.160000,5.600000) {$a$};
\node[anchor=west,font=\large] at (12.160000,5.600000) {$z$};
\node[font=\Large] at (6.000000,8.200000) {$D_3^{2,2}$};
% D_1: 4 vertices, 4 edges
\coordinate (before-0) at (0.000000,0.000000);
\coordinate (before-1) at (4.100000,0.000000);
\coordinate (before-2) at (2.050000,1.150000);
\coordinate (before-3) at (2.050000,-1.150000);
\draw[bond] (before-0) -- (before-2);
\draw[bond] (before-2) -- (before-1);
\draw[bond] (before-0) -- (before-3);
\draw[bond] (before-3) -- (before-1);
\foreach \i in {0,...,3} \node[site] at (before-\i) {};
\node[anchor=east,font=\large] at (-0.160000,0.000000) {$a$};
\node[anchor=west,font=\large] at (4.260000,0.000000) {$z$};
\node[font=\Large] at (2.050000,1.800000) {$D_1^{2,2}$};
% D_2: 12 vertices, 16 edges
\coordinate (after-0) at (7.000000,0.000000);
\coordinate (after-1) at (12.000000,0.000000);
\coordinate (after-2) at (9.500000,1.150000);
\coordinate (after-3) at (8.250000,0.897000);
\coordinate (after-4) at (8.250000,0.253000);
\coordinate (after-5) at (10.750000,0.897000);
\coordinate (after-6) at (10.750000,0.253000);
\coordinate (after-7) at (9.500000,-1.150000);
\coordinate (after-8) at (8.250000,-0.253000);
\coordinate (after-9) at (8.250000,-0.897000);
\coordinate (after-10) at (10.750000,-0.253000);
\coordinate (after-11) at (10.750000,-0.897000);
\draw[bond] (after-0) -- (after-3);
\draw[bond] (after-3) -- (after-2);
\draw[bond] (after-0) -- (after-4);
\draw[bond] (after-4) -- (after-2);
\draw[bond] (after-2) -- (after-5);
\draw[bond] (after-5) -- (after-1);
\draw[bond] (after-2) -- (after-6);
\draw[bond] (after-6) -- (after-1);
\draw[bond] (after-0) -- (after-8);
\draw[bond] (after-8) -- (after-7);
\draw[bond] (after-0) -- (after-9);
\draw[bond] (after-9) -- (after-7);
\draw[bond] (after-7) -- (after-10);
\draw[bond] (after-10) -- (after-1);
\draw[bond] (after-7) -- (after-11);
\draw[bond] (after-11) -- (after-1);
\foreach \i in {0,...,11} \node[site] at (after-\i) {};
\node[anchor=east,font=\large] at (6.900000,0.000000) {$a$};
\node[anchor=west,font=\large] at (12.160000,0.000000) {$z$};
\node[font=\Large] at (9.500000,1.800000) {$D_2^{2,2}$};
\draw[-{Stealth[length=3mm]},line width=.8pt] (4.85,0) -- (6.30,0);
\end{tikzpicture}
\caption{The lattice $D_3^{2,2}$ on top and a step of the recursive construction at the bottom.}\label{fig:MKlattice}
\end{figure}

{Writing $D_n^{b,\,s}=(V_n,\,E_n)$, we have}
\[
|E_n|=(bs)^n\approx |V_n|
\]
with the distance between $a$ and $z$ growing as $s^n$. From the above one can derive the effective dimension $d_f=\log(bs)/\log(s)$. On the one hand, this construction reproduces the relation between volume and length associated with Euclidean lattices, making them comparable (for instance for $b=s=2$ and $\mathbb{R}^2$ as in~\cite{hambly2010diffusion}). On the other, it allows non-integer dimensions. Besides, these graphs differ from $\mathbb Z^d$: they lack translation invariance, have a hierarchical arrangement of loops, and possess degrees that grow with the age of a vertex. {Migdal--Kadanoff} lattices provide tractable settings for studying critical phenomena in statistical mechanics, from percolation to diffusive processes~\citep{alves2026percolation,hambly2010diffusion,Clark2022Continuum,Clark2021WeakDisorder,GiacominLacoinToninelli2010Marginal,GiacominLacoinToninelli2010Hierarchical}.

\section{Main results and proof ideas}
\subsection*{The avalanche size} The first result gives matching upper and lower bounds on the tail exponent of an avalanche of the ASM on $D_n^{b,s}=(V_n,\,E_n)$ {with sink set $\partial:=\{a,z\}$. We write $G_n:=D_n^{b,s}/\partial$ for the graph obtained by wiring this boundary into a single sink, also denoted by $\partial$, retaining parallel edges and deleting loops. The notation $D_n^{b,s}$ and $d_n$ always refers to the original, unwired graph.} We add a grain at a selected pivotal vertex $p_j$ on a fixed $a$-to-$z$ geodesic, chosen so that $d_n(a,p_j)=s^j$ for every $n>j$. We keep $j$ fixed as $n$ tends to infinity. The source consequently remains at a fixed distance from the sink (see Figure~\ref{fig:p1} for an example of a pivotal point). 
\begin{figure}[ht!]
\centering\includegraphics[scale=.4]{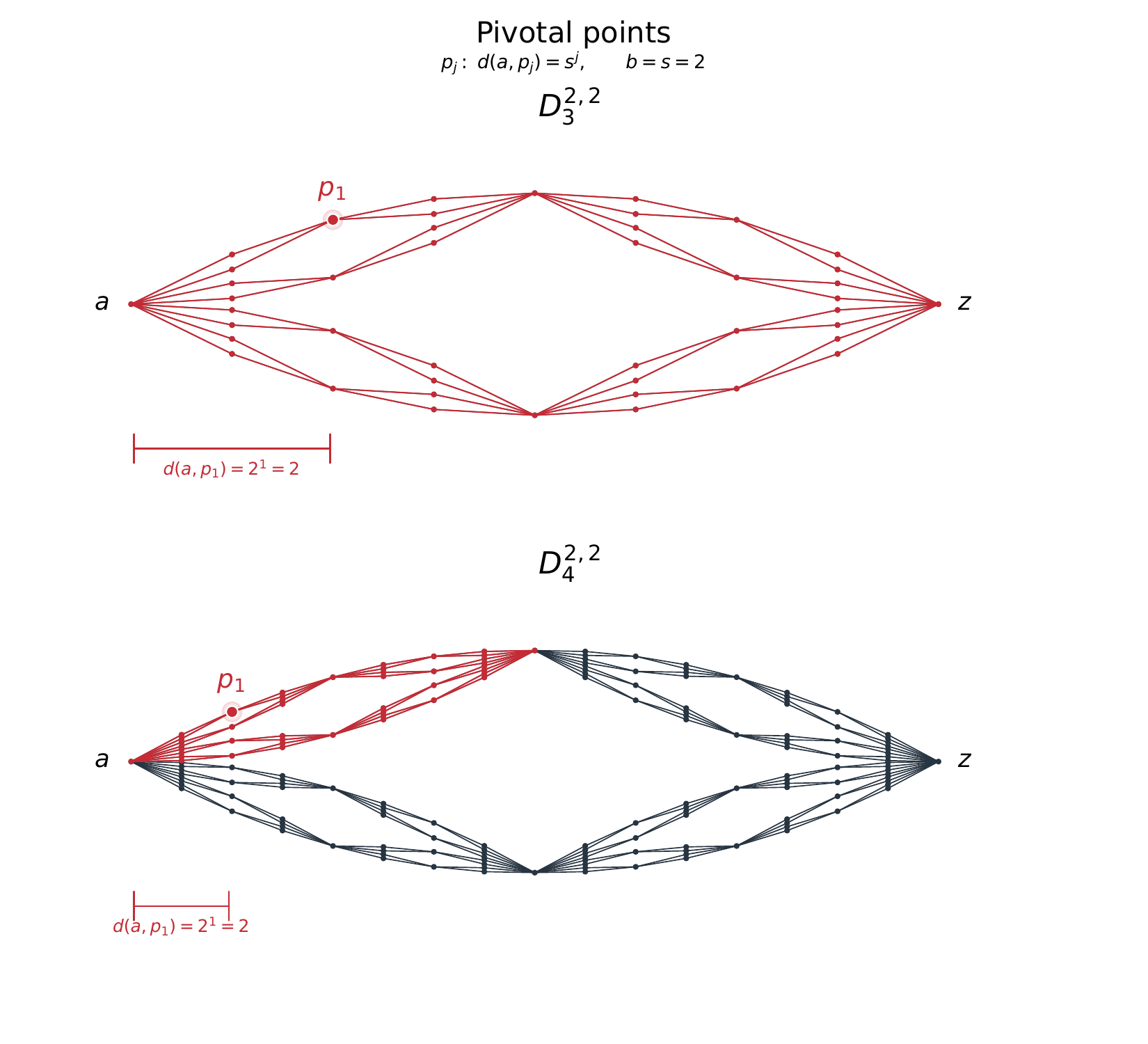}
    \caption{Pivotal point $p_1$ in $D_3^{2,2}$ and $D_4^{2,2}$.}
    \label{fig:p1}
\end{figure}
All estimates are established directly on the finite graphs, without assuming the existence of an infinite-volume sandpile model.

{Let $\nu_n:=\nu_{G_n}$ be the uniform recurrent sandpile measure on $G_n$. Equivalently, this is the ASM on $D_n^{b,s}$ with sink set $\partial$.} \begin{thm}[Power-law for the avalanche]\label{thm:powerlaw}

    Fix $b,\,s\in \{2,\,\ldots\}$ and $j\in \{1,\,2,\,\ldots\}$ to identify $p_j \in \bigcup_{n>j} D_n^{b,\,s}$ such that $d_n(a,\,p_j)=s^j$ for all $n\in \{j+1,\,j+2,\,\dots\}$. There exist two constants $\overline{K}>\underline{K}>0$ {that depend on $b,s$ and $j$ such that}, for all $t\in \left(s\left(\abs{V_j}-1\right),s\left(\abs{V_{n-3}}-1\right)\right]$ and for all $n\geq j+4$,
    \begin{equation*}
        \underline{K} \,t^{-\alpha}<\nu_n\left(\abs{{Av}^{p_j}}>t\right)< \overline{K} \,t^{-\alpha} ,
  \end{equation*}
      with $\alpha:=\log b/\log(bs)$.
\end{thm}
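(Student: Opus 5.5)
The plan is to use the hierarchical structure to turn the avalanche into a nested family of events: the avalanche escapes from one copy of $D_k$ to the next enclosing copy of $D_{k+1}$. The source $p_j$ is a vertex created at level $n-j$ of the construction, i.e.\ it is the junction of two consecutive copies of $D_j$ on the geodesic. Around $p_j$ there is a nested sequence of subgraphs $H_j\subset H_{j+1}\subset\dots\subset H_n=D_n^{b,s}$. Here $H_k$ is a copy of $D_k^{b,s}$ containing $p_j$, whose two endpoints $a_k,z_k$ are vertices of higher level, with $d(a_k,p_j)\le s^j$.

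The first step is to relate $|Av^{p_j}|$ to the largest $k$ such that the avalanche reaches a boundary vertex of $H_k$ other than $a$ or $z$. Call this the reach $R$. Because avalanches on a graph where every vertex of the interior of $H_k$ has toppled must contain a positive fraction of $H_k$, we expect $|Av^{p_j}|\asymp |V_R|\asymp (bs)^R$. The upper bound $|Av|\le |V_{R+1}|$ is trivial. For the lower bound we need a statement that a positive fraction of $H_R$ topples once both endpoints are reached, with uniformly positive probability. This follows from the Dhar burning/wave decomposition: the first wave from $p_j$ is the component of $p_j$ in the uniform spanning tree of $G_n\cup\{p_j\text{ wired}\}$, via the Majumdar--Dhar/Priezzhev correspondence.

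The core computation is then the tail of $R$: we aim to show $\nu_n(R\ge k)\asymp b^{-(k-j)}$ for $j\le k\le n-3$. With $t\asymp (bs)^k$ this gives $t^{-\alpha}=b^{-k}$, matching the stated exponent. In UST language, via waves, the event that the first wave leaves $H_k$ through its endpoint is the event that in the wired UST the path from an endpoint of $H_k$ to the sink passes through $p_j$. Equivalently, the loop-erased random walk from that endpoint hits $p_j$ before $\partial$. On $D_n^{b,s}$ the effective resistances are exactly computable: each level multiplies resistance by $s/b$, and random walk on $H_{k+1}$ started at a level-$k$ vertex chooses among $b$ parallel branches symmetrically. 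This gives a renormalisation recursion in which each additional level costs a factor comparable to $1/b$. The reason is that the walk must enter the one branch of $H_{k+1}$ among $b$ that contains $H_k$, with bounded-away-from-zero probability of then reaching $p_j$ through loop erasure. I would establish this by an exact recursion for harmonic measures, in the spirit of Wilson's algorithm with the root at $\partial$. Upper and lower bounds then follow from bounding the per-level ratio between constants $c_1/b$ and $c_2/b$, uniformly in $n\ge k+3$. The margin of three levels keeps the boundary effect from $a,z$ bounded.

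Finally I need to pass from the first wave to the full avalanche. The avalanche is the union of waves, each contained in the next's region. Hence the upper bound requires controlling that later waves do not reach much farther. I would bound the number of waves by $\deg(p_j)$, which is fixed given $j$ since the source's degree is $2b$ independent of $n$. Each wave is a UST-type component, so the same $b^{-k}$ tail applies to each, after a union bound using the Abelian property and the uniformity of the recurrent measure restricted to each wave. The lower bound only needs the first wave.

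I expect the main obstacle to be the lower bound $|Av|\gtrsim(bs)^R$, namely the volume of the wave given its reach. Reaching an endpoint of $H_k$ does not obviously force toppling of a constant fraction of $H_k$. My first attempt is a second-moment argument on the UST: conditioned on the branch to $p_j$, each vertex of $H_k$ joins the component of $p_j$ with probability bounded below, by the resistance recursion. Paley--Zygmund would then give a constant fraction with positive probability.
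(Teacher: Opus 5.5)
Your architecture (pivotal reach, $t\asymp(bs)^k$, a hitting probability of order $b^{j-k}$, the first wave for the lower bound) is the paper's. However, both passages between the sandpile and the spanning forest fail as stated.

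\textbf{The first wave is not a uniform-forest component.} Under $\nu_n$, the first wave is \emph{not} distributed as $T^{p_j}$ under $\mathbb{P}_n^{p_j}$. The Ivashkevich--Ktitarev--Priezzhev bijection matches $\mathrm{R}_{G_n^{p_j}}$ with \emph{all} pairs $(h,\ell)$ with $h_{p_j}=\deg_n(p_j)$ and $\ell\le N(p_j)$. So $\mathbb{P}_n^{p_j}$ is the law of a uniformly chosen wave. First waves arise only from the proper subset $E^{p_j}=\{h^w:(h^w,\deg_n(p_j))\in\mathrm{R}_{G_n}\}$, which has no spanning-forest description. The inequality $\nu_n(\abs{W_1^{p_j}}>t)\le\mathbb{E}_n[N(p_j)]\,\mathbb{P}_n^{p_j}(\abs{V(T^{p_j})}>t)$ is free. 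Your lower bound needs the reverse inequality, and that is the real content of Lemma~\ref{lm:UB by tree}. There, the lifting operator puts maximal height on the interior of a geodesic from $\partial$ to $p_j$. It maps $\mathrm{R}_{G_n^{p_j}}$ into $E^{p_j}$, can only enlarge the toppled set, and is at most $C<(2b^{j-1})^{s^j-1}$-to-one. Hence the reverse inequality holds with a factor $1/C$ independent of $n$.

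\textbf{The number of waves is not bounded by the degree.} Your upper bound relies on $N(p_j)\le\deg_n(p_j)$, which is false; note also $\deg_n(p_j)=2b^j$, not $2b$. Take the maximal configuration and source $p_1$. The odometer $u$ is a nonnegative integer function, zero on $\partial$, with $\Delta u\ge\delta_{p_1}$. It is therefore concave along each of the $2b$ paths of length $s$, through degree-two vertices, that meet at $p_1$. The condition $\Delta u(p_1)\ge1$ forces a drop of at least one from $p_1$ into one of these paths. Concavity then propagates a slope of at least one along that whole path, either to $a$ where $u=0$ or to its far endpoint where $u\ge0$. Hence $N(p_1)=u(p_1)\ge s$, which exceeds $\deg_n(p_1)=2b$ when $s>2b$.

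The remedy is not to count waves at all. Since $Av^{p_j}$ is connected, contains $p_j$ and avoids $\partial$, you have $\{\abs{Av^{p_j}}>t_k\}\subseteq\{N(p_j,p_{k+1})\ge1\}$ with $t_k=s(\abs{V_k}-1)$. Markov's inequality with Dhar's formula then gives $\nu_n(p_{k+1}\in Av^{p_j})\le\mathbb{E}_n[N(p_j,p_{k+1})]=\mathcal{G}_n(p_j,p_{k+1})/\deg_n(p_{k+1})<\frac{s^jb(s-1)}{bs-1}\,b^{-(k+1)}$. Equivalently, the bijection yields the exact identity $\sum_{\ell\ge1}\nu_n(p_{k+1}\in W^{p_j}_\ell)=\mathbb{E}_n[N(p_j)]\,\mathbb{P}_n^{p_j}(p_{k+1}\in T^{p_j})$. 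This is your union bound done in expectation, with no need to bound $N(p_j)$.

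\textbf{Per-level constants change the exponent.} Bounding the cost of each level between $c_1/b$ and $c_2/b$ is not enough. The constants compound to $(c_i/b)^{k-j}$, and with $t\asymp(bs)^k$ the two resulting bounds have different exponents $\log(b/c_i)/\log(bs)$. Matching exponents require the per-level factor to be exactly $1/b$, up to corrections whose product converges. The paper obtains the closed form $\mathbb{P}_n^{p_j}(p_k\in T^{p_j})=\mathcal{P}_n^{p_k}(\tau_{p_j}<\tau_\partial)=b^{j-k}\frac{1-(bs)^{k-n}}{1-(bs)^{j-n}}$ (Corollary~\ref{cor:absorb}). This comes from a telescoping recursion in $n$ for $\mathcal{G}_n(p_j)$ that reduces to gambler's ruin.

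\textbf{Your main obstacle is deterministic.} What you flag as the main obstacle needs no Paley--Zygmund argument. If $p_{k+2}\in T^{p_j}$, the forest path from $p_{k+2}$ to $p_j$ passes through $p_{k+1}$ and $p_k$. Every vertex in the $s-1$ copies of $D_{k+1}$ between $p_{k+1}$ and $p_{k+2}$, and in the $s-1$ copies of $D_k$ between $p_k$ and $p_{k+1}$, is separated from both $p_j$ and $\partial$ by two vertices of $T^{p_j}$. Hence every such vertex belongs to $T^{p_j}$. Thus $\mathbb{P}_n^{p_j}(\abs{V(T^{p_j})}>t_k)\ge\mathbb{P}_n^{p_j}(p_{k+2}\in T^{p_j})$, which costs only a factor $b^{-2}$.
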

The restriction of the values of $t$ deserves an explanation. At scales smaller that $|V_j|$ we do not expect to see a power-law behaviour. Indeed, by looking at the subgraph isomorphic to $D_j^{b,\,s}$ enclosed by $a$ and $p_j$, we expect the tree rooted at $p_j$ to have a size comparable to $|V_j|$ with substantial probability (we can even say more in Remark~\ref{rem:infinite_series_Q}). On the other hand, the threshold $s\left(\abs{V_{n-3}}-1\right)$ corresponds, asymptotically, to the volume fraction
\[
\frac{s\left(\abs{V_{n-3}}-1\right)}{|V_n|}\approx\frac{s}{(bs)^3}.
\]
This is the size at which our methods begin to detect the finiteness of the system. Finally, the requirement that $n\geq j+4$ is a technical constraint which does not affect our result qualitatively, since we want to look at very large graphs for fixed $j$.

We also prove matching bounds with the same exponent for a uniform wave with source $p_j$. Here a uniform wave is sampled by assigning equal weight to each wave occurrence across all recurrent configurations. {For a nonsink vertex $x$, write $G_n^x:=G_n/\{\partial,x\}$ and $\mathbb{P}_n^x:=\mathbb{P}_{G_n^x}$. Through the wave--forest correspondence recalled below, we also use $\mathbb{P}_n^x$ for the uniform-wave law at $x$.} \begin{thm}[{Power-law} for the uniform wave]\label{thm:powerlaw_wave}

    Fix $b,\,s\in \{2,\,\ldots\}$ and $j\in \{1,\,2,\,\ldots\}$ to identify $p_j \in \bigcup_{n>j} D_n^{b,\,s}$ such that $d_n(a,\,p_j)=s^j$ for all $n\in \{j+1,\,j+2,\ldots\}$. There exist two constants ${K}_u>{K}_\ell>0$ that depend on $b,s$ and $j$ such that, for all $t\in \left(s\left(\abs{V_j}-1\right),\,s\left(\abs{V_{n-3}}-1\right)\right]$,
    \begin{equation*}
        {K}_\ell \,t^{-\alpha}< {\mathbb{P}_n^{p_j}}\left(\abs{{W}^{p_j}}>t\right)< {K}_u\, t^{-\alpha} ,
  \end{equation*}
    {with $\alpha:=\log b/\log(bs)$ and ${\mathbb{P}_n^{p_j}}$ denoting the law} of a uniform wave started at $p_j$.
\end{thm}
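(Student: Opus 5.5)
The plan is to use the wave--forest correspondence. A uniform wave at $p_j$ on $G_n$ has the same law as the component containing $p_j$ of the uniform spanning forest of $G_n$ rooted at $\{\partial,p_j\}$. Equivalently, it is the component of $p_j$ in the UST of $G_n^{p_j}$ after the wired vertex is split back into $\partial$ and $p_j$. So $\mathbb{P}_n^{p_j}(|W^{p_j}|>t)$ is the probability that the tree hanging from $p_j$ in this two-rooted spanning forest has more than $t$ vertices.

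\textbf{Step 1: decompose the forest by scales.} The key structural input is the self-similarity of $D_n^{b,s}$. The vertex $p_j$ sits on the $a$-to-$z$ geodesic at distance $s^j$ from $a$. For each level $k$ with $j\le k\le n$, $p_j$ is a vertex (indeed an endpoint or subdivision vertex) of a copy of $D_k^{b,s}$ inside $D_n^{b,s}$. These copies form a nested sequence $\Gamma_j\subset\Gamma_{j+1}\subset\cdots\subset\Gamma_n=D_n^{b,s}$, and each $\Gamma_k$ has its own two boundary vertices. Using Wilson's algorithm (or the spatial Markov property of the UST along cut vertices), the forest restricted to each copy of $D_m^{b,s}$ hanging between two pivotal vertices behaves like a UST on $D_m$ with boundary conditions. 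Its restriction is described by a finite number of states: whether the two endpoints are connected inside the copy, and to which root each is attached. The state of a level-$(m+1)$ diamond is obtained from the states of its $bs$ sub-diamonds, and the transition probabilities are explicit rational functions. Since the effective resistance renormalises by the factor $s/b$, these probabilities converge to a fixed point.

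\textbf{Step 2: reduce the tail event to a geometric event.} The event $|W^{p_j}|>t$ with $t\approx s|V_k|$ should be comparable, up to constants, to the event $E_k$ that the $p_j$-tree ``escapes'' to scale $k$, i.e.\ contains a boundary vertex of $\Gamma_k$ or a macroscopic part of a level-$k$ copy.
\begin{itemize}
\item For the \emph{upper bound}, on $E_k^c$ the tree lies inside a union of boundedly many level-$k$ copies, so $|W|\le C|V_k|$.
\item For the \emph{lower bound}, on $E_k$ the tree contains, with probability bounded below by the fixed-point transition probabilities, a full subtree of a level-$(k-1)$ copy. That subtree has size at least $c|V_{k-1}|$.
\end{itemize}
The window $t\in(s(|V_j|-1),\,s(|V_{n-3}|-1)]$ is exactly where both comparisons remain valid. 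The boundary cases use the three outermost levels, which explains the $n-3$.

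\textbf{Step 3: estimate $\mathbb{P}(E_k)$.} I expect $\mathbb{P}(E_k)\asymp b^{-(k-j)}$. Heuristically, for $p_j$'s tree to reach the next scale, the path from $p_j$ must travel along the geodesic. At each level it must choose one of the $b$ parallel branches, and the competing connection to $\partial$ is cut with probability of order $1/b$ per level. I would prove this by a multiplicative recursion: $\mathbb{P}(E_{k+1}\mid E_k)\to 1/b$ geometrically fast, using the fixed-point convergence from Step 1, so the product is $\asymp b^{-(k-j)}$ uniformly in $n$. Writing $t\asymp (bs)^k$ then gives $b^{-k}=t^{-\log b/\log(bs)}=t^{-\alpha}$. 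Intermediate $t$ are handled by monotonicity, with constants absorbing the ratio $bs$.

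\textbf{Main obstacle.} The hardest step is proving that the conditional escape probabilities are uniformly bounded above and below, and close to $1/b$, for every level. This requires controlling the finite-state renormalisation Markov chain under the $\{\partial,p_j\}$ boundary condition, including non-symmetric conditioning near $p_j$ and the outermost three levels. I would first compute the one-step transition matrix via effective resistances and series-parallel laws. I would then show contraction to the fixed point, so that the deviations are summable and yield the constants $K_\ell,K_u$.
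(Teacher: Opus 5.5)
\textbf{Gap: the central estimate is not proved.} Your architecture matches the paper's. You sandwich $\{|W^{p_j}|>t\}$ between events saying that the $p_j$-tree reaches scale $k$, show these have probability $\asymp b^{-(k-j)}$ uniformly in $n$, and interpolate in $t$. However, Step 3 carries the whole content of the theorem, and you leave it as an expectation. The route you sketch for it is neither carried out nor correctly justified. You say the renormalisation transition probabilities ``converge to a fixed point'' because the effective resistance renormalises by $s/b$. But the endpoint resistance of $D_m$ is $(s/b)^m$, which diverges or vanishes unless $b=s$, so it cannot be what drives convergence. The three-terminal states of the copies containing the root $p_j$, and the levels near the top, are exactly what you would need to control, and you flag this as unresolved. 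So $\mathbb{P}(E_k)\asymp b^{-(k-j)}$, uniformly in $n$, is missing.

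\textbf{The missing idea: no renormalisation is needed.} Take $E_k:=\{p_k\in T^{p_j}\}$. Run Wilson's (or Aldous--Broder) algorithm with roots $\{p_j,\partial\}$ and the first walk started at $p_k$. This gives $\mathbb{P}_n^{p_j}(p_k\in T^{p_j})=\mathcal{P}^{p_k}_n(\tau_{p_j}<\tau_\partial)$, an explicit random-walk quantity. The paper evaluates it as follows.
\begin{itemize}
\item Write it as $\mathcal{G}_n(p_k,p_j)/\mathcal{G}_n(p_j)$.
\item Use reversibility, $\deg(x)\mathcal{G}_n(x,y)=\deg(y)\mathcal{G}_n(y,x)$, together with $\deg_n(p_i)=2b^i$.
\item Use gambler's ruin for the projected walk $d_n(a,X)$ on a chain, which yields a recursion in $n$ for $\mathcal{G}_n(p_j)$.
\end{itemize}
The result is the closed form $b^{j-k}\frac{1-(bs)^{k-n}}{1-(bs)^{j-n}}\in\bigl[\tfrac{bs-1}{bs}b^{j-k},\,b^{j-k}\bigr]$ for $j\le k\le n-1$. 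Your ``$1/b$ per level'' heuristic is then exact, via the strong Markov property at the cut vertex $p_k$.

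\textbf{Step 2 is deterministic.} Neither comparison needs a ``probability bounded below''.
\begin{itemize}
\item Upper bound: if $p_{k+1}\notin T^{p_j}$, the tree cannot leave the chain of $s$ copies of $D_k$ joining $a$ to $p_{k+1}$ that contains $p_j$, since its only exits are $a\in\partial$ and $p_{k+1}$. Hence $|V(T^{p_j})|\le s(|V_k|-1)=t_k$.
\item Lower bound: if $p_{k+2}\in T^{p_j}$, the tree path passes through the cut vertices $p_k$ and $p_{k+1}$. Every vertex of the series segments between $p_k$ and $p_{k+2}$ can reach a root only through these pivotal vertices, so all of them lie in $T^{p_j}$, giving more than $t_k$ vertices.
\end{itemize}
The lower bound at $t_k$ uses $p_{k+2}$, so it needs $k+2\le n-1$; this is where the cutoff $s(|V_{n-3}|-1)$ comes from.
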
 

\paragraph{Heuristics and proof idea} Avalanches can be decomposed into waves: the source topples once, the other vertices {stabilise} while the source is prevented from toppling again, and this procedure repeats if needed. The wave has the law of the component containing \(p_j\) in a uniform spanning forest with two components rooted at \(p_j\) and the sink~\citep{IKP}.  This makes random walk hitting probabilities available for estimating how far a wave reaches.

The key estimates concern the pivotal vertices. For $j\leq k<n$, the probability that a uniform wave reaches $p_k$ is comparable to $b^{j-k}$. A level-$k$ subgraph has volume of order $(bs)^k$. We are able to relate both quantities to the {wave size and, once this link is established,} the change of variable $t\asymp(bs)^k$ gives

\[
\nu_n\left(\abs{{Av}^{p_j}}>t\right)\approx b^{-k}=\left((bs)^k\right)^{-\frac{\log b}{\log(bs)}}.
\]
For fixed $j$, the factor $b^j$ is absorbed into the comparison constants. This gives a heuristic argument for the exponent.  
For uniform waves, the lower bound will be proved in Section~\ref{sec:LB_uniform}. To pass to the {first-wave}, we need Lemma~\ref{lm:UB by tree}, which shows that the {first-wave} measure is a perturbation of the uniform wave measure. The strength of this perturbation can be controlled quantitatively: indeed, it can be computed using the estimates of Subsection~\ref{subsec:Green} on the Green's function of the simple random walk on the graph at the pivotal points. We are able to calculate its value exactly, showing that it is suitable for use in a first-moment argument. Finally, the avalanche tail bounds are obtained, on the lower side, as a consequence of the first-wave bound (since the first wave is obviously contained in the avalanche); on the upper side, by using our knowledge of the Green's function, that is, the estimate on the expected number of topplings at $p_j$. 
\subsection*{Pivotal reach and a second boundary setup} In this paper we also address {the limiting behaviour of two ASM observables}. The first is the ``pivotal reach'', that is, the farthest pivotal point from the source that the avalanche reaches. 
In Section~\ref{sec:R'} we show that the pivotal reach admits an almost surely finite weak limit as the graph size tends to infinity. This is {characterised} in the following result.
\begin{prop}\label{prop:R_intro_res}
    Let 
    \begin{equation*}
    R_n:=\max\left(\left\{k:j\leq k<n,\,p_k\in Av^{p_j}\right\}\cup \{0\}\right).
\end{equation*}
Then, at least along a subsequence, there exists an almost surely finite and {nonconstant} random variable $R$ such that $\lim_{n\to \infty}R_n=R$ in law.
\end{prop}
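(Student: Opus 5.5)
Since $R_n$ takes values in the nonnegative integers, the plan is to prove tightness, extract a convergent subsequence by Prokhorov (equivalently, a diagonal argument on the countably many values $\nu_n(R_n=k)$), and then show the limit is not a point mass. Tightness and nondegeneracy together give the proposition, with the subsequence left unspecified; this is why the statement only claims convergence along a subsequence.

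\textbf{Tightness.} For $k\ge j$, the event $\{R_n\ge k\}$ is contained in $\{p_k\in Av^{p_j}\}$ up to the finitely many indices between $j$ and $k$. Any pivotal vertex with index at least $k$ that is reached forces the avalanche to contain a level-$k$ subgraph worth of vertices, of order $(bs)^k$; this is the link between reach and size already used in the heuristics. I would bound $\nu_n(R_n\ge k)$ uniformly in $n\ge k+4$ in one of two ways. The first is via Theorem~\ref{thm:powerlaw}: if $p_k\in Av^{p_j}$ then $|Av^{p_j}|\ge c\, s^k$ at least, since the toppled set is connected and contains a path from $p_j$ to $p_k$ of length of order $s^k$. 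Combined with the upper bound $\overline K t^{-\alpha}$, this gives $\nu_n(R_n\ge k)\le C s^{-k\alpha}$, which tends to $0$ in $k$ uniformly in $n$. The second, more directly, is to combine the estimate that a uniform wave reaches $p_k$ with probability comparable to $b^{j-k}$ with the expected number of waves, which equals the expected number of topplings at $p_j$ and is controlled by the Green's function $G_n(p_j,p_j)$ of Subsection~\ref{subsec:Green}. The wave--forest perturbation Lemma~\ref{lm:UB by tree} transfers the uniform-wave estimate to each wave by a first-moment argument. The resulting bound $\nu_n(R_n\ge k)\le C b^{-k}$ is uniform in $n$, and tightness follows. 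In the first route I must check that $c s^k$ lies in the admissible window $(s(|V_j|-1),\,s(|V_{n-3}|-1)]$; this holds for large $k$ and $n\ge k+4$, and otherwise I would use the second route.

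\textbf{Nondegeneracy.} It suffices to show that $\liminf_n\nu_n(R_n=0)$, or the corresponding quantity for $R_n\le j$, lies strictly between $0$ and $1$; equivalently, that both $\{R_n\ge j+1\}$ and its complement have probability bounded below uniformly in $n$. For the lower bound on reaching, the first wave is contained in the avalanche. The lower bound of Theorem~\ref{thm:powerlaw_wave}, together with its first-wave version from Section~\ref{sec:LB_uniform}, gives $\nu_n(p_{j+1}\in Av^{p_j})\ge c\,b^{-1}>0$. For the complementary event, I would exhibit a local event of positive probability independent of $n$. For instance, consider the event that in the uniform spanning forest of $G_n^{p_j}$ the edges at $p_j$ all point towards the sink side $a$ within the level-$j$ block, so that the wave tree is confined to the copy of $D_j$ between $a$ and $p_j$. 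The probability of this event converges, by transfer current or Wilson's algorithm with hitting probabilities bounded uniformly in $n$ via the Green's function estimates. The avalanche could still continue through later waves, but $\nu_n(\text{only one wave})$ is bounded below: it is at least the probability that $p_j$ has height exactly its degree before addition and does not refill. Alternatively, the tightness bound alone shows that $\nu_n(R_n\ge k)<1/2$ for large $k$, so the law is not concentrated at any large value, and the lower bound above rules out concentration on $\{R\le j\}$. This alternative is cleaner: the limit puts mass at least $c>0$ on $\{R\ge j+1\}$ and, by the uniform bound $C b^{-k}$, mass less than $1$ on that set for large enough thresholds, hence it is not constant provided the mass is not all at a single value $\ge j+1$. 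To exclude that last case I would use the lower bounds at two consecutive levels $k$ and $k+1$.

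\textbf{Main obstacle.} The delicate step is the uniform-in-$n$ control. I must check that the Green's function at $p_j$ and the wave reach probabilities converge or stay bounded as $n\to\infty$ with $j$ fixed. This is precisely why only subsequential convergence is claimed: $G_n(p_j,p_j)$ need not converge but is bounded above and below. I would rely on Subsection~\ref{subsec:Green} for that bound.
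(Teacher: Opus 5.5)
Your proposal is correct and matches the paper's proof. Your second tightness route is exactly the paper's argument: the expected number of waves reaching $p_k$ equals $\mathbb{E}_n[N(p_j,p_k)]$, by the wave--forest bijection rather than by summing the per-wave bound~\eqref{eq:UB_l_wave} over all waves, which would diverge. Markov's inequality and Corollary~\ref{cor:E[N]} then give $\nu_n(R_n\ge k)\le C b^{-k}$ uniformly in $n$, and nondegeneracy comes, as in the paper, from the first-wave lower bound $\nu_n(R_n\ge\ell)\ge\nu_n(p_\ell\in W_1^{p_j})\ge c\,b^{-\ell}$ at every level $\ell$.

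Positivity of $\nu(R\ge\ell)$ for all $\ell$, together with almost sure finiteness, already rules out every constant, so your local-event detour is unnecessary. Also, the subsequence is not needed because $\mathcal{G}_n(p_j)$ might fail to converge (it converges by Proposition~\ref{prop:G_pivotal}); it is needed because only bounds on $R_n$, not its law, are available for the multi-wave avalanche.
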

We also provide an additional result of the same {flavour} for the pivotal reach of a uniform wave in Lemma~\ref{lemma:limit_L}.

Our second main result concerns a different choice of boundary conditions. {We now take $a$ as the sole sink on the unwired graph $D_n^{b,s}$ and consider a uniform wave with source $z$, whose degree is $b^n$. Wiring this source to the sink gives $G_n=D_n^{b,s}/\partial$; the endpoint-wave forest is therefore sampled under $\mathbb{P}_{G_n}$.} Let $\mathrm{C}_n$ denote the number of vertices in this wave. Its expected size is asymptotically
half the graph volume, and its {normalisation} by the mean converges in distribution:
\begin{thm}\label{thm:limit Dbs}
    {For all $t\in \mathbb{R}$, define}
    \begin{equation*}
        f(t)\coloneq \sum_{i=1}^{\infty}b^i\log\left(\dfrac{1}{s}\sum_{k=0}^{s-1}\left(e^{2t(bs)^{-i}}\right)^k\right).
    \end{equation*}
     Let $$Q_n=\frac{\mathrm{C}_n}{\mathrm{E}[\mathrm{C}_n]}.$$
    Then $f(t)$ converges for all $t\in \mathbb{R}$ and it holds that
    \begin{equation*}
        \lim_{n\to\infty}\mathrm{E}\left[e^{tQ_n}\right]=e^{f(t)} \quad \forall \,t \in \mathbb{R}.
    \end{equation*}
    In particular, $Q_n$ converges in distribution to a nondegenerate random variable $Q$ whose law is uniquely determined by $e^{f(\cdot)}$ as {its moment-generating function}.
\end{thm}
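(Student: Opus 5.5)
The plan is to exploit the exact recursive structure of the uniform spanning forest on $D_n^{b,s}$ and to derive a distributional recursion for $\mathrm{C}_n$. Through the wave--forest correspondence, the wave at $z$ with sink $a$ is the component of $z$ in a uniform two-component spanning forest of $D_n^{b,s}$ with $a$ and $z$ in different components. Equivalently, it is the UST of $G_n=D_n^{b,s}/\partial$ cut at the wired vertex. Write $D_n^{b,s}$ as $b$ parallel branches, each a series of $s$ copies of $D_{n-1}^{b,s}$. Restricted to each copy, such a forest is either a spanning tree of the copy or a two-component forest separating the copy's endpoints.

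\textbf{Step 1: structure of a single branch.} Within one branch, not all $s$ copies can be trees, since otherwise $a$ and $z$ would be connected. Nor can two copies be two-forests, since then a middle piece would form a third component. Hence in every branch exactly one copy is cut. All $s$ positions have the same weight $T_{n-1}^{s-1}F_{n-1}$, where $T$ counts spanning trees and $F$ counts two-forests. Given the positions, the $b$ branches and the internal forests of the copies are independent and uniform.

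\textbf{Step 2: distributional recursion.} Let $K_i$ be the number of full copies on the $z$-side of the cut copy in branch $i$. Then the $K_i$ are i.i.d.\ uniform on $\{0,\dots,s-1\}$. By the $a\leftrightarrow z$ symmetry of $D_{n-1}^{b,s}$, the $z$-side part of the cut copy is distributed as an independent copy of $\mathrm{C}_{n-1}$. This gives
\begin{equation*}
\mathrm{C}_n=\sum_{i=1}^{b}\Big(K_i\,(\abs{V_{n-1}}-1)+\mathrm{C}^{(i)}_{n-1}\Big)+O(b),
\end{equation*}
where the $O(b)$ term is a deterministic bounded bookkeeping correction for shared endpoints.

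\textbf{Step 3: unrolling the recursion.} Iterating, at depth $i$ there are exactly $b^i$ independent cut copies of level $n-i$. Each contributes an independent uniform $K\cdot(\abs{V_{n-i}}-1)$. The accumulated corrections total $O(b^n)=o((bs)^n)$. Since $\abs{V_m}\sim c(bs)^m$, taking expectations in the recursion yields $\mathrm{E}[\mathrm{C}_n]\sim \tfrac12\abs{V_n}\sim \tfrac c2 (bs)^n$. The factor $\tfrac12$ arises because the mean of $K$ is $(s-1)/2$ and summing the geometric series gives $\sum_i b^i(s-1)(bs)^{-i}/2\cdot(bs)^n\approx \abs{V_n}/2$. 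Thus, up to a uniformly negligible error,
\begin{equation*}
Q_n=\sum_{i=1}^{n}\sum_{\ell=1}^{b^{i}}2K_{i,\ell}\,(bs)^{-i}\,(1+o(1))+o(1),
\end{equation*}
with all $K_{i,\ell}$ independent and uniform on $\{0,\dots,s-1\}$. Independence then factorises the moment-generating function:
\begin{equation*}
\mathrm{E}[e^{tQ_n}]\approx\prod_{i=1}^n\Big(\tfrac1s\textstyle\sum_{k=0}^{s-1}e^{2tk(bs)^{-i}}\Big)^{b^i},
\end{equation*}
whose logarithm is the $n$-th partial sum of $f(t)$.

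\textbf{Step 4: convergence of $f$ and passage to the limit.} The $i$-th term of $f$ is $b^i\big(t(s-1)(bs)^{-i}+O(t^2(bs)^{-2i})\big)=O(s^{-i})$, so the series converges absolutely and locally uniformly in $t$. Passing to the limit requires controlling the $(1+o(1))$ and $o(1)$ errors inside the exponential. Here I would use that $Q_n$ is bounded by a constant independent of $n$, since $\mathrm{C}_n\le\abs{V_n}$. Consequently the errors are deterministic and $o(1)$, and $e^{tQ_n}$ changes by a factor tending to $1$ for each fixed $t$. This gives $\mathrm{E}[e^{tQ_n}]\to e^{f(t)}$ for all $t\in\mathbb{R}$. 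Curtiss' theorem then yields convergence in distribution to a $Q$ with moment-generating function $e^{f}$. Nondegeneracy follows because $f''(0)=\sum_i b^i\,4(bs)^{-2i}\Var(K)>0$, so $Q$ has positive variance.

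\textbf{Main obstacle.} I expect the main difficulty to be the bookkeeping in Steps 2--3. One must verify that the vertex counts of the pieces (endpoints shared between consecutive copies, the inclusion of $z$, and the exact $\abs{V_{n-i}}$ rather than $c(bs)^{n-i}$) produce an error that is deterministically $o(\mathrm{E}[\mathrm{C}_n])$. One must also check that the symmetry in Step 2 really gives the law of $\mathrm{C}_{n-1}$ with the correct endpoint. I would handle this by writing the exact recursion for both $\mathrm{C}_n$ and $\mathrm{E}[\mathrm{C}_n]$ and comparing the two term by term.
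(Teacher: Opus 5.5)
Your proposal is correct and follows the paper's proof in outline: an exact representation $\mathrm{C}_n=\sum_{i=1}^n c_{n-i}\sum_{\ell=1}^{b^i}K_{i,\ell}$ with independent uniforms on $\{0,\dots,s-1\}$ (Proposition~\ref{prop:deco Dbs}), a factorised moment-generating function, and a termwise limit. Two sub-steps are argued differently.

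\textbf{Uniform junction count.} The paper derives the uniform law of the number of junction vertices in $T^z_n$ from the groundskeeper algorithm and gambler's ruin (Lemma~\ref{lm:p in C}). Your observation that each branch contains exactly one cut copy, with all $s$ positions of weight $T_{n-1}^{s-1}F_{n-1}$, is more elementary. It also gives directly the independence between the cut positions and the forests inside the copies, which the paper uses implicitly in \eqref{eq:prior}.

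\textbf{Passage to the limit.} The paper keeps the exact weights $x_{n,i}=c_{n-i}/\mu_n$ and applies dominated convergence to the series of logarithms. You instead compare pathwise with $\widetilde Q_n:=\sum_{i\le n}2(bs)^{-i}\sum_{\ell}K_{i,\ell}$, whose log-moment-generating function is exactly the $n$-th partial sum of $f$. Both work.

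The bookkeeping you flag as the main obstacle is harmless, because the recursion is exact: $\mathrm{C}_n=\sum_{i=1}^b\bigl(K_ic_{n-1}+\mathrm{C}^{(i)}_{n-1}\bigr)$ with $\mathrm{C}_0=0$. The junction between the cut copy and the full copies is counted once, on the full-copy side, and $z$ is excluded throughout, so there is no $O(b)$ correction. The only discrepancy is the constant $c_m-\frac{b(s-1)}{bs-1}(bs)^m=\frac{b-1}{bs-1}$. Together with $\mu_n=\frac{b(s-1)}{2(bs-1)}\bigl((bs)^n-1\bigr)$, this gives $|Q_n-\widetilde Q_n|\le Cs^{-n}$ deterministically.

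Read your factor $(1+o(1))$ as a single global factor, not as a termwise one: termwise it fails, since $x_{n,n}/\bigl(2(bs)^{-n}\bigr)\to 1+\frac{b-1}{b(s-1)}$.
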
  
\paragraph{Proof idea}
{For the result on $R_n$, we} exploit the recursive structure of the hierarchical lattice and the estimates for the simple random walk. The bounds obtained for the avalanche size are also suitable for controlling the growth of $R_n$, whose law follows from a recursive distributional decomposition of the forest component. This
endpoint regime complements the tail estimates near a sink: it describes waves whose
typical size is of the same order as the entire graph. 
As for $Q_n$, we find a sequence of {\em junction vertices} that control the size of the wave started at $z$. Basically, they are ``bottlenecks'' that the wave has to cross in order to grow in size: knowing they are reached allows one to track the radius of the wave in terms of spanning trees, which are more tractable.

\subsection*{{Structure of the paper}} In {Section~\ref{sec:prelim}, we set} up the notation and recall the basic facts we will use about the random processes we need. Theorem~\ref{thm:powerlaw_wave} {is proved in two sections}: Section~\ref{sec:UB_uniform} proves the upper bound, while the lower bound is shown in Section~\ref{sec:LB_uniform}. The avalanche {power-law} is shown in Section~\ref{sec:proofAv}. Section~\ref{sec:R'} is dedicated to the proof of Proposition~\ref{prop:R_intro_res}, and {the final section, Section~\ref{sec:Cb}, deals} with the limit law of the size of the uniform wave at $z$.
\subsection*{{Use of generative AI}} With the exception of Lemma~\ref{lemma:limit_L} and {Remark~\ref{rem:infinite_series_Q}, which were suggested by ChatGPT-6 Astra during the final review stage, all ideas and proofs in this paper are formulated and developed by the authors without {any use of generative AI}. {Elsewhere, ChatGPT-6 was used for proofreading, converting the authors' hand-drawn figures into portable format and improving the flow of the introduction.} The authors take full responsibility
for the mathematical content and presentation of the paper.
\section{{Notation and preliminaries}}\label{sec:prelim}
\subsection{Notation}
We use the notation $\mathbb{N}\coloneq\{0,\,1,\,2\,\ldots\}$ and $\mathbb{N}_+\coloneq\{1,\,2\,\ldots\}$. 

{For two real numbers $x,\,y$, we denote their maximum and minimum by $x\vee y$ and $x\wedge y$, respectively.}

{Sums, products and unions or intersections over an empty index set are defined to be 0, 1 and $\emptyset$, respectively.}
The natural logarithm is denoted by $\log$, and we will make it explicit if we are using another base.

We work with finite, connected, undirected multigraphs without self-loops. Parallel edges are retained as distinct edges. We write $G=(V,E)$, use $V(G)$ and $E(G)$ when needed, and denote by $m_G(u,v)$ the number of edges between $u$ and $v$.

For a nonempty set $S\subseteq V(G)$, write $G/S$ for the graph obtained by identifying all vertices of $S$ as a single vertex $\partial$. Every original edge with at least one endpoint outside $S$ is retained, with its endpoints mapped to the quotient; edges with both endpoints in $S$ are deleted. Thus parallel edges and their multiplicities are preserved, and
\[
 V(G/S)=(V(G)\setminus S)\cup\{\partial\},\qquad
 \deg_{G/S}(v)=\deg_G(v)\quad(v\notin S).
\]
We use $\partial$ for the sink vertex throughout. When a graph already has sink $\partial$, we write
\[
 G^x:=G/\{\partial,x\}
\]
for the graph in which the nonsink vertex $x$ is also wired to the sink.

{We denote by $X$ the random walk on $G=(V,E)$ that chooses an incident edge uniformly at each step, so its transition probability from $u$ to $v$ is $m_G(u,v)/\deg_G(u)$} and display its starting point $X_0=x$ on the associated probability measure $\PRW^x$. We denote by $\tau_x$ {the first hitting time in $x$ of $X$}, that is,
\begin{equation*}    \tau_x\coloneq{\min}\{k\geq 0: \,X_k=x\}.
\end{equation*}
{For a nonempty vertex set $S$, let $\tau_S:=\min\{k\geq0:X_k\in S\}$.}
{For every $x,y \in V$, we} {denote by} $d_{G}(x,y)$ {the graph distance between the two vertices} $x$ and $y$, defined as {the minimum number of edges in a path linking the two vertices in $G$}. Denote by $\sim$ {the binary relation on $V$} of being at distance one with respect to $d_G$. We will {use the shorthand} $d_n(\cdot,\cdot)$ to denote the graph distance on a sequence of graphs $D_n^{b,\,s}$ defined in Section~\ref{subsec:hilattice}. 

If $f(\cdot,\,\cdot)$ is a function of {two variables, we use the shorthand} $f(x):=f(x,x)$. We use $\log$ to denote the natural logarithm.

Given two sequences $(a_n)_{n\in \mathbb{N}}$ and $(b_n)_{n\in \mathbb{N}}$ of real numbers, {we write $a_n \asymp b_n$} if
\begin{equation*}
    \lim_{n\xrightarrow {}\infty }\dfrac{a_n}{b_n}\in \mathbb{R}\backslash \{0\}.
\end{equation*}
The notation $a_n=o_n(b_n)$ is used when { $\lim_{n\to\infty}a_n/b_n=0$.}
%Given two probability measures $\bbP_1$ and $\bbP_2$ we denote by $\bbP_1 \otimes \bbP_2$ the corresponding product measure. 

\subsection{Spanning trees}\label{sec:SpanningT}
Here we recall some definitions concerning spanning trees and {fix notation for Section}~\ref{subsec:ASM}.

Given a connected finite graph  $G=(V,E)$ and a vertex $x\in V$, a  \emph{rooted spanning tree} is an acyclic and connected subgraph of $G$ whose vertex set equals $V$ and with a ``special'' vertex $x$ marked as the root of the tree. Denote {the set of such spanning trees by} $\Omega_{G}$, omitting the choice of {the root, which in our case will always coincide with the sink} of the ASM. Equip $\Omega_G$  with the uniform probability measure $\bbP_G$.

Similarly, a \emph{two-component spanning forest} rooted at distinct vertices $x,y\in V$ is a pair $(T^x,T^y)$ of disjoint trees whose vertex sets partition $V$, with $x\in T^x$ and $y\in T^y$. Identifying the two roots maps such a forest bijectively to a spanning tree of $G/\{x,y\}$. Edges retain their original identities, so the two components are recovered by separating the roots again. Accordingly, under $\mathbb{P}_{G/\{x,y\}}$, the notation $T^x$ denotes the component rooted at $x$ in this forest on the original graph. For a graph with sink $\partial$, the same convention applies under $\mathbb{P}_{G^x}$ to the forest $(T^x,T^{\partial})$ on $G$.

%Commnad-shift-7 to comment everyhting out

%

\subsubsection{The groundskeeper algorithm} Fix $x\in V $ and consider the following sampling procedure in $\Omega_G$, producing a uniform spanning tree $T$. We start with $T=\emptyset$. Choose a vertex $v\in V$ and start a simple random walk on $G$ with $X_0=v$. The first time that a vertex is visited through an edge $e$, we add $e$ to the edges of the spanning tree. Once $G$ is covered, the resulting set of edges forms a spanning tree sampled uniformly at random, independently of the starting position of the walk. The procedure just described is usually referred to as the groundskeeper, or Aldous--Broder, algorithm~\citep{aldous1990random,broder1989generating}. Other important sampling algorithms exist (for instance~\cite{wilson1996generating}).

Precisely, our main application of the groundskeeper algorithm will be the following one:
\begin{prop}\label{prop:GK}
{Let $x,y,w\in V$ be three distinct vertices in $G$. Then}
    \begin{equation*}      {\mathbb{P}_{G/\{x,y\}}}\left(w\in T^x\right)={\mathcal{P}^w_G\left(\tau_x<\tau_y\right).}
    \end{equation*}
\end{prop}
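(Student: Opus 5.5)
We prove the identity by running the groundskeeper algorithm on the wired graph $H:=G/\{x,y\}$, starting the walk at the root $\partial$ that results from identifying $x$ and $y$. Under the bijection between spanning trees of $H$ and two-component spanning forests of $G$ rooted at $x$ and $y$, a vertex $w$ lies in $T^x$ exactly when the unique path in the tree from $w$ to $\partial$ ends with an edge that, in $G$, is incident to $x$. Edges keep their identities, so this is well defined. The task is to identify the law of this last edge with the exit law of the walk on $G$ started at $w$.

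The cleanest route, which we follow, is Wilson's algorithm~\citep{wilson1996generating}. It is equivalent for sampling purposes and lets us start the first branch at $w$. Run a simple random walk on $H$ from $w$ until it first hits $\partial$, and loop-erase it; the result is the path from $w$ to the root in the uniform spanning tree of $H$. Loop erasure never removes the final step, since $\partial$ is visited only at the stopping time. The final edge of the tree path is therefore the edge through which the walk first enters $\partial$. Hence $w\in T^x$ exactly when the walk on $H$ first enters $\partial$ through an edge that was originally incident to $x$.

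It remains to couple the walk on $H$ from $w$ with the walk on $G$ from $w$, run up to time $\tau_{\{x,y\}}$. At every vertex $v\notin\{x,y\}$ both graphs have the same incident edges and the same degree, since $\deg_{G/S}(v)=\deg_G(v)$. Transitions therefore agree edge by edge before the walks reach the wired set. The walk on $H$ enters $\partial$ through an $x$-edge precisely when the walk on $G$ satisfies $\tau_x<\tau_y$. This gives
\[
\mathbb{P}_{G/\{x,y\}}(w\in T^x)=\mathcal{P}^w_G(\tau_x<\tau_y).
\]
If we insist on the Aldous--Broder formulation as stated in the paper, we instead use the reversed walk: the tree obtained from the last-exit edges is uniform, and reversibility of the simple random walk turns this into the first-entrance description above. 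One then argues in the same way.

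The only delicate point is the bookkeeping of parallel edges and deleted loops under the quotient. Edges between $x$ and $y$ are deleted, but they are never traversed before $\tau_{\{x,y\}}$, so they do not affect the argument. We also need to make sure that the identification of $T^x$ via the last edge matches the forest convention fixed in Section~\ref{sec:SpanningT}. Neither issue is a genuine obstacle.
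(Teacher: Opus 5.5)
Your proof is correct and takes essentially the paper's route. The paper only says the identity follows from the groundskeeper algorithm ``or an equivalent algorithm'', and your Wilson-algorithm argument makes that explicit: it identifies the last edge of the tree path from $w$ to $\partial$ with the edge through which the walk from $w$ first enters $\{x,y\}$. The time-reversal detour for Aldous--Broder is unnecessary. If you start the groundskeeper walk at $w$, every ancestor of $\partial$ in the first-entrance tree is visited before $\tau_\partial$, so the first-entrance edge of $\partial$ is automatically the last edge of the tree path from $w$ to $\partial$.
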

\begin{proof}
    {This follows from the GA or an equivalent algorithm (see, for example,}~\citep[Eq. (2.7)]{bourabee2026sandpileavalanchesdimensionsfinite}).
\end{proof}

\subsection{The Abelian sandpile}\label{subsec:ASM}
{Given a graph $G=(V\cup\{\partial\},E)$ as above, with nonsink vertex set $V$ and sink $\partial$, the Abelian sandpile model is defined as follows} \citep{DHAR1994}.
At each site $v\in V$, we define a \emph{height} variable $h_v\in\mathbb{N}_+$ called the
height of the sandpile, and a threshold value $\eta_v$. If $h_v\leq \eta_v$, for all $v$, the pile is said to be
\emph{stable}. Fix a strictly positive probability measure on $V$. {The time evolution} of the sandpile is defined by the following two rules:
\begin{enumerate}
    \item  {elect one} of the sites according to the probability measure, and add a grain of sand there. On addition of the grain at site $v$, $h_v$ increases by 1. Heights at other sites remain unchanged.
    \item If for a site $v$ one has $h_v> \eta_v$, then $v$ is said to be \emph{unstable}, so it \emph{topples},
and loses some {sand grains} to other sites. This sand grain transfer is defined in terms of a $|V|\times |V|$ integer matrix $\Delta$, by means of which we set $\eta_v\coloneqq \Delta(v,\,v)$. On toppling at site $v$, the configuration is updated according
to the rule
\[
h_{u}\mapsto h_u-\Delta(v,\,u).
\]
If the toppling results in some other site becoming unstable, it is also toppled. Particles
arriving at the sink are lost, that is, we do not keep track of them. The process continues until all sites become stable.
\end{enumerate}

{For $v\in V$, denote} by $a_{v,G}$ the function that takes a stable configuration $h$ and returns the {stabilisation} of $h+\delta_v$.

A common choice for $\Delta$, {which we also adopt}, is the matrix obtained from the {unnormalised} graph Laplacian of $G$ by removing the column and row corresponding to the sink. {Recall that $m_G(u,v)$ is the number of edges between $u$ and $v$.} We define, for $v,\,u\in V$, the Laplacian as
\begin{equation}\label{eq:delta}
\Delta(v,\,u)=\begin{cases}
    \deg_G(v) &\text{if }v=u\\
    {-m_G(u,v)}  & \text{if }\overline{vu}\in E\\
    0 &\text{otherwise}
\end{cases}.
\end{equation}
Note that, since $G$ is connected, it can be shown that the matrix $\Delta$ is invertible. 

It is easy to see that the dynamics of the ASM can be defined as a discrete-time Markov chain on the space of stable configurations. {Eventually, the chain} reaches the unique recurrent class $\mathrm{R}_G$~\citep[Theorem $2.9$]{JaraiSurvey}. {Therefore, studying the properties of the chain in the long run means studying}~$\mathrm{R}_G$, which we equip with the stationary measure $\nu_G$ of the chain over it. This measure is simply the uniform distribution on recurrent configurations~\citep[Exercise 2.16]{JaraiSurvey}. For the many properties of the ASM, including the fact that the toppling order {does not affect the final stable configuration}, we refer the reader to~\cite{JaraiSurvey,redig2006mathematical}, for instance. Here we will recall only the important connections with uniform spanning trees.

The~\citeauthor{majumdar1992equivalence}, or burning, bijection proves that there is a one-to-one correspondence between spanning trees of a graph and $\mathrm{R}_G$. The burning bijection yields an algorithm to check whether a height function belongs to $\mathrm{R}_G$. In particular, a configuration is recurrent if and only if it is burnable, and {the order in which sites burn uniquely reconstructs a spanning tree} (up to a choice of the burning order of the edges). {In turn, this and the matrix-tree theorem} {(see~\citep{biggs}) already yield the important consequence} that $\abs{\mathrm{R}_G}=\det\Delta$. 
There is also an alternative condition to determine whether a certain $h$ is recurrent or not. Given a height configuration $h$, a forbidden sub-configuration (FSC) is defined as a set $F\subset V$ that does not burn in the burning {procedure; in other words, a configuration} for which
\begin{equation*}
    h_{v}\leq \abs{\{u\in F:v \sim u \}}, \, \forall v \in F.
\end{equation*}
Then $h$ is a recurrent configuration if and only if it does not contain {an FSC}, which means that
\begin{equation}\label{FSC cond}
    \forall F\subset V, \, \exists v\in F: \, h_v>\abs{\{u\in F: v\sim u\}},
\end{equation}
with $F\neq \emptyset$.

One interesting observable in the ASM is the avalanche cluster $Av^x\subset V$, composed of all the vertices that topple when adding a grain at $x\in V$. For a start, one may be interested in understanding how many times a particular site topples. For all $u,\,v\in V$, denote by $N(u,v)$ the integer-valued random variable that gives the number of topplings at $v$ when adding {a grain at $u$} (where the randomness is in the starting configuration). As shown by~\citeauthor{dhar}, the expectation of such a random variable at stationarity can be obtained through the inverse of $\Delta$:
\begin{equation}\label{eq:E[N]}
\mathbb{E}_{\nu_G}\left[N(u,\,v)\right]=\left(\Delta^{-1}\right)(u,\,v).
\end{equation}

{Using the Abelian property of the model,~}\citeauthor{IKP} show a convenient way of performing the {stabilisations} in an avalanche $Av^x$. More precisely, after the (possible) first toppling of $x$, one topples all other unstable vertices until $x$ is the only one left unstable. The vertices that have toppled in this procedure compose what is called the \emph{first-wave} $W_1^x$, and if $x$ is unstable one can continue this procedure to construct the second wave $W^x_2$, and so on. Recall that each vertex, except possibly $x$ itself, topples only once in a wave~\citep{JaraiSurvey}. This leads to the following decomposition of the avalanche cluster:
\begin{equation}\label{eq:decomp_wave}
    Av^x=\bigcup_{i=1}^{N(x)}W^x_i.
\end{equation}
Note that waves are not necessarily nested sets: in fact, for the hierarchical lattice as well as other graphs, they are not~\citep{dhar1994inverse}. This means that {for our model, treating single waves} is not sufficient to study the avalanche size. For $k\in \{1,\,\dots,\,N(x)\}$ we {denote by} $\mathcal{W}^x_k$  the operator that acts on configurations $h\in \mathrm{R}_G$ with $h_x=\deg_G(x)$, by yielding the configuration after the $k$-th wave. Note that for $k\neq N(x)$, $\mathcal{W}^x_k$ returns an unstable configuration at $x$.

The decomposition~\eqref{eq:decomp_wave} {reveals another connection} with spanning trees, {also introduced by~\citeauthor{IKP}}, which we now recall. {Fix $x\in V$ and use the source-wired graph $G^x=G/\{\partial,x\}$ defined above.} Every $h\in \mathrm{R}_G$ that can start a first {nonempty} wave after dropping a grain at $x$ (i.e. that has $h_x=\deg_G(x)$) identifies a unique height function $h^w$ in ${G^x}$ by simply removing the $x$-coordinate from $h$. {Moreover, using~}\eqref{FSC cond} it is easy to see that such $h^w$ is in $\mathrm{R}_{{G^x}}$. Hence a restriction to an appropriate subset of $\mathrm{R}_{{G^x}}$ yields the following bijection:
\begin{equation}\label{bij IKP}
    \{h\in \mathrm{R}_G:h_x=\deg_G(x)\}\longleftrightarrow {E^x}\coloneqq\{h^w\in \mathrm{R}_{{G^x}}:(h^w,\deg_G(x))\in \mathrm{R}_G\}.
\end{equation}
Denote by {$\mathcal A:=\prod_{v\in V\setminus\{x\}}a_{v,G^x}^{m_G(x,v)}$} the operator acting on stable configurations {in ${G^x}$ by sending} {$m_G(x,v)$ grains to each nonsink vertex $v\ne x$ and then stabilising; grains sent from $x$ directly to $\partial$ are absorbed}. For each integer $n\geq 2$ we use the convention $\mathcal{A}^n:=\mathcal{A}^{n-1}\circ \mathcal{A}$, with $\mathcal{A}^1=\mathcal{A}$. {Then~\citeauthor{IKP} show} that the following map is a bijection:
\begin{align}\label{bij IKP wave}
    F^x:=\{\left(h,\,k\right): h\in \mathrm{R}_G,\,h_x=\deg_G(x),\,k\in \{1,\,\dots,\,N(x)\} \}&\xrightarrow{}\mathrm{R}_{{G^x}}\nonumber,\\
    \left(h,\,k\right)&\mapsto\mathcal{A}^k\left(h|_{V\backslash \{x\}}\right).
\end{align}
{Their proof relies upon \emph{inverse waves}, a procedure for reconstructing} the whole wave dynamics from the {post-wave} configuration. Each pair $(h,\,k)$ {on the left-hand side} of~\eqref{bij IKP wave} is interpreted as the configuration after the $k$-th wave started from $h$, i.e. $\mathcal{W}^x_k(h)$. The construction of bijection~\eqref{bij IKP wave} {provides further information about the wave clusters} (i.e. the vertices that topple during the wave). Recall that $\mathrm{R}_{{G^x}}$ is in bijection with the set of two-component spanning forests of $G$ {rooted at $x$ and ${\partial}$}, which are pairs of the form $\left(T^x,\,T^{{\partial}}\right)$. Then one can show that the vertex set that topples in the $k$-th wave arising from $h$ is the vertex set of $T^x$, constructed by {applying the burning algorithm to} $\mathcal{A}^k\left(h|_{V\backslash{x}}\right)$.

The two bijections~\eqref{bij IKP} and~\eqref{bij IKP wave} yield the following counting formula for recurrent configurations in $G$ that satisfy $\{\abs{W^x_1}>t\}$ for $t\in \mathbb{N}$:

\begin{equation}\label{eq:A}     \nu_G\left(|W^x_1|> t\right)=\dfrac{\abs{h\in \mathrm{R}_G:\,|W^x_1|> t}}{\abs{\mathrm{R}_G}}=\dfrac{\abs{E^x\cap\{h^w\in\mathrm{R}_{{G^x}}:\,|V\left(T^x\left(\mathcal{A}(h^w)\right)\right)|> t\}}}{\abs{\mathrm{R}_G}},
\end{equation}
where we used that $\{|W^x_1|>t\}\subset \{h_x=\deg_G(x)\}$ and the fact that $T^x$ is a function of $h^w\in \mathrm{R}_{{G^x}}$. This is one of the key identities of the paper, {which enables us to relate} the problem of the size of the first-wave to a problem on spanning forests. The next main task is now to compare the right-hand side of~\eqref{eq:A} with the corresponding event under the uniform two-component forest
measure.

This is what we do in {the next lemma}. It will be used in Sections~\ref{sec:UB_uniform} and~\ref{sec:LB_uniform} to quantify how far off the first-wave is from the connected component of $x$ in a tree distributed according to  $\mathbb{P}_{{G^x}}$.

\begin{lemma}\label{lm:UB by tree}
Fix $x\in V$ and $V'\subset V$ such that $x\in V'$. {Recall that $G^x=G/\{\partial,x\}$.} Then there exists $C>0$ that depends on $d_G({\partial},\,x)$ and $\max_{y\in V}\deg_G(y)$ such that
\begin{equation}\label{eq:lm1}
    \dfrac{1}{C}\mathbb{P}_{{G^x}}\left(V\left(T^x\right)\nsubset V' \right)\dfrac{\abs {\mathrm{R}_{{G^x}}}}{\abs{\mathrm{R}_{G}}}\leq \nu_G\left(W^x_1\nsubset V' \right)\leq \dfrac{\abs {\mathrm{R}_{{G^x}}}}{\abs{\mathrm{R}_{G}}}\mathbb{P}_{{G^x}}\left(V\left(T^x\right)\nsubset V'\right).
\end{equation}
{Moreover, the same upper bound also holds} for the other waves:
\begin{equation}\label{eq:UB_l_wave}
    \nu_G\left(W^x_\ell\nsubset V'\right)\leq \dfrac{\abs{\mathrm{R}_{{G^x}}}}{\abs{R_G}}\mathbb{P}_{{G^x}}\left(V\left(T^x\right)\nsubset V'\right),
\end{equation}
{for all $\ell\in \mathbb{N}_+$, with the convention} that $W^x_{\ell}=\emptyset$ for $\ell>N(x)$.
\end{lemma}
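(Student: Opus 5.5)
The plan is to work directly with the counting identity \eqref{eq:A}, with the event $\{|W^x_1|>t\}$ replaced by $\{W^x_1\nsubset V'\}$; the same derivation goes through verbatim. We also use the IKP bijection \eqref{bij IKP wave}.

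\emph{Upper bound for the $\ell$-th wave.} Consider the configurations $h\in\mathrm{R}_G$ with $h_x=\deg_G(x)$ and $N(x)\ge \ell$. Via \eqref{bij IKP wave}, the map $h\mapsto \mathcal{A}^\ell(h|_{V\setminus\{x\}})$ is injective, because it is the restriction of a bijection to pairs with $k=\ell$ fixed. By the IKP description, the vertex set of the $\ell$-th wave equals $V(T^x)$ of the forest associated with the image configuration. Hence
\begin{equation*}
\abs{\{h\in\mathrm{R}_G: W^x_\ell\nsubset V'\}}\le \abs{\{\eta\in\mathrm{R}_{G^x}: V(T^x(\eta))\nsubset V'\}}=\abs{\mathrm{R}_{G^x}}\,\mathbb{P}_{G^x}\bigl(V(T^x)\nsubset V'\bigr).
\end{equation*}
Dividing by $\abs{\mathrm{R}_G}$ gives \eqref{eq:UB_l_wave}; the case $\ell=1$ is the right inequality in \eqref{eq:lm1}. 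Configurations with $h_x<\deg_G(x)$ produce empty waves, and since $x\in V'$ they never satisfy the event.

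\emph{Lower bound for the first wave.} By \eqref{eq:A} we need to show
\begin{equation*}
\abs{E^x\cap\{\eta: V(T^x(\mathcal{A}\eta))\nsubset V'\}}\ge C^{-1}\,\abs{\{\eta\in \mathrm{R}_{G^x}: V(T^x(\eta))\nsubset V'\}}.
\end{equation*}
The image $\mathcal{A}(E^x)$ is exactly the set of post-first-wave configurations, i.e.\ the pairs with $k=1$. So it suffices to construct a map $\Phi$ from all of $\mathrm{R}_{G^x}$ into $\mathcal{A}(E^x)$ that satisfies two conditions:
\begin{enumerate}
\item[(i)] $\Phi$ preserves $V(T^x)$, or at least the event $\{V(T^x)\nsubset V'\}$;
\item[(ii)] $\Phi$ is at most $C$-to-one.
\end{enumerate}
Any $\eta=\mathcal{A}^k(h)$ is a $k$-th wave configuration. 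The natural first try is to replace $\eta$ by the corresponding first-wave configuration of a different starting recurrent configuration, with the same last wave. I would take the forest picture instead: a $k$-th wave corresponds to a forest $(T^x,T^\partial)$. I would then modify the forest only in a bounded neighbourhood of a fixed geodesic from $x$ to $\partial$ of length $d_G(\partial,x)$, in a way that forces the configuration to be a first wave, i.e.\ to correspond to $N(x)=1$. Heuristically, $x$ topples only once when the grains it returns are absorbed quickly, and this can be enforced by rerouting edges along the short path to the sink. The modification changes at most $d_G(\partial,x)$ vertices, each with at most $\max_y\deg_G(y)$ edge choices. This gives the multiplicity bound $C=(\max\deg)^{O(d_G(\partial,x))}$.

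The main obstacle is condition (i). The local surgery near the geodesic could alter whether $T^x$ exits $V'$. I would handle this by rerouting only edges of $T^\partial$ and of the boundary between the components, so that $T^x$ can only grow. Growing $T^x$ preserves the event $\{V(T^x)\nsubset V'\}$, since that event is monotone increasing in $T^x$.

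If the forest surgery proves unwieldy, I would fall back on working directly with heights. Starting from $\eta\in\mathrm{R}_{G^x}$, I would lower the heights along the geodesic to produce a configuration $h\in E^x$ whose first wave covers that of $\eta$. Recurrence would be checked via \eqref{FSC cond}, and the number of preimages would be bounded by $(\max\deg)^{d_G(\partial,x)}$.
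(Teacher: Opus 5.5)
\textbf{Upper bounds.} These are correct and essentially the paper's argument. The paper counts pre-wave configurations $\mathcal{A}^{\ell-1}h^w$ and uses that $\mathcal{A}^{\ell-1}$ is a bijection of $\mathrm{R}_{G^x}$. You instead use injectivity of $h\mapsto\mathcal{A}^{\ell}(h|_{V\setminus\{x\}})$ at fixed $k=\ell$. Both rest on the IKP identification of the toppled set with $V(T^x)$.

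\textbf{Lower bound: there is a genuine gap.} Your reduction is sound: an at most $C$-to-one map $\Phi:\mathrm{R}_{G^x}\to\mathcal{A}(E^x)$ that can only enlarge $V(T^x)$ would suffice. But you never construct $\Phi$, and both suggested routes fail as stated.

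\emph{The forest surgery} is unspecified, and it aims at the wrong target. $\mathcal{A}(E^x)$ is the set of post-first-wave configurations of \emph{all} $h$ with $h_x=\deg_G(x)$, whatever the value of $N(x)$, so ``$N(x)=1$'' is not the relevant condition. Membership $\eta\in\mathcal{A}(E^x)$ means that $(\mathcal{A}^{-1}\eta,\deg_G(x))$ is recurrent in $G$. That is a condition on the heights of the pre-wave configuration. You give no way to read it off from the forest of $\eta$, or to enforce it by local edits of that forest; the burning bijection is not local.

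\emph{The fallback} runs in the wrong direction. Lowering heights can only remove topplings, so it cannot produce a first wave that covers the original one. It can also only destroy recurrence of $(\cdot,\deg_G(x))$ in $G$, never create it, since recurrence is monotone increasing in the heights.

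\textbf{The missing idea} is the paper's lifting operator, which raises heights rather than lowering them. Fix a geodesic $\gamma$ from $\partial$ to $x$. For a pre-wave configuration $h^w\in\mathrm{R}_{G^x}$, set the heights at the interior vertices of $\gamma$ to their maximum $\deg_G(v)$; call the result $\mathrm{L}(h^w)$. Then:
\begin{enumerate}
\item $\mathrm{L}(h^w)$ is still in $\mathrm{R}_{G^x}$.
\item With $x$ at height $\deg_G(x)$, it is recurrent in $G$. Burning from $\partial$ propagates along $\gamma$ to $x$, and then everything burns because $\mathrm{L}(h^w)$ is recurrent in $G^x$. Hence $\mathrm{L}(h^w)\in E^x$.
\item Abelian monotonicity gives $\mathcal{T}(h^w)\subset\mathcal{T}(\mathrm{L}(h^w))$, so the event $\{\mathcal{T}\nsubset V'\}$ is preserved.
\item $\mathrm{L}$ is at most $\prod_{v\in\gamma\setminus\{x,\partial\}}\deg_G(v)$-to-one, which gives the constant $C$.
\end{enumerate}
In your framework this is exactly $\Phi=\mathcal{A}\circ\mathrm{L}\circ\mathcal{A}^{-1}$.
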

\begin{proof}
    We start by proving the result for the first-wave, starting from the upper bound of~\eqref{eq:lm1}. {If $W_1^x=\emptyset$, then} the result is trivial, so we now consider the case $W_1^x\neq \emptyset$, which occurs {if and only if $h_x=\deg_G(x)$.}

    Given $h^w\in \mathrm{R}_{{G^x}}$, denote by $\mathcal{T}(h^w)$ the set of vertices that topple when applying $\mathcal{A}$.
    
    Recall that the first-wave in $G$ can be simulated in the wired graph since we know that \mbox{$W^x_1(h)=\{x\}\cup \mathcal{T}(h|_{V\backslash \{x\}})$}. Note that $h|_{V\backslash \{x\}}$ is {a recurrent configuration} in ${G^x}$. We can use the bijection~\eqref{bij IKP} with configurations in $E^x\subset \mathrm{R}_{{G^x}}$:
    \begin{equation*}
    \abs{h\in \mathrm{R}_G:\{h_x=\deg_G(x)\} \cap\{W^x_1(h)\nsubset V'\}}=\abs{E^x\cap\left\{h^w\in\mathrm{R}_{{G^x}}:\mathcal{T}(h^w)\nsubset V'\right\}}.
    \end{equation*}
    {By definition of the addition operators, ${\mathcal A h^w}\in \mathrm{R}_{{G^x}}$; hence it is associated} with a unique spanning tree of ${G^x}$, or equivalently a two-component spanning forest $\left(T^x,\,T^{{\partial}}\right)$. Using the~\citeauthor{IKP} bijection, one sees that the vertices that topple in the above {stabilisation} procedure are the ones in $T^x$:
    \begin{equation}
       \abs{E^x\cap\left\{h^w\in \mathrm{R}_{{G^x}}:\mathcal{T}(h^w)\nsubset V'\right\}}\leq \abs{h^w\in\mathrm{R}_{{G^x}}:\mathcal{T}(h^w)\nsubset V'}=\abs{\Omega_{{G^x}}:V\left(T^x\right) \nsubset V'}.\label{eq:T_G_V}
    \end{equation}
    Translating everything {in terms of the uniform measure} and using that $\abs{\Omega_G}=\abs{\mathrm{R}_G}$, we get the desired result:
    \begin{equation*}
\begin{aligned}
        \nu_G\left(W^x_1\nsubset V'\right)
        &=\dfrac{\abs{h\in \mathrm{R}_G:W^x_1(h)\nsubset V'}}{\abs{\mathrm{R}_G}}\\
        &\leq \dfrac{\abs {\mathrm{R}_{{G^x}}}}{\abs{\mathrm{R}_{G}}}\dfrac{\abs{\Omega_{{G^x}}:V\left(T^x\right)\nsubset V'}}{\abs {\mathrm{R}_{{G^x}}}}\\
        &=\dfrac{\abs {\mathrm{R}_{{G^x}}}}{\abs{\mathrm{R}_{G}}}\mathbb{P}_{{G^x}}\left(V\left(T^x\right)\nsubset V'\right).
\end{aligned}
    \end{equation*}
    We now prove the lower bound of~\eqref{eq:lm1}. The main issue is that the configurations in $E^x$ do not have a useful representation in terms of uniform spanning trees. Thus we introduce a new way to generate configurations in $E^x$.

Firstly, {choose a geodesic path $\gamma$ in $G$ from $\partial$ to $x$, including both endpoints.} Then, given a configuration $h^w\in \mathrm{R}_{{G^x}}$, denote by $\mathrm{L}$ the \emph{lifting operator} $\mathrm{R}_{{G^x}}\to \mathrm{R}_{{G^x}}$ that assigns maximal height to the internal vertices of $\gamma$:
\begin{equation*}
    \mathrm{L}(h^w):=\begin{cases}
        \deg_G(v),\quad  &v\in \gamma\backslash \{x,\,{\partial}\},\\
        h^w_v, \quad &v\notin \gamma. 
    \end{cases}
\end{equation*}
The fact that $\mathrm{L}(h^w)\in \mathrm{R}_{{G^x}}$ follows immediately from~\eqref{FSC cond}. We claim the following inclusion:
\begin{equation}\label{eq:lm1_lb}
\mathrm{L}\left(\mathrm{R}_{{G^x}}:\mathcal{T}\left(h^w\right)\nsubset V'\right){\subset}E^x\cap \left\{\mathcal{T}\left(h^w\right)\nsubset V'\right\}.
\end{equation}
{We now prove this inclusion.} In fact, take $h^w\in \mathrm{R}_{{G^x}}$ and consider  $\left(\mathrm{L}\left(h^w\right),\,\deg_G(x)\right)$, which we argue is recurrent in $G$. Indeed, {by assigning maximal height to the vertices of $\gamma$, the fire} propagates from ${\partial}$ to $x$ at least  through the vertices in $\gamma$. Then the rest of the vertices will be reached by fire since $\mathrm{L}(h^w)\in \mathrm{R}_{{G^x}}$. This proves that $\mathrm{L}\left(\mathrm{R}_{{G^x}}\right) \subset E^x$. To conclude the proof of~\eqref{eq:lm1_lb}, note that $\mathcal{T}\left(h^w\right)\subset \mathcal{T}\left(\mathrm{L}(h^w)\right)$, since increasing the heights can only create new topplings.  

{With~\eqref{eq:lm1_lb} in hand, it suffices} to find a lower bound for $\abs{\mathrm{L}\left(\mathrm{R}_{{G^x}}:\mathcal{T}\left(h^w\right)\nsubset V'\right)}$. This can be done by exploiting the properties of the lifting operator. In fact, the maximum number of elements in the preimage $\mathrm{L}^{-1}(h^w),\,h^w\in \mathrm{R}_{{G^x}}$, is bounded by the number of stable (and not necessarily recurrent) configurations in $\gamma$, which is {bounded above by}
\begin{equation}\label{eq:stima_C}
    C:=\prod_{v\in \gamma\backslash \{x,\,{\partial}\}}\deg_G(v)\leq \max_{v\in \gamma \backslash \{x,\,{\partial}\}}(\deg_G(v))^{d_G\left(x,\,{\partial} \right)-1}.
\end{equation}
Thus, since each $h^w\in \mathrm{R}_{{G^x}}$ can be reached through $\mathrm{L}$ by at most $C$ stable height {configurations, we get that}
\begin{equation*}
    \abs{\mathrm{L}\left(\mathrm{R}_{{G^x}}:\mathcal{T}\left(h^w\right)\nsubset V'\right)}\geq \dfrac{\abs{\left \{\mathrm{R}_{{G^x}}:{\mathcal{T}\left(h^w\right)\nsubset V'}\right \}}}{C},
\end{equation*}
thus proving the following lower bound: 
\begin{equation}\label{ineq:LB1}
   \abs{E^x\cap\left\{\mathrm{R}_{{G^x}}:\mathcal{T}(h^w)\nsubset V'\right\}}\geq \dfrac{\abs{\{\mathrm{R}_{{G^x}}:\mathcal{T}(h^w)\nsubset V'\}}}{C}.
\end{equation}
From here we proceed as in~\eqref{eq:T_G_V}.

{Finally, we prove}~\eqref{eq:UB_l_wave}. Fix now an integer $\ell\geq 2$. Again, we would like to simulate the $\ell$-th wave in ${G^x}$ using that  $W^x_{\ell}(h)=\{x\}\cup \mathcal{T}\left(\mathcal{A}^{\ell-1}h\right)$. The only problem is that if $\ell>N(x)$, then $W^x_{\ell}(h)=\emptyset$ but it is not necessarily true that $\mathcal{T}\left(\mathcal{A}^{\ell-1}h\right)=\emptyset$. {In other words, we can work} in ${G^x}$, forgetting the height configuration in $x$, only if we know a priori that the $\ell$-th wave is actually {nonempty}.

    In order for the $\ell$-th wave to occur, we have to require that when applying $\left(\mathcal{A}^i\right)_{i\in \{0,\,\dots ,\, \ell-2\}}$ the nearest neighbours of $x$ topple:
    \begin{equation*}
        \abs{\mathrm{R}_G:W^x_{\ell}(h)\neq \emptyset}=\abs{E^x\cap \left\{\bigcap_{i=1}^{\ell-1}\left\{ \{y:y\sim x\}\subset \mathcal{T}\left(\mathcal{A}^{i-1}h^w\right)\right \} \right \}}.
    \end{equation*}
    Hence the correct way to translate the problem from $G$ to ${G^x}$ {is obtained through} the following equality:
    \begin{align}\label{eq:wi<w1}
        \abs{\mathrm{R}_G:W^x_\ell(h)\nsubset V'}&=\abs{E^x \cap \left\{\bigcap_{i=1}^{\ell-1}\left\{  \{y:y\sim x\}\subset \mathcal{T}\left(\mathcal{A}^{i-1}h^w\right)  \right \}\right \}\cap \{\mathcal{T}\left(\mathcal{A}^{\ell-1} h^w\right)\nsubset V'\}}\nonumber\\
        &\leq \abs{\mathrm{R}_{{G^x}}:\mathcal{T}\left(\mathcal{A}^{\ell-1} h^w\right)\nsubset V'}.
    \end{align}
   Since each addition operator is a bijection on the set of recurrent configurations, so is $\mathcal{A}^{\ell-1}$ as a composition of those operations. Therefore
    \begin{equation*} \abs{\mathrm{R}_{{G^x}}:\mathcal{T}\left(\mathcal{A}^{\ell-1}h^w\right)\nsubset V'}=\abs{\mathrm{R}_{{G^x}}:\mathcal{T}\left(h^w\right)\nsubset V'},
    \end{equation*}
    and now one can {proceed} as in~\eqref{eq:T_G_V} to conclude.
\end{proof}    
\begin{remark}\label{rem:larger_t}
Equations~\eqref{eq:T_G_V} and~\eqref{eq:wi<w1} {also hold for events} of the form $\{|W_\ell^x|>t\}$, and therefore Lemma~\ref{lm:UB by tree} also holds for such events, with the appropriate replacements on the left- and right-hand sides.
\end{remark}
\begin{remark}\label{rem:link_with_uni_wave}
    {By the~}\citeauthor{IKP} bijection, $\mathbb{P}_{{G^x}}$ can be interpreted as {the uniform measure on wave configurations}. {More precisely, for each} {nonempty} $V'\subset V$ it holds that
    \begin{align*}
        \mathbb{P}_{{G^x}}\left(V\left(T^x\right)\nsubset V'\right)&=\dfrac{\abs{T\in \Omega_{{G^x}}:V\left(T^x\right)\nsubset V'}}{\abs{\Omega_{{G^x}}}}\\
    &=\dfrac{\abs{(h,\,k)\in F^x:\mathcal{T}\left(\mathcal{A}^{k}h|_{V\backslash \{x\}}\right)\nsubset V'}}{\abs{F^x}}.\\       
    \end{align*}
    Hence Lemma~\ref{lm:UB by tree} allows us to control the behaviour of the first-wave through {a uniformly chosen wave}.
\end{remark}

In order to make use of {Lemma~\ref{lm:UB by tree}, one needs to find an upper bound for the normalisation factor} $\abs{\mathrm{R}_{{G^x}}}/\abs{\mathrm{R}_{G}}$. Such a ratio can be shown to be equal to the expected number of waves with source $x$. 
\begin{corollary}
  {For $x\in V$, it holds} that
\begin{equation}\label{eq:density=E[N]}    \dfrac{\abs{\mathrm{R}_{{G^x}}}}{\abs{\mathrm{R}_{G}}}=\mathbb{E}_{\nu_G}\left[N(x)\right].
\end{equation}  
\end{corollary}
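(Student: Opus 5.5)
We prove \eqref{eq:density=E[N]} by double counting, combining the \citeauthor{IKP} wave bijection \eqref{bij IKP wave} with the stationarity of $\nu_G$. The goal is to express $\abs{\mathrm{R}_{G^x}}$ as the total number of waves, summed over all recurrent configurations of $G$.

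\textbf{Step 1: count the domain of \eqref{bij IKP wave}.} The set $F^x$ consists of pairs $(h,k)$ with $h\in\mathrm{R}_G$, $h_x=\deg_G(x)$ and $1\le k\le N(x)(h)$. Here $N(x)(h)$ denotes the number of topplings of $x$ when a grain is added at $x$ to $h$, which equals the number of waves. Since \eqref{bij IKP wave} is a bijection onto $\mathrm{R}_{G^x}$,
\[
\abs{\mathrm{R}_{G^x}}=\abs{F^x}=\sum_{h\in\mathrm{R}_G,\ h_x=\deg_G(x)} N(x)(h).
\]

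\textbf{Step 2: extend the sum to all recurrent configurations.} If $h_x<\deg_G(x)$, adding a grain at $x$ causes no toppling, so $N(x)(h)=0$. Hence the restriction in the sum can be dropped:
\[
\abs{\mathrm{R}_{G^x}}=\sum_{h\in\mathrm{R}_G}N(x)(h).
\]

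\textbf{Step 3: normalise.} Divide by $\abs{\mathrm{R}_G}$. Because $\nu_G$ is uniform on $\mathrm{R}_G$, the right-hand side becomes $\mathbb{E}_{\nu_G}[N(x)]$, where $N(x)=N(x,x)$ in the shorthand fixed in the notation section. This gives \eqref{eq:density=E[N]}.

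As a consistency check, \eqref{eq:E[N]} gives $\mathbb{E}_{\nu_G}[N(x)]=\Delta^{-1}(x,x)$. Separately, the matrix-tree theorem gives $\abs{\mathrm{R}_{G^x}}/\abs{\mathrm{R}_G}=\det\Delta_{G^x}/\det\Delta$. Here $\Delta_{G^x}$ is $\Delta$ with the row and column of $x$ removed, so this ratio is the $(x,x)$ cofactor divided by the determinant, which is exactly $\Delta^{-1}(x,x)$ by Cramer's rule. This alternative route is fully self-contained and gives the same identity.

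\textbf{Main obstacle.} The only delicate point is in Step 1. One must make sure that \eqref{bij IKP wave} is a genuine bijection onto all of $\mathrm{R}_{G^x}$, not just an injection, and that $N(x)$ in $F^x$ counts waves exactly, with each toppling of $x$ starting one wave. Both facts are taken from \citeauthor{IKP} as recalled above. If there is any doubt about surjectivity, the determinant argument resolves it independently.
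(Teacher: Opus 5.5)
Your proof is correct, but your main argument takes a different route from the paper's. The paper's proof is exactly what you present as a consistency check. It passes from recurrent configurations to spanning trees via the burning bijection, writes $\abs{\Omega_{G^x}}/\abs{\Omega_G}=\det\Delta_x/\det\Delta$ by the matrix-tree theorem, identifies this with $\Delta^{-1}(x,x)$ by Cramer's rule, and then invokes Dhar's formula~\eqref{eq:E[N]}.

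Your double-counting argument reads the identity directly off the domain of the \citeauthor{IKP} bijection~\eqref{bij IKP wave}: $\abs{F^x}=\sum_{h}N(x)(h)$. Dropping the restriction $h_x=\deg_G(x)$ is harmless, since otherwise $N(x)=0$, and dividing by $\abs{\mathrm{R}_G}$ only uses that $\nu_G$ is uniform, not stationarity.

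The two routes buy different things:
\begin{itemize}
\item \emph{Your route} is purely combinatorial and needs neither Dhar's formula nor any linear algebra. It also gives a clean interpretation: the total number of waves at $x$, summed over $\mathrm{R}_G$, equals the number of spanning trees of $G^x$. Its cost is that it rests on the full surjectivity of~\eqref{bij IKP wave}, which is the deep part of \citeauthor{IKP} (inverse waves).
\item \emph{The paper's route} uses only standard algebraic facts and does not depend on that surjectivity. It could even serve as an independent check of it. It also lands directly on $\Delta^{-1}(x,x)$, which through~\eqref{eq:Delta-G} is the form the paper needs to evaluate the normalisation explicitly from the Green's function in Corollary~\ref{cor:E[N]}.
\end{itemize}
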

\begin{proof} Using Dhar's bijection and {the matrix-tree theorem}, we can express the {normalisation} in terms of {the inverse matrix $\Delta^{-1}$}, where $\Delta$ is defined in~\eqref{eq:delta}: 
\begin{equation*}
    \dfrac{\abs{\mathrm{R}_{{G^x}}}}{\abs{\mathrm{R}_{G}}}=\dfrac{\abs{\Omega_{{G^x}}}}{\abs{\Omega_{G}}}=\dfrac{\det \Delta_{x}}{\det \Delta}=\left(\Delta^{-1}\right)(x,\,x),
\end{equation*}
where we denoted by $\Delta_x$ the matrix obtained from $\Delta$ by removing its $x$-th column and row and used Cramer's rule to compute determinants. Hence~\eqref{eq:E[N]} yields the {desired} result.
\end{proof}
\subsection{The hierarchical lattice}\label{subsec:hilattice} We are now going to define the family of graphs on which we will study the ASM: the \emph{hierarchical lattices}.

Fix a \emph{branching} number $b\in \mathbb{N},\,b\geq 2$, and a \emph{segmenting} number $s\in \mathbb{N},\,s\geq 2$. From this point onwards in the paper, $b$ and $s$ will be fixed and therefore {omitted from the notation for the sake of exposition}. Define the starting graph to be composed of two vertices $a$ and $z$ connected by {an undirected edge}, so that $D_0\coloneq\left(\{a,z\},\,\overline{az} \right)$.
We can iteratively obtain
 $D_n$ from $D_{n-1}$ by replacing each edge of $D_{n-1}$ with $b$ parallel {branches consisting of} $s$ consecutive edges (hence we also add $b(s-1)$ new vertices {for every edge removed}). See Figure~\ref{fig:MKlattice} for an example.

We start by pointing out some properties of these graphs. The fact that $\abs{E_n}=(bs)^n$ is immediate from the definition of the model. For each edge at level $n-1$ there are $b(s-1)$ new vertices {at level $n$; therefore} $$\abs{V_n}-\abs{V_{n-1}}=(bs)^{n-1}b(s-1).$$ {Hence, for all $n\in \mathbb{N}$,}
\begin{equation}\label{eq:V and E}
    \abs{V_n}=2+\dfrac{(s-1)}{s}\sum_{i=1}^n(bs)^i=(bs)^n\dfrac{b(s-1)}{bs-1}+\dfrac{b-1}{bs-1}+1. 
\end{equation}
{Also note that $\deg_n\left(a\right)=\deg_n\left(z\right)=b^n$.}

We also introduce {the following shorthand}, which represents the number of edges of a spanning tree of $D_n$: 
\begin{equation}\label{def:c_n}
    c_n:=\abs{V_n}-1.
\end{equation}

We now introduce a special class of vertices in $V_n$, which are the ones for which we prove~Theorem~\ref{thm:powerlaw}. Fix $n \in \mathbb{N}_+$. A collection of \emph{pivotal points} $(p_j)_{j\in \{1,\dots,n-1\}}\subset V_n$ in $D_n$ satisfies
\begin{subequations}\label{def:pivotal}
\begin{align}     d_n\left(a,\,p_j\right)&=s^j,&& \forall j\in \{1,\,\dots,\,n-1\},\label{eq:eq_a}\\
d_n(p_j,\,p_{j+1})&=s^j(s-1),&& \quad \forall j\in \{1,\,\dots,\,n-2\}.
\end{align}
\end{subequations}
There are several possible choices of $p_j$'s satisfying~\eqref{eq:eq_a}, but once a branch is chosen (for example, the uppermost branch in the orientation of the graph given by {Figure~\ref{fig:MKlattice}), we uniquely} determine the sequence $p_k$ for $k>j$. Our results hold regardless of the choice of the branch. In this way the distance between $a$ and $p_j$ remains constant in each $D_n$. Notice that $\deg_n(p_j)=2b^j$ for all $n>j$.

For the boundary condition used in Sections~\ref{sec:UB_uniform}--\ref{sec:R'}, recall the notation
\[
 \partial=\{a,z\},\qquad U_n:=V_n\setminus\partial,\qquad
 G_n=D_n/\partial,\qquad G_n^x=G_n/\{\partial,x\}\quad(x\in U_n).
\]
Thus $V(G_n)=U_n\cup\{\partial\}$, with $\partial$ now a single sink vertex, and $G_n^x$ is equivalently obtained by wiring $\{a,z,x\}$ in $D_n$. We use
\[
 \nu_n:=\nu_{G_n},\qquad
 \mathbb{E}_n:=\mathbb{E}_{\nu_n},\qquad
 \mathbb{P}_n^x:=\mathbb{P}_{G_n^x}.
\]
Under $\mathbb{P}_n^x$, $T^x$ is the $x$-component of the two-root forest on $G_n$ obtained by separating $x$ from $\partial$. Its nonsink vertices are identified with vertices of $U_n$. Throughout, $d_n$ and $\deg_n$ are computed in $D_n$; in particular, wiring the boundary does not change the degree of a vertex in $U_n$. We write $\mathcal{P}_n^v$ and $\mathcal{E}_n^v$ for the law and expectation of the random walk on $D_n$ started at $v$, and abbreviate its boundary hitting time to $\tau_\partial$. This keeps the two endpoints available in the geometric and random-walk arguments.

\subsection{The random walk Green's function}\label{sec:green}

{In this section, we} present some results about the Green's function for the simple symmetric random walk on $D_n$, {which will be used below for the ASM on $G_n$. We first recall the corresponding facts for a finite, connected, undirected multigraph $G=(V\cup\{\partial\},E)$ with sink $\partial$.} Define the Green's function for the simple symmetric random walk on $G$ killed when it reaches ${\partial}$ as
\begin{equation*}
    \mathcal{G}_G\left(x,y\right):=\mathbb{E}^x\left[\sum_{i=0}^{\tau_{{\partial}}-1}\ONE\left\{X_i=y\right \}\right], \quad \forall x,y \in V\cup \{{\partial}\}.
\end{equation*}
The connection between the ASM on $G$ and $\mathcal{G}_G$ is shown in~\citet[Exercise $3.4$]{JaraiSurvey}, {through the following equation involving} the inverse matrix of $\Delta$ as defined in~\eqref{eq:delta}:
\begin{equation}\label{eq:Delta-G}
\mathcal{G}_G(x,y)=\deg_G(y)\left(\Delta^{-1}\right)(x,y), \quad \forall x,y\in V. 
\end{equation}
{We have already shown} in~\eqref{eq:E[N]} and~\eqref{eq:density=E[N]} how the matrix $\Delta^{-1}$ contains information about the sandpile dynamics. While one could use some linear algebra together with the recursive structure of our graph to compute specific entries of $\Delta^{-1}$, we prefer to compute $\mathcal{G}_G$ with probabilistic arguments.

As the reader will recall, the reversibility of the random walk yields many useful results, {one of which is given in}~\citet[Lemma 4.6.1]{lawler}:
\begin{equation}\label{lm:Green}
\mathcal{P}^x_G\left(\tau_y<\tau_{{\partial}}\right)=\dfrac{\mathcal{G}_G\left(x,y\right)}{\mathcal{G}_G\left(y\right)},\, \forall x,\,y\in V.
\end{equation}
Another useful equality is shown in \citet[Exercise 2.1]{lyons2017probability}, {using the fact that the invariant distribution} of the walk is proportional to the degree: 
\begin{equation}\label{eq:time rev}
\mathcal{G}_G(x,y)=\dfrac{\deg_G(y)}{\deg_G(x)}\mathcal{G}_G(y,x),\, \forall x,\,y\in V.
\end{equation}
The symmetry of $D_n$ will allow us to rephrase many questions about random walks on the graph in terms of a one-dimensional random walk on a subset of $\mathbb{Z}$. Hence, for the reader's convenience, we recall two classical results on the gambler's ruin problem. Consider a simple symmetric random walk on $\{a,\,\dots,\,z\}$ with $a,z \in \mathbb{Z},\,z>a$, and $X_0=x\in \{a,...,z\}$. {Then, for $x,y\in \{a,\,\dots ,\,z\}$,}
\begin{equation}
    \label{eq:gambler}    \mathcal{P}^x_{\{a,\dots, z\}}\left(\tau_z<\tau_a\right)=\dfrac{x-a}{z-a},\quad \mathcal{E}^x_{\{a,\dots,z\}}\left[\sum_{i=0}^{\tau_{\{a,z\}}-1}\ONE\left\{X_i=y\right\}\right]=\dfrac{2(x\wedge y-a)(z-x\vee y)}{z-a}.
\end{equation}
{For references, see}~\citet[Theorem 4.8.7]{durrett} and~\citet[Theorem 4.6.1]{lawler}.

\subsubsection{Green's function on the hierarchical lattice}\label{subsec:Green}
Fix $n\geq2$ and pivotal points $(p_k)_{1\leq k<n}$ as in~\eqref{def:pivotal}. The walks on $D_n$ and $G_n$ have the same law before absorption at $\partial$. We use $\tau_\partial$ for this absorption time in either graph: it means hitting $\{a,z\}$ in $D_n$ and hitting the wired sink in $G_n$. In particular, for $x,y\in U_n$,
\begin{equation*}
 \mathcal{G}_n(x,y):=
 \mathcal{E}_{D_n}^{x}\!\left[\sum_{i=0}^{\tau_\partial-1}\ONE\{X_i=y\}\right]
 =\mathcal{E}_{G_n}^{x}\!\left[\sum_{i=0}^{\tau_\partial-1}\ONE\{X_i=y\}\right]
 =\mathcal{G}_{G_n}(x,y).
\end{equation*}
We extend $\mathcal{G}_n$ by zero when either argument belongs to $\partial$. The identity~\eqref{eq:Delta-G} applies with $G=G_n$. We use the unwired graph $D_n$ for the computations below, so that events involving $\tau_a$ and $\tau_z$ retain their meaning.

The main result of this section is the computation of the absorption probability $\mathcal{P}^{p_k}_n\left(\tau_{p_j}<{\tau_\partial}\right)$, for $j<k<n$, which is achieved in Corollary~\ref{cor:absorb}. Its importance is related to Proposition~\ref{prop:GK}.
We start with a simpler case in which one can exploit some symmetry to reduce the problem to the gambler's ruin.
\begin{lemma}\label{lm:gambler G}
    It holds that
    \begin{equation*}
        \mathcal{G}_n(p_{n-1})=s^{n-1}\dfrac{2(s-1)}{s}.
    \end{equation*}
\end{lemma}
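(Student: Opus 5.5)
The plan is to compute $\mathcal{G}_n(p_{n-1})$ through the electrical-network interpretation of the killed Green's function. Put unit conductance on every edge of $D_n$. For the walk started at $x\in U_n$, the classical identity
\[
\mathcal{P}^x_n\bigl(\tau_\partial<\tau_x^+\bigr)=\frac{1}{\deg_n(x)\,\mathcal{R}_n(x\leftrightarrow\partial)}
\]
holds, where $\tau_x^+$ is the first return time and $\mathcal{R}_n(x\leftrightarrow \partial)$ is the effective resistance between $x$ and the set $\{a,z\}$ (see e.g. \citet{lyons2017probability}). The number of visits to $x$ before absorption is geometric, so
\[
\mathcal{G}_n(x)=\frac{1}{\mathcal{P}^x_n(\tau_\partial<\tau_x^+)}=\deg_n(x)\,\mathcal{R}_n(x\leftrightarrow\partial).
\]
Since $\deg_n(p_{n-1})=2b^{n-1}$, it only remains to compute one effective resistance.

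The first step is the endpoint-to-endpoint resistance $r_m$ of $D_m$. Write $D_m$ as $b$ parallel branches, each being a series of $s$ copies of $D_{m-1}$. Then $r_m=\frac{1}{b}\cdot s\,r_{m-1}$ with $r_0=1$, so $r_m=(s/b)^m$.

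The second step uses the top-level decomposition of $D_n$. It consists of $b$ branches from $a$ to $z$, each made of $s$ consecutive copies of $D_{n-1}$ glued at junction vertices. The vertex $p_{n-1}$ is the first junction on the chosen branch, since $d_n(a,p_{n-1})=s^{n-1}$ is the diameter of one copy. Once $a$ and $z$ are shorted into $\partial$, the other $b-1$ branches connect $\partial$ to itself and carry no current, so they drop out. On the chosen branch, $p_{n-1}$ is joined to $\partial$ through two routes in parallel:
\begin{itemize}
\item one copy of $D_{n-1}$ towards $a$, with resistance $r_{n-1}$;
\item $s-1$ copies in series towards $z$, with resistance $(s-1)r_{n-1}$.
\end{itemize}
Hence
\[
\mathcal{R}_n(p_{n-1}\leftrightarrow\partial)=\frac{r_{n-1}\,(s-1)r_{n-1}}{r_{n-1}+(s-1)r_{n-1}}=\frac{s-1}{s}\Bigl(\frac{s}{b}\Bigr)^{n-1}.
\]
Multiplying by $\deg_n(p_{n-1})=2b^{n-1}$ gives $\mathcal{G}_n(p_{n-1})=2s^{n-1}(s-1)/s$, as claimed.

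The only delicate step is to justify the reductions rigorously: that the parallel branches are irrelevant after wiring, and that each glued copy behaves as a two-terminal network with resistance $r_{n-1}$. Both are standard series/parallel laws, because the copies meet only at their endpoints. If one prefers to stay closer to the gambler's-ruin formulation \eqref{eq:gambler}, there is a consistent cross-check. Observe the walk only at the junction vertices of the chosen branch. By the $a\leftrightarrow z$ symmetry of each copy of $D_{n-1}$, this trace is a simple random walk on $\{0,\dots,s\}$, and by \eqref{eq:gambler} it visits site $1$ on average $2(s-1)/s$ times. Each such visit then generates on average $\deg_n(p_{n-1})\,r_{n-1}/2=s^{n-1}$ visits to $p_{n-1}$ before another junction is reached, which again yields the formula.
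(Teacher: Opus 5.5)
Your proof is correct, and it takes a different route from the paper's. The paper does not use resistances. It observes that the level process $d_n(a,X_\cdot)$ is a simple symmetric random walk on $\{0,\dots,s^n\}$, because every vertex of $U_n$ has as many edges towards $a$ as towards $z$. It also uses that the walk stays in the chain containing $p_{n-1}$ until absorption, and that $p_{n-1}$ is the only vertex of that chain at level $s^{n-1}$. The gambler's-ruin Green's function \eqref{eq:gambler} with $x=y=s^{n-1}$ then gives $2s^{n-1}(s^n-s^{n-1})/s^n$ in one line. You instead combine $\mathcal{G}_n(x)=\deg_n(x)\,\mathcal{R}_n(x\leftrightarrow\partial)$ with series/parallel reduction and $r_m=(s/b)^m$.

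The paper's route is shorter and reuses the projection trick that drives its other hitting-probability computations (Proposition~\ref{prop:G_pivotal}, Lemma~\ref{lm:p in C}). However, it depends on $p_{n-1}$ being the unique vertex at its level, which is why $\mathcal{G}_n(p_j)$ for $j<n-1$ later needs the telescoping recursion. Your route needs the escape-probability identity but no claim that the projection is Markov, which the paper asserts from the symmetry of the graph. It is also more flexible: the network seen from any $p_j$ with $\partial$ grounded remains series-parallel, so the same bookkeeping could compute $\mathcal{G}_n(p_j)=2b^j\,\mathcal{R}_n(p_j\leftrightarrow\partial)$ directly.

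Your cross-check sits between the two methods. It applies gambler's ruin on the junction jump chain, at the coarse scale of copies of $D_{n-1}$, and multiplies by $s^{n-1}$ expected local visits per junction visit, rather than projecting at the scale of single edges.
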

\begin{figure}
\begin{tikzpicture}[x=1cm,y=1cm,line width=.85pt,
 line cap=round,line join=round,every node/.style={font=\Large},
 upper/.style={dash pattern=on 3pt off 2pt},
 middle/.style={pattern=horizontal lines,pattern color=black!70},
 lower/.style={pattern=dots,pattern color=black}]
\coordinate (A) at (0,0);
\coordinate (B) at (12,0);
\foreach \i/\y/\sty in {1/2.35/upper,2/0/middle,3/-2.35/lower}{
  \coordinate (p\i3) at (3,\y);
  \coordinate (p\i2) at (6,\y);
  \coordinate (p\i1) at (9,\y);
  \foreach \a/\b in {3/2,2/1}{
    \draw[\sty] (p\i\a)
      .. controls +(0.9,.68) and +(-0.9,.68) .. (p\i\b)
      .. controls +(-0.9,-.68) and +(0.9,-.68) .. (p\i\a) -- cycle;
  }
}
% Original rounded end bubbles.
\draw[upper] (A) .. controls (-.2,1.5) and (1.5,2.35) .. (p13)
 .. controls (2.6,1.15) and (1.2,1.0) .. (A) -- cycle;
\draw[upper] (p11) .. controls (10.5,2.35) and (12.2,1.5) .. (B)
 .. controls (10.8,1.0) and (9.4,1.15) .. (p11) -- cycle;
\draw[middle] (A) .. controls (1,.45) and (2,.45) .. (p23)
 .. controls (2,-.45) and (1,-.45) .. (A) -- cycle;
\draw[middle] (p21) .. controls (10,.45) and (11,.45) .. (B)
 .. controls (11,-.45) and (10,-.45) .. (p21) -- cycle;
\draw[lower] (A) .. controls (-.2,-1.5) and (1.5,-2.35) .. (p33)
 .. controls (2.6,-1.15) and (1.2,-1.0) .. (A) -- cycle;
\draw[lower] (p31) .. controls (10.5,-2.35) and (12.2,-1.5) .. (B)
 -- (p31) -- cycle;
\foreach \i in {1,2,3}{
  \foreach \j in {1,2,3}{\fill (p\i\j) circle[radius=1.65pt];}
}
\fill (A) circle[radius=1.65pt];
\fill (B) circle[radius=1.65pt];
\node[left=10pt] at (A) {$a$};
\node[right=10pt] at (B) {$z$};
\end{tikzpicture}
\caption{In $D_n^{3,4}$ there are three chains, represented as the dashed, hatched and dotted subgraphs respectively.}
\label{fig:chain}
\end{figure}
\begin{proof}    
    Take $X_0=p_{n-1}$. Define a {\em chain} as one of the $b$ subgraphs of $D_n$ that {consist of $s$ copies} of $D_{n-1}$ glued together between $a$ and $z$ (see Figure~\ref{fig:chain}). The path $\left(X_k\right)_{k\leq {\tau_\partial}}$ is contained in the chain where $p_{n-1}$ lies. Since in this chain $p_{n-1}$ is the unique vertex such that $d_n(a,\,p_{n-1})=s^{n-1}$, we can look at the projected random walk $\widetilde{X}_\cdot:=d_n\left(a,X_\cdot\right)$. In fact, it holds that $X_k=p_{n-1}$ if and only if $\widetilde{X}_k=s^{n-1}$ and ${\tau_\partial}=\widetilde{\tau}_{\{0,s^n\}}$. In addition, the symmetry of the graph yields that $\widetilde{X}$ is distributed as a symmetric random walk over the set $\left\{0,\,\dots,\,s^n\right\}$, so that we can use the result from the gambler's ruin problem~\eqref{eq:gambler}:
    \begin{equation*}
        \mathcal{G}_n(p_{n-1})=\mathcal{E}_{\{0,\dots,s^n\}}^{s^{n-1}}\left[\sum_{i=0}^{\tau_{\{0,s^{n}\}}}\ONE\{X_i=s^{n-1}\}\right]=s^{n-1}\dfrac{2(s-1)}{s},
    \end{equation*}
    which is exactly the expression in the claim.
\end{proof}
{Having an expression for $\mathcal{G}_{n}(p_{n-1})$, we} can exploit the hierarchical structure of the graph to obtain $\mathcal{G}_n(p_j,\,p_k)$ by expressing it in terms of $\mathcal{G}_n(p_{n-1})$. {We use the notation $\mathcal{E}_n^x=\mathcal{E}_{D_n}^x$ fixed above.} 
\begin{prop}\label{prop:G_pivotal}
For all $j,\,k\in \{1,\,\dots,\,n-1\}$, it holds that    \begin{equation*}
       \mathcal{G}_{n}(p_j,p_k)=
       \begin{cases}
       \displaystyle{}
        s^j\dfrac{2b(s-1)}{(bs-1)}(1-(bs)^{k-n}),\,& j\leq k,\\
         b^{k-j}s^k\dfrac{2b(s-1)}{bs-1}(1-(bs)^{j-n})  ,\,&j>k.  
         \end{cases}
\end{equation*}
   {Therefore, for any fixed} $j$,
\begin{equation*}
\sup_{n>j}\sup_{k\in \{1,\dots,n-1\}}\mathcal{G}_n(p_j,p_k)<\infty.
\end{equation*}
 \end{prop}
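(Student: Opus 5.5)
The plan is to compute the diagonal values $\mathcal{G}_n(p_k)$ first, by backward induction on $k$ starting from Lemma~\ref{lm:gambler G}. The off-diagonal values then follow from hitting probabilities and reversibility. Throughout, write $c:=2b(s-1)/(bs-1)$. The structural fact used repeatedly is that, for $k+1\le n$, the vertex $p_k$ lies inside a copy of $D_{k+1}$ whose endpoints are $a$ and $p_{k+1}$. Indeed $d_n(a,p_k)=s^k$, $d_n(a,p_{k+1})=s^{k+1}$ and $d_n(p_k,p_{k+1})=s^k(s-1)$. This copy is attached to the rest of $D_n$ only through its two endpoints. Similarly, for $j<k$, $p_j$ lies inside a copy of $D_k$ with endpoints $a$ and $p_k$.

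\emph{Off-diagonal terms, $j\le k$.} By the strong Markov property at $\tau_{p_k}$ (equivalently, by \eqref{lm:Green}),
\[
\mathcal{G}_n(p_j,p_k)=\mathcal{P}_n^{p_j}(\tau_{p_k}<\tau_\partial)\,\mathcal{G}_n(p_k).
\]
Starting from $p_j$, the walk stays in the copy of $D_k$ between $a$ and $p_k$ until it hits one of these two endpoints. Exactly as in the proof of Lemma~\ref{lm:gambler G}, the projection $d_n(a,X_\cdot)$ restricted to this copy is a simple symmetric random walk on $\{0,\dots,s^k\}$. The gambler's ruin formula \eqref{eq:gambler} then gives $\mathcal{P}_n^{p_j}(\tau_{p_k}<\tau_\partial)=s^{j-k}$. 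For $j>k$, we use \eqref{eq:time rev} together with $\deg_n(p_i)=2b^i$:
\[
\mathcal{G}_n(p_j,p_k)=b^{k-j}\,\mathcal{G}_n(p_k,p_j).
\]
Hence both cases of the claim reduce to the diagonal identity $\mathcal{G}_n(p_k)=s^k c\,(1-(bs)^{k-n})$.

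\emph{Diagonal recursion.} Fix $k\le n-2$ and let $B$ be the copy of $D_{k+1}$ with endpoints $a$ and $p_{k+1}$. Visits to $p_k$ before the first exit from $B$ are counted by the Green's function of $B$ killed at $\{a,p_{k+1}\}$. Lemma~\ref{lm:gambler G} applied to $D_{k+1}$ gives this as $s^k\,2(s-1)/s$. After the exit, further visits occur only if the walk exits at $p_{k+1}$, which by the projection argument has probability $1/s$. By strong Markov, the remaining visits then contribute $\mathcal{G}_n(p_{k+1},p_k)$. Using \eqref{eq:time rev} and then the off-diagonal formula from the previous step,
\[
\mathcal{G}_n(p_{k+1},p_k)=b^{-1}\,\mathcal{G}_n(p_k,p_{k+1})=(bs)^{-1}\,\mathcal{G}_n(p_{k+1}).
\]
Note that this formula only needs $\mathcal{G}_n(p_{k+1})$, which is already known in the induction, so there is no circularity. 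Altogether,
\[
\mathcal{G}_n(p_k)=s^{k}\frac{2(s-1)}{s}+\frac{\mathcal{G}_n(p_{k+1})}{bs^2}.
\]
We now induct downward from $k=n-1$. The base case is Lemma~\ref{lm:gambler G}, and $s^{n-1}2(s-1)/s$ coincides with $s^{n-1}c\,(1-(bs)^{-1})$. For the step, the identity $2(s-1)+c/b=sc$ shows that the closed form is preserved by the recursion. The main point needing care is the decomposition: one must check that $B$ meets the rest of the graph only at $a$ and $p_{k+1}$, and that $a\in\partial$ kills the walk.

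\emph{Uniform bound.} For $j\le k$ we have $\mathcal{G}_n(p_j,p_k)\le s^j c$. For $k<j$ we have $\mathcal{G}_n(p_j,p_k)\le b^{k-j}s^k c\le s^j c$. The bound $s^j c$ is independent of $n$ and $k$, which gives the supremum statement.
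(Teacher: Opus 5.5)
Your proof is correct, but it reaches the diagonal values by a different recursion from the paper's.

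The paper fixes $j$ and inducts on the system size. It views $D_n$ as the copy between $a$ and $p_n$ inside $D_{n+1}$, and splits the visits to $p_j$ according to whether the walk reaches $p_n$ before $a$. This gives $\mathcal{G}_{n+1}(p_j)-\mathcal{G}_n(p_j)=(bs)^{j-n}\mathcal{G}_{j+1}(p_j)$, which it telescopes into a geometric sum with base case $\mathcal{G}_{j+1}(p_j)$ from Lemma~\ref{lm:gambler G}.

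You instead fix $n$ and peel off the innermost scale. Splitting at the exit from the copy of $D_{k+1}$ between $a$ and $p_{k+1}$ gives the one-step recursion $\mathcal{G}_n(p_k)=s^{k}\frac{2(s-1)}{s}+\frac{\mathcal{G}_n(p_{k+1})}{bs^2}$. You run it downward from $k=n-1$ and close it with the identity $2(s-1)+c/b=sc$ rather than by summing a series.

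The ingredients are the same in both arguments:
\begin{itemize}
\item Lemma~\ref{lm:gambler G};
\item the gambler's-ruin hitting probabilities $s^{j-k}$;
\item reversibility with $\deg_n(p_i)=2b^i$;
\item the strong Markov property.
\end{itemize}
The real difference is what the unknowns are. Yours are the values $\mathcal{G}_n(p_k)$ along the pivotal sequence in one fixed graph. You feed the adjacent-scale relation $\mathcal{G}_n(p_k,p_{k+1})=s^{-1}\mathcal{G}_n(p_{k+1})$ back into the induction, which you correctly note is not circular. The paper's unknowns are $\mathcal{G}_m(p_j)$ for one vertex across graphs of increasing size. The paper's increments make it immediately visible that $\mathcal{G}_n(p_j)$ increases in $n$ to $s^j\,2b(s-1)/(bs-1)$. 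Your route avoids comparing Green's functions of different graphs for the same vertex.

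You also prove the uniform bound $s^j\,2b(s-1)/(bs-1)$ explicitly, which the paper leaves implicit.
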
 
\begin{proof}
    We start by computing $\mathcal{G}_n(p_j)$. {Introduce the shorthand} $E_{i,j}:=\left\{X_i=p_j\right\}$. Then we split the returns to $p_j$ before and after (possibly) touching $p_n$ as follows:
\begin{align}       
        \mathcal{G}_{n+1}(p_j)&=\clE_{n+1}^{p_j}\left[\ONE\{\tau_{p_n}<\tau_a\} \sum_{i=0}^{{\tau_\partial}}\ONE\{E_{i,j}\}\right]+\clE_{n+1}^{p_j}\left[\ONE\{\tau_{p_n}>\tau_a\} \sum_{i=0}^{{\tau_\partial}}\ONE\{E_{i,j}\}\right]\nonumber\\
        &=\clE_{n+1}^{p_j}\left[\ONE\{\tau_{p_n}<\tau_a\}\left( \sum_{i=0}^{\tau_{p_{n}}-1}\ONE\{E_{i,j}\}+\sum_{i=\tau_{p_{n}}}^{{\tau_\partial}}\ONE\{E_{i,j}\}\right)\right]\nonumber\\
        &\quad+\clE_n^{p_j}\left[\ONE\{\tau_z>\tau_a\} \sum_{i=0}^{{\tau_\partial}}\ONE\{E_{i,j}\}\right]\label{eq:exp_G_par}.
    \end{align}In the second equality, {knowing that on $\{\tau_{p_n}>\tau_a\}$,} the random walk in the second expectation {has the same law as a random walk} on $D_n$ in which $p_n$ is mapped into $z$.

Let us analyse the first summand of~\eqref{eq:exp_G_par}. We use the Markov property to conclude that
\begin{align*}   \clE_{n+1}^{p_j}\left[\ONE\{\tau_{p_n}<\tau_a\}\sum_{i=\tau_{p_n}}^{{\tau_\partial}}\ONE\{E_{i,j}\}\right]&=\mathcal{P}_{n+1}^{p_j}\left(\tau_{p_n}<\tau_a\right)\clE^{p_n}_{n+1}\left[\sum_{i=0}^{{\tau_\partial}}\ONE\{E_{i,j}\}\right]\\
&=\mathcal{P}_{n+1}^{p_j}\left(\tau_{p_{n}}<\tau_a\right)\mathcal{G}_{n+1}\left(p_n,p_j\right).
\end{align*}
{Secondly, conditioning on $\{\tau_{p_n}<\tau_a\}$,} we deduce that
\begin{equation*}    \clE_{n+1}^{p_j}\left[\ONE\{\tau_{p_n}<\tau_a\}\sum_{i=0}^{\tau_{p_{n}}-1}\ONE\{E_{i,j}\}\right]=\clE_n^{p_j}\left[\ONE\{\tau_z<\tau_a\} \sum_{i=0}^{{\tau_\partial}}\ONE\{E_{i,j}\}\right].
\end{equation*}
Putting the above two expressions into~{\eqref{eq:exp_G_par}, one obtains}
\begin{equation}\label{eq:utility}
    \mathcal{G}_{n+1}(p_j)=\mathcal{G}_n(p_j)+\mathcal{P}_{n+1}^{p_j}\left(\tau_{p_{n}}<\tau_a\right)\mathcal{G}_{n+1}\left(p_n,p_j\right).
\end{equation}
Using first the reversibility of the Green's function~\eqref{eq:time rev} and then~{\eqref{lm:Green}, one has} that

\begin{equation*}
    \mathcal{G}_{n+1}\left(p_n,p_j\right)=\dfrac{\deg(p_j)}{\deg(p_n)}\mathcal{G}_{n+1}(p_j,p_n)=b^{j-n}\mathcal{P}_{n+1}^{p_j}\left(\tau_{p_n}<{\tau_\partial}\right)\mathcal{G}_{n+1}(p_n).
\end{equation*}
{Substituting the above expression into~}\eqref{eq:utility} and using that $\mathcal{P}_{n+1}^{p_j}\left(\tau_{p_n}<{\tau_\partial}\right)=\mathcal{P}_n^{p_j}\left(\tau_z<\tau_a\right)$, one has that
\begin{equation*}
    \mathcal{G}_{n+1}(p_j)=\mathcal{G}_n(p_j)+b^{j-n}\mathcal{G}_{n+1}(p_n)\left(\mathcal{P}_n^{p_j}\left(\tau_z<\tau_a\right)\right)^2.
\end{equation*}
Using the projected random walk $\widetilde{X}$ as we did in Lemma~\ref{lm:gambler G}, one can use the gambler's ruin result~\eqref{eq:gambler} to prove that
\begin{equation*}    \mathcal{P}^{p_j}_n\left(\tau_z<\tau_a\right)=\dfrac{d_n\left(a,p_j\right)}{d_n(a,z)}=s^{j-n},
\end{equation*}
from which we can obtain, using Lemma~\ref{lm:gambler G} twice, that
\begin{equation*}
    \mathcal{G}_{n+1}(p_j)-\mathcal{G}_n(p_j)=(bs)^{j-n}\mathcal{G}_{j+1}(p_j).
\end{equation*}
Now we write $\mathcal{G}_n(p_j)$ as a telescoping sum to exploit the expression for the increment in $n$ just obtained:
\begin{equation*}
    \mathcal{G}_n(p_j)=\mathcal{G}_{j+1}(p_j)+\sum_{i=j+1}^{n-1}\left[\mathcal{G}_{i+1}(p_j)-\mathcal{G}_i(p_j)\right]=\mathcal{G}_{j+1}(p_j)+\mathcal{G}_{j+1}(p_j)\sum_{i=j+1}^{n-1}(bs)^{j-i},
\end{equation*}
from which the expression for $\mathcal{G}_{j+1}(p_j)$ obtained in Lemma~\ref{lm:gambler G} yields that
\begin{equation}\label{eq:only_now}
    \mathcal{G}_n(p_j)=s^j\dfrac{2b(s-1)}{bs-1}(1-(bs)^{j-n}).
\end{equation}
It remains to compute $\mathcal{G}_n(p_j,p_k)$ for $ k\in\{1,\dots,n-1\},\,k\neq j$. {For $j<k$, use}~\eqref{lm:Green} to get
\begin{equation*}
    \mathcal{G}_n(p_j,p_k)=\mathcal{P}^{p_j}_n\left(\tau_{p_k}<{\tau_\partial}\right)\mathcal{G}_n(p_k).
\end{equation*}
Since $X_{\cdot}$ cannot reach $z$ without passing through $a$ or $p_k$, we compute $\mathcal{P}^{p_j}_n\left(\tau_{p_k}<\tau_a \right)$. Using the projected random walk as done in {Lemma~\ref{lm:gambler G}, it can} be shown that it is equal to $s^{j-k}$. {Hence, by~}\eqref{eq:only_now}
\begin{equation*}
    \mathcal{G}_n(p_j,p_k)=s^j\dfrac{2b(s-1)}{bs-1}(1-(bs)^{n-k}),\quad \forall\,  k>j.\qedhere
\end{equation*}
\end{proof}
Having the values of the Green's function at the pivotal points, we can compute {the absorption probability at $p_j$} for a random walk started at $p_k$ and killed at $\{a,\,z\}$ for $k>j$.
\begin{corollary}\label{cor:absorb}
{For all $j\leq k <n$, it holds} that
\begin{equation*}
    \mathcal{P}^{p_k}_n\left(\tau_{p_j}<{\tau_\partial}\right)=b^{j-k}\dfrac{1-(bs)^{k-n}}{1-(bs)^{j-n}}.
\end{equation*}   
\end{corollary}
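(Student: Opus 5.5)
The plan is to read off the absorption probability from the Green's function values in Proposition~\ref{prop:G_pivotal}, using the last-exit identity~\eqref{lm:Green}. For $j\le k<n$, applying~\eqref{lm:Green} on $G_n$ with $x=p_k$, $y=p_j$ gives
\[
\mathcal{P}^{p_k}_n\left(\tau_{p_j}<\tau_\partial\right)=\frac{\mathcal{G}_n(p_k,p_j)}{\mathcal{G}_n(p_j)} .
\]
The case $j=k$ is trivial: both sides equal $1$. So we may assume $j<k$.

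For the numerator, we need $\mathcal{G}_n(p_k,p_j)$ with first argument $k$ larger than second argument $j$. This is the second case of Proposition~\ref{prop:G_pivotal}, with the roles of the indices swapped. Rather than rely on the printed formula directly, we plan to rederive it via reversibility~\eqref{eq:time rev}:
\[
\mathcal{G}_n(p_k,p_j)=\frac{\deg_n(p_j)}{\deg_n(p_k)}\,\mathcal{G}_n(p_j,p_k)=b^{j-k}\,\mathcal{G}_n(p_j,p_k),
\]
using $\deg_n(p_i)=2b^i$. We then compute $\mathcal{G}_n(p_j,p_k)=\mathcal{P}^{p_j}_n(\tau_{p_k}<\tau_\partial)\,\mathcal{G}_n(p_k)$, again by~\eqref{lm:Green}.

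This step requires some care, because the end of the proof of Proposition~\ref{prop:G_pivotal} appears to carry a typo, $(bs)^{n-k}$ in place of $(bs)^{k-n}$. The correct value comes from two ingredients established there:
\begin{itemize}
\item the projected-walk argument, which gives $\mathcal{P}^{p_j}_n(\tau_{p_k}<\tau_a)=s^{j-k}$, the walk being unable to reach $z$ without passing through $a$ or $p_k$;
\item identity~\eqref{eq:only_now}, which gives $\mathcal{G}_n(p_k)=s^k\frac{2b(s-1)}{bs-1}(1-(bs)^{k-n})$.
\end{itemize}
Combining them yields
\[
\mathcal{G}_n(p_j,p_k)=s^{j}\frac{2b(s-1)}{bs-1}\bigl(1-(bs)^{k-n}\bigr),
\]
and therefore $\mathcal{G}_n(p_k,p_j)=b^{j-k}s^j\frac{2b(s-1)}{bs-1}(1-(bs)^{k-n})$.

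The denominator is given directly by~\eqref{eq:only_now}: $\mathcal{G}_n(p_j)=s^j\frac{2b(s-1)}{bs-1}(1-(bs)^{j-n})$. Dividing, the common factor $s^j\frac{2b(s-1)}{bs-1}$ cancels and we obtain $b^{j-k}\frac{1-(bs)^{k-n}}{1-(bs)^{j-n}}$, as claimed.

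The only real obstacle is bookkeeping. One must apply the correct case of Proposition~\ref{prop:G_pivotal}, keep the index order in $\mathcal{G}_n(\cdot,\cdot)$ consistent with~\eqref{lm:Green}, and take the factor $1-(bs)^{k-n}$ from $\mathcal{G}_n(p_k)$ rather than from the misprinted line.
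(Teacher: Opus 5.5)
Your proposal is correct and follows the paper's own argument: the corollary is stated there as a direct consequence of Proposition~\ref{prop:G_pivotal} and \eqref{lm:Green}, which amounts to dividing $\mathcal{G}_n(p_k,p_j)$ (the case $j>k$ of that proposition, with the indices swapped) by $\mathcal{G}_n(p_j)$. You are also right that the final display in the proof of Proposition~\ref{prop:G_pivotal} contains a misprint, $(bs)^{n-k}$ instead of $(bs)^{k-n}$ as in the statement, and your rederivation of the swapped case via reversibility \eqref{eq:time rev}, the projected-walk hitting probability $s^{j-k}$ and \eqref{eq:only_now} is a sound way to avoid relying on it.
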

In fact, such a result is a direct consequence of Proposition~\ref{prop:G_pivotal} and~\eqref{lm:Green}.

As a consequence of Proposition~\ref{prop:G_pivotal} and Dhar's formula~{\eqref{eq:E[N]}, we also} obtain an expression for {the average} of $N(\cdot,\,\cdot)$.
\begin{corollary}\label{cor:E[N]}{Under $\nu_n$, the expected number of topplings} at $p_k$ in an avalanche {with source $p_j$} is
\begin{equation*}
    \mathbb{E}_n\left[N(p_j,p_k)\right]=\dfrac{s^{j\wedge k}}{b^{j\vee k}}\,\dfrac{b(s-1)}{bs-1}\,\left(1-(bs)^{(j\vee k)-n}\right).
\end{equation*}
\end{corollary}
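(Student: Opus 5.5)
The plan is to combine Dhar's formula~\eqref{eq:E[N]} with the identity~\eqref{eq:Delta-G} relating $\Delta^{-1}$ to the killed Green's function, and then substitute the explicit values from Proposition~\ref{prop:G_pivotal}. Concretely, with $G=G_n$ and $u=p_j$, $v=p_k$ (both in $U_n$, hence nonsink vertices of $G_n$), the two identities give
\begin{equation*}
\mathbb{E}_n\left[N(p_j,p_k)\right]=\left(\Delta^{-1}\right)(p_j,p_k)=\frac{\mathcal{G}_{G_n}(p_j,p_k)}{\deg_{G_n}(p_k)}=\frac{\mathcal{G}_n(p_j,p_k)}{\deg_n(p_k)}.
\end{equation*}
Here I use two facts from the setup. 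First, $\mathcal{G}_{G_n}=\mathcal{G}_n$ on $U_n$, since the walks on $D_n$ and $G_n$ agree before absorption at $\partial$. Second, wiring the boundary does not change degrees of vertices in $U_n$, so $\deg_{G_n}(p_k)=\deg_n(p_k)=2b^k$ for every $k<n$.

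Then I split into the two cases of Proposition~\ref{prop:G_pivotal}. For $j\le k$, dividing $s^j\frac{2b(s-1)}{bs-1}\bigl(1-(bs)^{k-n}\bigr)$ by $2b^k$ gives $\frac{s^j}{b^k}\frac{b(s-1)}{bs-1}\bigl(1-(bs)^{k-n}\bigr)$. This is the claim, since $j\wedge k=j$ and $j\vee k=k$. For $j>k$, dividing $b^{k-j}s^k\frac{2b(s-1)}{bs-1}\bigl(1-(bs)^{j-n}\bigr)$ by $2b^k$ gives $\frac{s^k}{b^j}\frac{b(s-1)}{bs-1}\bigl(1-(bs)^{j-n}\bigr)$, again matching, now with $j\wedge k=k$ and $j\vee k=j$.

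As a consistency check, the resulting expression is symmetric in $(j,k)$, as it must be because $\Delta^{-1}$ is symmetric. This symmetry is equivalent to the reversibility relation~\eqref{eq:time rev} applied to the two cases of Proposition~\ref{prop:G_pivotal}.

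The only delicate point is bookkeeping rather than mathematics. One must use the values of $\mathcal{G}_n(p_j,p_k)$ exactly as stated in Proposition~\ref{prop:G_pivotal}, in particular with the factor $1-(bs)^{k-n}$ when $j<k$; the final display of its proof has the exponent written as $n-k$, which is a typo. The correct factor follows from $\mathcal{G}_n(p_j,p_k)=s^{j-k}\mathcal{G}_n(p_k)$ together with~\eqref{eq:only_now} applied at index $k$, and I would re-derive it this way to be safe. One must also be careful to divide by the degree of the target vertex $p_k$, not the source $p_j$.
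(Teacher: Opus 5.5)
Your proposal is correct and follows essentially the same route as the paper, which obtains the corollary by combining Dhar's formula~\eqref{eq:E[N]} and the identity~\eqref{eq:Delta-G} with Proposition~\ref{prop:G_pivotal}, i.e.\ dividing $\mathcal{G}_n(p_j,p_k)$ by $\deg_n(p_k)=2b^k$ in the two cases $j\le k$ and $j>k$. You are also right that the factor $1-(bs)^{n-k}$ in the last display of that proposition's proof is a typo for $1-(bs)^{k-n}$, as in the proposition's statement.
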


    As shown in~\eqref{eq:density=E[N]}, {the above corollary} yields the {normalisation} factor {when passing from $\mathrm{R}_{G_n}$ to $\mathrm{R}_{G_n^{p_j}}$.} Note that such a factor is uniformly bounded in $n$.

\section{Upper bound for the uniform wave (Theorem~\ref{thm:powerlaw_wave})}\label{sec:UB_uniform} {In this section, we} prove an upper bound on the {power-law} decay for the size of a uniformly chosen wave. By Remark~\ref{rem:link_with_uni_wave} and bijection~\eqref{bij IKP wave}, this is {equivalent to proving} the result for {$\abs{V(T^{p_j})}$ under $\mathbb{P}_n^{p_j}$}. {Moreover, by Lemma}~\ref{lm:UB by tree}, we obtain {an upper bound on the tail of the size} of any wave cluster.

{For the rest of this section, we fix} $n\in \{2,\,3,\,\ldots\}$, $j\in \{1,\dots,n-1\}$ and a choice of $p_j\in V_n$. {The sandpile measure is $\nu_n$ on $G_n$, while the uniform-wave forest at $p_j$ has law $\mathbb{P}_n^{p_j}$, represented by spanning trees of $G_n^{p_j}$.} 
\begin{proof}[Proof of the upper bound of Theorem~\ref{thm:powerlaw_wave}]
    {To obtain an upper bound on} ${\mathbb{P}_n^{p_j}}\left(\abs{V\left(T^{p_j}\right)}>t\right)$ we start by picking specific values of $t$ for which we can use the pivotal points. Define the sequence $(t_k)_{k\in \{j,\,\dots,\,n-2\}}$ as
    \begin{equation}\label{def:t_k}
    t_k:=s\left(\abs{V_k}-1\right)=\frac{bs(s-1)(bs)^k+s(b-1)}{bs-1},
    \end{equation}
    by~\eqref{eq:V and E}.

    For the first part of the proof, fix also $k\in \{j,\dots,n-2\}$. Looking at the schematic situation in Figure~\ref{fig:UB}, the vertices between $a$ and $p_{k+1}$ (shaded area) in the chain containing $p_j$ {number $t_k$} (there are $s$ copies of $D_k$ between $a$ and $p_{k+1}$). Therefore $T^{p_j}$ has more than $t_k$ vertices only if $p_{k+1}\in T^{p_j}$, proving that
    \begin{equation*}        {\mathbb{P}_n^{p_j}}\left(\abs{V(T^{p_j})}> t_k\right)\leq {\mathbb{P}_n^{p_j}}\left(p_{k+1}\in T^{p_j}\right).
    \end{equation*}
    
    \begin{figure}[htbp]
  \begin{tikzpicture}[line width=.85pt,line cap=round,line join=round,
 every node/.style={font=\Large},
 shaded/.style={pattern=north east lines,pattern color=black!55}]
\coordinate (A) at (0,0);
\coordinate (Pj) at (1.4,1.15);
\coordinate (Pnext) at (3.5,1.45);
\coordinate (Q) at (2.15,0);
\coordinate (R) at (6.8,.45);
\coordinate (S) at (3.25,-2.05);
\coordinate (B) at (12,.45);
\coordinate (T) at (6.8,-5);

% Four small lenses in the left-hand subgraph.
\draw[shaded] (A) .. controls (-.05,.85) and (.65,1.2) .. (Pj)
 .. controls (1.35,.55) and (.5,.3) .. (A) -- cycle;
\draw[shaded] (Pj) .. controls (1.9,2.0) and (3.0,2.1) .. (Pnext)
 .. controls (3.0,.8) and (1.9,.8) .. (Pj) -- cycle;
\draw[shaded] (A) .. controls (.65,.22) and (1.55,.22) .. (Q)
 .. controls (1.6,-.5) and (.6,-.5) .. (A) -- cycle;
\draw[shaded] (Q) .. controls (2.4,.7) and (3.1,1.05) .. (Pnext)
 .. controls (3.55,.7) and (2.8,-.12) .. (Q) -- cycle;

% The next shaded lens.
\draw[shaded] (Pnext) .. controls (4.8,2.4) and (6.3,2.15) .. (R)
 .. controls (5.6,-.25) and (3.9,-.35) .. (Pnext) -- cycle;

% Lower two lenses of the middle subgraph.
\draw (A) .. controls (.75,-.65) and (2.85,-1.0) .. (S);
\draw (A) .. controls (.2,-2.05) and (1.8,-2.6) .. (S);
\draw (S) .. controls (4.2,-1.1) and (5.8,-.35) .. (R);
\draw (S) .. controls (5.2,-2.35) and (6.65,-1.35) .. (R);

% Three large outer lenses.
\draw (R) .. controls (7.8,2.8) and (10.8,2.2) .. (B);
\draw (R) .. controls (6.95,-1.75) and (10.65,-2.0) .. (B);
\draw (A) .. controls (0,-4.0) and (4.0,-2.9) .. (T);
\draw (A) .. controls (-.2,-4.3) and (3.8,-5.0) .. (T);
\draw (T) .. controls (8.1,-3.35) and (10.9,-2.2) .. (B);
\draw (T) .. controls (10.6,-4.75) and (12.0,-2.5) .. (B);

% Marked vertices and labels (the top equation is intentionally omitted).
\foreach \p in {A,Pj,Pnext,Q,R,S,B,T}
  \fill (\p) circle[radius=1.6pt];
\node[left=10pt] at (A) {$a$};
\node[right=10pt] at (B) {$z$};
\node[above left=6pt] at (Pj) {$p_j$};
\node[above=12pt] at (Pnext) {$p_{j+1}$};
\node[above=41pt] at (R) {$p_{k+1}(=p_{j+2})$};
\node[fill=white,inner sep=1pt,font=\normalsize] at (.60,.68) {$D_j$};
\node[fill=white,inner sep=3pt] at (5.15,.85) {$D_{j+1}$};
\end{tikzpicture}
    \caption{Sketch for the proof of the upper bound of Theorem~\ref{thm:powerlaw_wave}.}
    \label{fig:UB}
    \end{figure}   

    {By Proposition~\ref{prop:GK}, the above} {right-hand} side can be translated {in terms of a random walk as} ${\mathcal{P}^{p_{k+1}}_n}\left(\tau_{p_j}<{\tau_\partial}\right)$. Plugging in the value computed in {Corollary~\ref{cor:absorb}, we obtain} the following upper bound:
    \begin{equation}\label{eq:bound_t_k}
        {\mathbb{P}_n^{p_j}}\left(\abs{V\left(T^{p_j}\right)}>t_k\right)\leq b^{j-(k+1)}\dfrac{1-(bs)^{k+1-n}}{1-(bs)^{j-n}}<b^{j-(k+1)}. 
    \end{equation}
    Note that the above upper bound is uniform in $n$.

    The second part of the proof extends the tail bound outside the range of {the $t_k$ values; therefore, fix} $t$ such that $t_j\leq t<s\left(\abs{V_n}-1\right)$.  We need to find a function $f:t\mapsto f(t)$ such that
    \begin{equation}
      \label{eq:argmax}  
    \lfloor f(t) \rfloor=\argmax_{\ell\in\{j,\dots,n-1\}}\{t_\ell\leq t\}.
    \end{equation}
    {For all $\ell\in \{j,\,\dots,\,n-1 \}$, we have}
    \begin{equation*}
        t\geq s\left(\abs{V_\ell}-1\right)\iff \ell\leq \log_{bs}\left(\dfrac{bs-1}{bs(b-1)}t-\dfrac{b-1}{b(s-1)}\right),
    \end{equation*}
    {so}
        \begin{align}{\label{def:f}}
       f(t):=&\log_{bs}\left(K_1t-K_2\right)     
    \end{align} 
   achieves~\eqref{eq:argmax}, having defined    
    \begin{align*}
        K_1:=\dfrac{bs-1}{bs(s-1)}&, \, K_2:=\dfrac{b-1}{b(s-1)}.
    \end{align*}        
    Now one can see the {power-law} decay with exponent $\alpha=1/\log_b(bs)$ as stated in Theorem~\ref{thm:powerlaw_wave}. Use~\eqref{eq:bound_t_k} to get that
    \begin{align}\label{eq:extension}
        {\mathbb{P}_n^{p_j}}\left(\abs{V\left(T^{p_j}\right)}>t\right)&<b^{j-\left(\floor{f(t)}+1\right)}\leq b^j\,b^{-f(t)}=b^j\, \left(K_1t-K_2\right)^{-\alpha}\nonumber\\
        &\leq b^j\,t^{-\alpha}\left(\dfrac{K_1t_j-K_2}{t_j}\right)^{-\alpha}=b^j\,t^{-\alpha}\, \left(\dfrac{(bs)^j}{t_j}\right)^{-\alpha},
    \end{align}
    where we used that $\left(K_1t-K_2\right)/t$ is an increasing function for positive $t$ and that $K_1t_j-K_2=(bs)^j$ by the definition of $f$.

    {Hence, introducing the constant}
    \begin{equation*}
        {K}_u:=b^j\left(\dfrac{bs(s-1)}{bs-1}+\dfrac{s(b-1)}{(bs)^j(bs-1)}\right)^{\alpha},
    \end{equation*}
    {derived from~\eqref{def:t_k}, we have} proved an upper bound for the tail for $t_j\leq t<s\left(\abs{V_n}-1\right)$ as follows:
    \begin{equation}\label{eq:uw_ub}
        {\mathbb{P}_n^{p_j}}\left(\abs{V\left(T^{p_j}\right)}>t\right)<{K}_u\,t^{-\alpha}.
    \end{equation}
\end{proof}
As stated in {the introduction to the current section}, {the upper bound on the tail} of the size of any wave can be obtained as a corollary of Lemma~\ref{lm:UB by tree} together with~\eqref{eq:uw_ub}:
\begin{corollary}\label{cor:w_ub}
For any $i\in \mathbb{N}_+$ and $t\in \left[s\left(\abs{V_j}-1\right),\,s\left(\abs{V_n}-1\right)\right)$ it holds that
    \begin{equation*}
        {\nu_n}\left(\abs{W^{p_j}_i}>t\right)<\left(\dfrac{s}{b}\right)^{j} {K}_u\,t^{-\alpha},
    \end{equation*}
    with the convention $W^{p_j}_i=\emptyset$ if $i>N(p_j)$.
\end{corollary}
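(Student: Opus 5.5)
The plan is to combine the upper bound of Lemma~\ref{lm:UB by tree}, in the form for the events $\{|W^x_\ell|>t\}$ allowed by Remark~\ref{rem:larger_t}, with the uniform-wave tail bound~\eqref{eq:uw_ub}. The normalisation factor is then computed exactly through~\eqref{eq:density=E[N]} and Corollary~\ref{cor:E[N]}.

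First, take $G=G_n$ and $x=p_j$ in~\eqref{eq:UB_l_wave}, with the event $\{W^{p_j}_i\nsubset V'\}$ replaced by $\{|W^{p_j}_i|>t\}$. For every $i\in\mathbb{N}_+$ this gives
\begin{equation*}
\nu_n\left(\abs{W^{p_j}_i}>t\right)\leq \frac{\abs{\mathrm{R}_{G_n^{p_j}}}}{\abs{\mathrm{R}_{G_n}}}\,\mathbb{P}_n^{p_j}\left(\abs{V\left(T^{p_j}\right)}>t\right).
\end{equation*}
For $i>N(p_j)$ the left-hand side concerns the empty set, so it vanishes. This is consistent with the convention in the statement.

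Second, identify the prefactor. By~\eqref{eq:density=E[N]} it equals $\mathbb{E}_n[N(p_j)]=\mathbb{E}_n[N(p_j,p_j)]$. Corollary~\ref{cor:E[N]} with $k=j$ evaluates this as
\begin{equation*}
\frac{s^j}{b^j}\,\frac{b(s-1)}{bs-1}\left(1-(bs)^{j-n}\right).
\end{equation*}
Since $b(s-1)<bs-1$ (equivalently $b>1$) and $0<1-(bs)^{j-n}<1$, this is strictly less than $(s/b)^j$.

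Third, for $t$ in the stated range $[t_j,\,s(\abs{V_n}-1))$, insert~\eqref{eq:uw_ub}, which bounds $\mathbb{P}_n^{p_j}(|V(T^{p_j})|>t)$ by $K_u t^{-\alpha}$. Multiplying the two estimates gives the claim, with strict inequality.

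The only point needing care is whether Remark~\ref{rem:larger_t} really covers the $\ell$-th wave bound~\eqref{eq:wi<w1} for size events. It does: the argument uses only that $\mathcal{A}^{\ell-1}$ is a bijection of $\mathrm{R}_{G^x}$ and that the wave equals $\{x\}\cup\mathcal{T}(\cdot)$. Hence any event defined through the toppled set transfers, in particular an event on its cardinality. One should also note that $\abs{W^{p_j}_i}$ counts $p_j$ itself, matching $V(T^{p_j})$, which contains the root. With that confirmed, the remaining steps are direct substitutions.
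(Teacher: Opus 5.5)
Your proposal is correct and follows the paper's proof: you apply the $\ell$-th wave upper bound of Lemma~\ref{lm:UB by tree} to size events via Remark~\ref{rem:larger_t}, identify the prefactor as $\mathbb{E}_n[N(p_j)]<(s/b)^j$ through~\eqref{eq:density=E[N]} and Corollary~\ref{cor:E[N]}, and conclude with~\eqref{eq:uw_ub}. Your additional checks, namely that $b(s-1)<bs-1$ and that the transfer in~\eqref{eq:wi<w1} uses only the bijectivity of $\mathcal{A}^{\ell-1}$ on $\mathrm{R}_{G^x}$, fill in details the paper leaves implicit.
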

\begin{proof}
     Fix $t\in \left[s\left(\abs{V_j}-1\right),\,s\left(\abs{V_n}-1\right)\right)$ and $i\in \mathbb{N}_+$.
     Using Remark~\ref{rem:larger_t} and~{\eqref{eq:E[N]}, we get}
    \begin{equation*}
        {\nu_n}\left(\abs{W^{p_j}_i}>t\right)\leq \mathbb{E}_{{\nu_n}}\left[N(p_j)\right]{\mathbb{P}_n^{p_j}}\left(\abs{V\left(T^{p_j}\right)}>t\right).
    \end{equation*}
    Using the expression provided by {Corollary~\ref{cor:E[N]}, we can} show that $\mathbb{E}_{{\nu_n}}\left[N(p_j)\right]$ {is bounded above by} $(s/b)^j$. {Hence, combining this fact with the upper bound for the uniform wave}~\eqref{eq:uw_ub} yields the expression in the claim: 
    \begin{equation*}
        {\nu_n}\left(\abs{W^{p_j}_i}>t\right)<\left(\dfrac{s}{b}\right)^{j} {K}_u\,t^{-\alpha}.\qedhere
    \end{equation*}
\end{proof}

\section{Lower bound for the uniform wave (Theorem~\ref{thm:powerlaw_wave})}\label{sec:LB_uniform}
{In this section, we} prove the lower bound for the {power-law} decay of the uniform wave. {This, in particular, will imply} the lower bound for the first-wave, which is enough to prove the analogous result for the avalanche.

{We retain the setting of Section~\ref{sec:UB_uniform}: the source is $p_j\in U_n$, the sandpile measure is $\nu_n$, and the uniform-wave forest has law $\mathbb{P}_n^{p_j}$.}

\begin{proof}[Proof of the lower bound of Theorem~\ref{thm:powerlaw_wave}]
Recall the collection $t_k=s\left(\abs{V_k}-1\right) $ for $k\in \{j,\dots,n-3\}$ used to prove the upper bound. We will proceed, as in that proof, by first showing the {power-law} behaviour for all $t_k$'s and then for general admissible values of $t$. Therefore, fix now $t_k$. We argue that
\begin{equation}\label{ineq:LB2}
    {\mathbb{P}_n^{p_j}}\left(\abs{V\left(T^{p_j}\right)}> t_k\right){\geq} {\mathbb{P}_n^{p_j}}\left(p_{k+2}\in T^{p_j}\right).
\end{equation}
{We now prove this claim.} {Each pair of consecutive pivotal points $p_\ell$ and $p_{\ell+1}$ forms the endpoints} of $s-1$ copies of $D_\ell$. {Moreover, if the forest on $G_n$ associated with a tree of $G_n^{p_j}$ realises $\{p_{k+2}\in T^{p_j}\}$, then its path from $p_j$ to $p_{k+2}$} passes through $p_k$ and $p_{k+1}$. Hence all the vertices in the $(s-1)$ copies of $D_{k+1}$ between $p_{k+1}$ and $p_{k+2}$ and the $(s-1)$ copies of $D_k$ between $p_{k}$ and $p_{k+1}$ are in $T^{p_j}$ ({for a graphical representation, see} Figure~\ref{fig:LB}). Therefore
\begin{equation*}
    {\mathbb{P}_n^{p_j}}\left(\abs{V\left(T^{p_j}\right)}\geq (s-1)(\abs{V_{k}}+\abs{V_{k+1}}-2)\right)\geq {\mathbb{P}_n^{p_j}}\left(p_{k+2}\in T^{p_j}\right).
\end{equation*}
Since $(s-1)(\abs{V_k}+\abs{V_{k+1}}-2)>t_k$, this shows the validity of inequality~\eqref{ineq:LB2}.
\begin{figure}[htbp]
  \begin{tikzpicture}[line width=.9pt,line cap=round,line join=round,
 every node/.style={font=\large},
 wave/.style={graphpurple,line width=1pt,decorate,
 decoration={snake,amplitude=1.4pt,segment length=9pt}},
 hint/.style={dash pattern=on 4pt off 4pt,line width=.7pt},
 arrow/.style={-{Stealth[length=2.5mm]},line width=.8pt}]
\coordinate (A) at (0,0);
\coordinate (J) at (1.65,1.15);
\coordinate (U) at (1.9,2.15);
\coordinate (V) at (3.35,2.55);
\coordinate (K) at (5.3,2.8);
\coordinate (L) at (3.2,-.05);
\coordinate (M) at (4.65,1.15);
\coordinate (R) at (7.8,2.4);
\coordinate (Z) at (10.4,2.0);
\coordinate (S) at (3.3,-1.15);
\coordinate (T) at (6.3,-.7);

% Green subgraph.
\draw[graphgreen] (A) .. controls (.05,1.4) and (.85,2.2) .. (U)
 .. controls (1.95,-.25) and (1.0,-.35) .. (A);
\draw[graphgreen] (U) to[bend left=24] (V) to[bend left=24] (U);
\draw[graphgreen] (V) to[bend left=23] (K) to[bend left=23] (V);
\draw[graphgreen] (A) .. controls (1.1,-.55) and (2.4,.2) .. (L)
 .. controls (2.35,-.8) and (.8,-.65) .. (A);
\draw[graphgreen] (L) .. controls (3.25,.7) and (4.0,1.3) .. (M)
 .. controls (4.55,.55) and (3.8,.0) .. (L);
\draw[graphgreen] (M) .. controls (4.55,1.85) and (4.95,2.6) .. (K)
 .. controls (5.55,2.1) and (4.95,1.35) .. (M);

% Blue lens ending at p_j inside the left green lens.
\draw[graphblue] (A) .. controls (.3,.65) and (1.1,1.2) .. (J)
 .. controls (1.45,.3) and (.5,-.15) .. (A);

% Red upper chain.
\draw[graphred] (K) to[bend left=22] (R) to[bend left=22] (K);
\draw[graphred] (R) to[bend left=30] (Z) to[bend left=30] (R);

% Red lower chain.
\draw[graphred] (A) .. controls (1.0,-1.05) and (2.8,-.55) .. (S)
 .. controls (2.5,-1.75) and (.45,-1.25) .. (A);
\draw[graphred] (S) .. controls (4.2,-.45) and (5.5,-.1) .. (T)
 .. controls (5.5,-1.35) and (4.1,-1.55) .. (S);
\draw[graphred] (T) .. controls (7.4,.4) and (9.3,1.35) .. (Z)
 .. controls (9.2,.05) and (7.8,-.6) .. (T);

% Wavy highlighted path across the upper subgraphs.
\draw[wave] (U) -- (V);
\draw[wave] (V) -- (K);
\draw[wave] (K) -- (R);
\draw[wave] (R) -- (Z);

% Vertices and retained labels.
\foreach \p in {A,J,U,V,K,L,M}{\fill[graphgreen] (\p) circle[radius=1.3pt];}
\foreach \p in {R,Z,S,T}{\fill[graphred] (\p) circle[radius=1.5pt];}
\fill[graphblue] (J) circle[radius=1.4pt];
\node[left=8pt,graphblue] at (A) {$a$};
\node[graphblue] at (1.40,1.40) {$p_j$};
\node[above=6pt] at (K) {$p_{k+1}$};
\node[above right=5pt] at (Z) {$p_{k+2}$};
\node[graphblue,align=left,anchor=west] at (-.8,3.25) {$b=2$\\$j=3$};
\node[graphgreen] at (2.8,3.7) {$D_k$};
\draw[graphgreen,arrow] (2.5,3.4) to[bend right=12] (2.15,2.55);
\draw[graphgreen,arrow] (3.05,3.4) to[bend left=12] (3.85,3.05);
\node[above left=5pt,graphgreen] at (U) {$p_k$};

\node[graphred] at (7.5,4.25) {$D_{k+1}$};
\draw[graphred,arrow] (6.95,4.0) to[bend right=15] (6.45,3.2);
\draw[graphred,arrow] (8.0,4.0) to[bend left=12] (8.6,3.15);
\end{tikzpicture}
    \caption{Sketch for the proof of the {lower bound in Theorem}~\ref{thm:powerlaw_wave}.}
    \label{fig:LB}
\end{figure}

Proposition~\ref{prop:GK}  and {Corollary~\ref{cor:absorb} yield the expression for} the {right-hand} side of~\eqref{ineq:LB2}, {proving that}
\begin{equation}\label{eq:LB utility}
    {\mathbb{P}_n^{p_j}}\left(\abs{V\left(T^{p_j}\right)}>t_k\right)\geq b^{j-(k+2)}\dfrac{1-(bs)^{k+2-n}}{1-(bs)^{j-n}}\geq b^{j-(k+2)}\dfrac{1-(bs)^{-1}}{1-(bs)^{j-n}}>b^{j-(k+2)}\dfrac{bs-1}{bs}.
\end{equation}

We turn our attention now to a general $t$ such that $t_j<t \leq t_{n-3}$. It is easy to prove that $\lceil f(t)\rceil$, where $f(t)$ is defined in~\eqref{def:f}, is such that
\[
\lceil f(t)\rceil=\argmin_{\ell\in\{j+1,\dots,n-3\}}\{t_{\ell-1}<t\leq t_{\ell}\}.
\]
Thus, applying the lower bound~\eqref{eq:LB utility} for the tail of the wave at $t_{\lceil f(t)\rceil}$, which is at least as big as $t$, we get that
\begin{align*}   {\mathbb{P}_n^{p_j}}\left(\abs{V\left(T^{p_j}\right)}>t\right)&> \dfrac{bs-1}{bs}\,b^{j-(\lceil{f(t)}\rceil +2)}\geq\dfrac{bs-1}{b s}b^{-3}\, b^{-f(t)}\geq \dfrac{bs-1}{b^4 s}\, t^{-\alpha}\left(\dfrac{K_1t_{n-3}-K_2}{t_{n-3}}\right)^{-\alpha}\\
    &=\dfrac{bs-1}{b^4s}\,t^{-\alpha}\left(\dfrac{(bs)^{n-3}}{t_{n-3}}\right)^{-\alpha}=\dfrac{bs-1}{b^4 s}\,t^{-\alpha}\left(\dfrac{bs(s-1)}{bs-1}+(bs)^{-n+3}\dfrac{(b-1)s}{bs-1}\right)^{\alpha}\\
    &\geq \dfrac{bs-1}{b^4 s}\,t^{-\alpha}\left(\dfrac{bs(s-1)}{bs-1}\right)^{\alpha}.
\end{align*}
Again we employed the fact that $(K_1 t-K_2)/t$ is an increasing function, and the definition~\eqref{def:t_k} for $t_{n-3}$.
{Hence, introducing the constant}
\begin{equation*}
    {K}_\ell:=\dfrac{bs-1}{b^4 s}\left(\dfrac{bs(s-1)}{bs-1}\right)^{\alpha},
\end{equation*}
we have proved the desired lower bound for $t_j< t\leq t_{n-3}$:
\begin{equation}\label{eq:uw_lb}
    {\mathbb{P}_n^{p_j}}\left(\abs{V\left(T^{p_j}\right)}>t\right)> {K}_\ell\, t^{-\alpha }.
\end{equation}
\end{proof}
Now, proceeding as in Corollary~\ref{cor:w_ub}, we can use the lower bound for the uniform wave~\eqref{eq:uw_lb} together with the lower bound in~\eqref{eq:lm1} to prove the lower bound for the size of the first-wave.
\begin{corollary}\label{cor:lb_wave}
    For any $t\in \left[s\left(\abs{V_j}-1\right),\,s\left(\abs{V_{n-3}}-1\right)\right]$ it holds that
    \begin{equation*}
        {\nu_n}\left(\abs{W^{p_j}_1}>t\right)>\dfrac{s^{j-1}(s-1)}{Cb^j}\,K_{\ell}\,t^{-\alpha}.
    \end{equation*}
    \begin{proof}
        The lower bound of Remark~\ref{rem:larger_t} {also holds for the event} $\{\abs{W^x_1}>t\}$, which is translated in terms of spanning trees as $\{\abs{V\left(T^x\right)}>t\}$ through the bijection~\eqref{bij IKP wave}. {Applying the comparison lemma with $G=G_n$ and $x=p_j$, we have} the following upper bound on the constant $C$ defined in~\eqref{eq:stima_C}:
        \begin{equation}\label{eq:C_ineq}
            C<\left(2b^{(j-1)}\right)^{s^j-1}.
        \end{equation}
       {Hence, the lower bound} of~\eqref{eq:lm1} yields:
        \begin{equation*} {\nu_n}\left(\abs{W^{p_j}_1}>t\right)> \dfrac{\mathbb{E}_{{\nu_n}}\left[N(p_j)\right]}{C}\,{\mathbb{P}_n^{p_j}}\left(\abs{V\left(T^{p_j}\right)}>t\right).
        \end{equation*}
        Using the expression in Corollary~\ref{cor:E[N]} and the lower bound on the uniform wave~\eqref{eq:uw_lb}, we get
        \begin{equation*}          {\nu_n}\left(\abs{W^{p_j}_1}>t\right)>\left(\dfrac{s}{b}\right)^j\dfrac{1}{C}\left(1-\dfrac{b-1}{bs-1}\right)\left(1-(bs)^{j-n}\right)\,K_\ell\,t^{-\alpha}>\dfrac{s^{j-1}(s-1)}{b^jC}\,K_\ell\,t^{-\alpha},
        \end{equation*}
        proving the {power-law lower bound for the first-wave}.
    \end{proof}
\end{corollary}

\section{{Power-law} for the avalanche (Theorem~\ref{thm:powerlaw})}\label{sec:proofAv}
{In this section, we} prove our main result, Theorem~\ref{thm:powerlaw}. {As in the previous sections}, fix $n\in \{2,\,3,\,\dots\},\, j\in \{1,\,\dots ,\,n-1\}$ {and a choice of $p_j\in V_n$}. 

Of course, the lower bound requires no work at all at this stage, since $W^{p_j}_1\subset Av^{p_j}$ and we have the result for $W^{p_j}_1$ by Corollary~\ref{cor:lb_wave}.
As we are about to see, the upper bound is a consequence of the decay of $\mathbb{E}_{{\nu_n}}[N(p_j,\,p_k)]$ in $k$, as displayed in Corollary~\ref{cor:E[N]}.

\begin{proof}[Proof of Theorem~\ref{thm:powerlaw}, upper bound]
    Let $k\in \{j,\ldots,\,n-2\}$. {As in the proof for the first-wave}, we use the fact that {having more than} $t_{k-1}=s\left(\abs{V_{k-1}}-1\right)$ vertices in $Av^{p_j}$ implies that $p_k\in Av^{p_j}$, hence we need to bound ${\nu_n}\left(p_k\in Av^{p_j}\right)$. We rephrase the event in terms of the toppling number, whose expectation has been computed in Corollary~\ref{cor:E[N]}, and then {use Markov's inequality:}
    \begin{equation}\label{ineq:AvUBpivotal}
        {\nu_n}\left(p_k\in Av^{p_j}\right)={\nu_n}\left(N(p_j,\,p_k)\geq 1\right)\leq \mathbb{E}_{{\nu_n}}\left[N(p_j,\,p_k)\right]<b^{-k} \dfrac{s^jb(s-1)}{bs-1}.
    \end{equation}
   {Hence, for each $k\in \{j,\dots,n-2\}$} we have shown that
   \begin{equation*}
       {\nu_n}\left(\abs{Av^{p_j}}> t_k\right)\leq{\nu_n}\left(p_{k+1}\in Av^{p_j}\right)<b^{-(k+1)} \dfrac{s^jb(s-1)}{bs-1}.
   \end{equation*}
   Now one can {proceed as in the proof} of the upper bound for the size of the uniform wave in Section~\ref{sec:UB_uniform} to extend the result to each $t\in \left[t_j,\, t_n\right)$. Recalling the definition of $f(t)$ given in~{\eqref{def:f}, we finally} obtain that
   \begin{equation*}       {\nu_n}\left(\abs{Av^{p_j}}> t\right)\leq {\nu_n}\left(\abs{Av^{p_j}}> t_{\floor{f(t)}}\right)< b^{-\left(\floor{f(t)}+1\right)}\dfrac{s^jb(s-1)}{bs-1}\leq  \overline{K}\,t^{-\alpha},
   \end{equation*}
   where the constant $\overline{K}$ is obtained by proceeding in a similar way to~\eqref{eq:extension}, which leads to the following definition:
   \begin{equation*}
       \overline{K}:=s^j\dfrac{b(s-1)}{bs-1}\left(\dfrac{bs(s-1)}{bs-1}+\dfrac{s(b-1)}{(bs)^j(bs-1)}\right)^{\alpha}.
   \end{equation*}
   This proves the upper bound shown in Theorem~\ref{thm:powerlaw}.
\end{proof}

\section{Pivotal reach limit (Proposition~\ref{prop:law_R'})}\label{sec:R'}
{Throughout this section, fix $j\in\mathbb{N}_+$. For $n>j$, retain $\nu_n=\nu_{G_n}$ and $\mathbb{P}_n^{p_j}=\mathbb{P}_{G_n^{p_j}}$.}

{In this section, we} measure avalanches by counting how many pivotal points they may contain, and show that this counter is finite almost surely in the limit $n\to\infty$. 

Define the collection of random variables $\left(R_n\right)_{n>j}$ as
\begin{equation}\label{def:R_n'}
    R_n:=\max\left(\left\{k:j\leq k<n,\,p_k\in Av^{p_j}\right\}\cup \{0\}\right).
\end{equation}
We prove Proposition~\ref{prop:R_intro_res} in two steps. Firstly, we show tightness for the collection $\left(R_n\right)_{n>j}$.

\begin{lemma}[Tightness]\label{tightness'}
    The collection of random variables $(R_n)_{n>j}$ is tight. 
\end{lemma}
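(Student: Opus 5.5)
The plan is to bound the tail $\nu_n(R_n\geq m)$ uniformly in $n$ and show that it tends to $0$ as $m\to\infty$. The key inclusion is $\{R_n\geq m\}\subset\bigcup_{k=m}^{n-1}\{p_k\in Av^{p_j}\}$ for $m\geq j$. The definition of $R_n$ as a maximum gives this directly: $R_n\geq m$ means some $k\in\{m,\dots,n-1\}$ has $p_k\in Av^{p_j}$.

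The bound can be sharpened with the geometry. Any toppling path from $p_j$ to $p_k$ with $k>j$ must pass through $p_{k-1}$, since $p_{k-1}$ separates $a$-side copies from the rest of the chain within the relevant branch. This is the fact already used in Section~\ref{sec:LB_uniform}. It yields $\{R_n\geq m\}=\{p_m\in Av^{p_j}\}$, but this is not needed for the argument. A crude union bound suffices, because the terms decay geometrically in $k$.

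For each $k$ we invoke \eqref{ineq:AvUBpivotal}, that is, Markov's inequality combined with Corollary~\ref{cor:E[N]}:
\begin{equation*}
\nu_n\left(p_k\in Av^{p_j}\right)\leq \mathbb{E}_n\left[N(p_j,p_k)\right]<b^{-k}\,\frac{s^j b(s-1)}{bs-1}.
\end{equation*}
This holds for all $j\leq k<n$; at $k=n-1$ we use the same formula from Corollary~\ref{cor:E[N]}, which is valid there. Summing over $k\geq m$ gives
\begin{equation*}
\nu_n(R_n\geq m)\leq \frac{s^j b(s-1)}{bs-1}\sum_{k=m}^{\infty}b^{-k}=\frac{s^j b^{2}(s-1)}{(bs-1)(b-1)}\,b^{-m}.
\end{equation*}
The right-hand side does not depend on $n$ and tends to $0$ as $m\to\infty$. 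We also have $R_n\geq 0$, since the value $0$ is included by convention. Hence for every $\varepsilon>0$ we can choose $m$ with $\sup_{n>j}\nu_n(R_n\geq m)<\varepsilon$, which is tightness of $(R_n)_{n>j}$ on $\mathbb{N}$.

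There is essentially no obstacle. The only point to verify is that the Green's function bound from Corollary~\ref{cor:E[N]} is uniform in $n$. This is clear because the factor $1-(bs)^{k-n}$ is at most $1$. One should also check that $R_n$ is defined under $\nu_n$ on $G_n$ for every $n>j$, which holds because the pivotal points $p_k$, $k<n$, are fixed once the branch is chosen. Prokhorov's theorem then supplies the subsequential limit used in Proposition~\ref{prop:R_intro_res}.
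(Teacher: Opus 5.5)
Your proof is correct and is essentially the paper's argument: Markov's inequality with Corollary~\ref{cor:E[N]}, as in \eqref{ineq:AvUBpivotal}, gives a bound of order $b^{-k}$ that is uniform in $n$. The only difference is that the paper uses the identity $\{R_n\geq \ell\}=\{p_\ell\in Av^{p_j}\}$, which rests on the separation and connectivity fact you sketch, while your union bound over $k\geq m$ avoids that fact at the cost of a harmless extra factor $b/(b-1)$.
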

\begin{proof}
    {Fix $\epsilon>0$; we look} for a positive integer $\ell\geq j$ such that
    \begin{equation*}
        \sup_{n>j}\nu_n\left(R_n\geq \ell\right){\leq} \epsilon.
    \end{equation*}
    The event $\{R_n\geq \ell\}$ happens if and only if $\{p_\ell\in Av^{p_j}\}$. Hence we can use the upper bound proved in~\eqref{ineq:AvUBpivotal}, which happens to be uniform in $n$:
    \begin{equation*}        \nu_n\left(R_n\geq \ell\right)=\nu_n\left(p_\ell\in Av^{p_j}\right)< b^{-\ell} \dfrac{s^jb(s-1)}{bs-1}, 
    \end{equation*}
    proving the tightness condition by taking $\ell$ large enough.
\end{proof}
Since $(R_n)_{n>j}$ is tight, there exists a subsequence $(R_{n_\ell})_{\ell\geq 0}$ that converges in distribution to a random variable that we will denote by $R$. Its law will be denoted by $\nu$. 
\begin{prop}[Law of $R$]\label{prop:law_R'}
    $R$ is {an almost surely finite, non-trivial (i.e. nonconstant) random variable}.
\end{prop}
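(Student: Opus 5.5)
Almost sure finiteness follows directly from the uniform tail bound used in Lemma~\ref{tightness'}. The events $\{R_n\ge \ell\}$ depend only on the integer-valued variable $R_n$, and $[\ell,\infty)$ is closed. By the Portmanteau theorem along the subsequence $(n_m)$ we get
\[
\nu\left(R\ge \ell\right)\le \limsup_{m\to\infty}\nu_{n_m}\left(R_{n_m}\ge \ell\right)\le b^{-\ell}\,\frac{s^j b(s-1)}{bs-1}.
\]
The right-hand side comes from~\eqref{ineq:AvUBpivotal} and is uniform in $n$. Letting $\ell\to\infty$ gives $\nu(R=\infty)=0$. Since the $R_n$ are integer valued, convergence in law is equivalent to convergence of the point masses, so $R$ takes values in $\{0\}\cup\{j,j+1,\dots\}$.

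For non-triviality, I would show that $R$ puts positive mass on two distinct values, namely $\{R=0\}$ and $\{R\ge j+1\}$.

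\textbf{The value $0$.} By definition, $R_n=0$ exactly when $p_j\notin Av^{p_j}$, that is, when $h_{p_j}<\deg(p_j)$ and no toppling occurs. Then
\[
\nu_n(R_n=0)=1-\nu_n\left(h_{p_j}=\deg_n(p_j)\right).
\]
I would bound $\nu_n(h_{p_j}=\deg_n(p_j))$ away from $1$ uniformly in $n$. The cleanest route is through the bijection~\eqref{bij IKP}: this probability equals $|E^{p_j}|/|\mathrm{R}_{G_n}|$. This is at most $|\mathrm{R}_{G_n^{p_j}}|/|\mathrm{R}_{G_n}|=\mathbb{E}_n[N(p_j)]$, which by Corollary~\ref{cor:E[N]} equals
\[
\frac{s^j}{b^j}\,\frac{b(s-1)}{bs-1}\left(1-(bs)^{j-n}\right).
\]
Unfortunately this bound need not be below $1$. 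So I would instead argue directly. Lowering the height at $p_j$ by one preserves recurrence for a positive fraction of configurations, with the fraction bounded below by a constant depending only on $\deg(p_j)=2b^j$ and the local structure. Concretely, configurations that have $h_{p_j}=\deg(p_j)$ together with maximal heights on the geodesic from $\partial$ to $p_j$ stay recurrent after lowering $h_{p_j}$. Applying the lifting trick of Lemma~\ref{lm:UB by tree} gives an injection with multiplicity at most $C$ as in~\eqref{eq:stima_C}. Together these give $\nu_n(h_{p_j}<\deg)\ge c>0$ uniformly in $n$, which I expect to be the main technical obstacle. Passing to the limit then gives $\nu(R=0)\ge c$.

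\textbf{The event $R\ge j+1$.} Since $W_1^{p_j}\subset Av^{p_j}$, we have $\nu_n(R_n\ge j+1)\ge \nu_n(p_{j+1}\in W_1^{p_j})$. By the lower bound in~\eqref{eq:lm1}, with $V'=V_n\setminus\{p_{j+1}\}$, this is at least
\[
\frac{\mathbb{E}_n[N(p_j)]}{C}\,\mathbb{P}_n^{p_j}\left(p_{j+1}\in T^{p_j}\right)=\frac{\mathbb{E}_n[N(p_j)]}{C}\,\mathcal{P}_n^{p_{j+1}}\left(\tau_{p_j}<\tau_\partial\right),
\]
where the equality is Proposition~\ref{prop:GK}. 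By Corollaries~\ref{cor:absorb} and~\ref{cor:E[N]} this is bounded below by a positive constant independent of $n$, as in Corollary~\ref{cor:lb_wave}. The set $\{j+1,j+2,\dots\}$ is both open and closed in the integers, so the limit gives $\nu(R\ge j+1)>0$. With both values charged, $R$ is nonconstant.
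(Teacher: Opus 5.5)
There is a gap in your step for the value $0$. Your finiteness argument and your lower bound on $\nu(R\ge j+1)$ are fine and match the paper.

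\textbf{The gap.} Your concrete claim is false. It says that a recurrent configuration with $h_{p_j}=\deg_n(p_j)$ and maximal heights on \emph{one} geodesic $\gamma$ from $a$ to $p_j$ stays recurrent after $h_{p_j}$ is lowered by one. Once $\gamma$ has burnt, $p_j$ is guaranteed only one burnt incident edge, but burning at height $\deg_n(p_j)-1$ needs two. If the rest of the graph could only burn through $p_j$, recurrence is lost.

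A counterexample already exists for $b=s=2$, $j=1$, $n=2$. Here $p_1$ has four degree-$2$ neighbours $u_1,\dots,u_4$, each adjacent to the sink, and $\gamma=(a,u_1,p_1)$. Take $h_{p_1}=4$, $h_{u_1}=2$, $h_{u_2}=h_{u_3}=h_{u_4}=1$, with maximal heights on the other chain. This configuration is recurrent. After lowering $h_{p_1}$ to $3$, the set $\{p_1,u_2,u_3,u_4\}$ is a forbidden subconfiguration. So your map need not land in $\mathrm{R}_{G_n}$, and $\nu_n(h_{p_j}<\deg_n(p_j))\ge c$ is not established.

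\textbf{How to repair it.} Since $b\ge2$, the copy of $D_j$ between $a$ and $p_j$ contains two internally disjoint $a$--$p_j$ geodesics. Maximise the heights on both and lower $h_{p_j}$ by one.
\begin{itemize}
\item Both paths burn, and then $p_j$ has two burnt incident edges, so it burns at height $\deg_n(p_j)-1$.
\item Any set left unburnt avoids these paths and $p_j$, where the heights are unchanged. It would therefore be a forbidden subconfiguration of the original recurrent $h$, which is impossible.
\item Each fibre of this map has size at most $C'$, the product of the degrees along the two paths. This depends only on $b,s,j$.
\end{itemize}
Hence $1-\nu_n(R_n=0)=\nu_n(h_{p_j}=\deg_n(p_j))\le C'\,\nu_n(R_n=0)$, which gives $\nu_n(R_n=0)\ge 1/(1+C')$.

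\textbf{Why the step is unnecessary.} Your argument for $\ell=j+1$ works verbatim for every $\ell\ge j$. It gives $\nu(R\ge\ell)\ge b^{-\ell}\,s^jb(s-1)/(C(bs-1))>0$ for all $\ell$. Together with $\nu(R<\infty)=1$, this already rules out $R$ being a.s.\ equal to any constant $c$: take $\ell=c+1$. This is exactly how the paper concludes.

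\textbf{A minor slip.} In the finiteness step, closedness of $[\ell,\infty)$ gives the Portmanteau inequality in the wrong direction. For an upper bound you need that $\{R\ge\ell\}=\{R>\ell-1\}$ is open, or the convergence of point masses for integer-valued variables that you mention right after.
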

\begin{proof}
    The upper bound from~\eqref{ineq:AvUBpivotal} also proves that $R$ is {almost surely finite}. In fact, such an inequality is preserved through the limit procedure, yielding that
\begin{equation*}
    \nu(R=+\infty)=\nu\left(\bigcap_{\ell=j}^{\infty}\{R\geq \ell\}\right)=\lim_{\ell\to \infty}\nu\left(R\geq \ell\right)\leq \lim_{\ell\to \infty }b^{-\ell} \dfrac{s^jb(s-1)}{bs-1}=0.
\end{equation*}

Note also that $R$ is a {non-trivial} random variable. In fact, for fixed $n\in \mathbb{N}$ we can obtain a lower bound for its survival function through the first-wave and then use~\eqref{eq:lm1}:
\begin{equation*}
    \nu_n\left(R_n\geq \ell\right)\geq \nu_n\left(p_\ell\in W^{p_j}_1\right)\geq \dfrac{\mathbb{E}_n\left[N(p_j)\right]}{C}{\mathbb{P}_n^{p_j}}\left(p_{\ell}\in V\left(T^{p_j}\right)\right).
\end{equation*}
Using the numerical values computed in~\eqref{eq:C_ineq}, Corollary~\ref{cor:absorb} and {Corollary~\ref{cor:E[N]}, we obtain} the following lower bound:
\begin{equation*}
    \nu_n\left(R_n\geq \ell\right)\geq b^{-\ell}\,\dfrac{s^jb(s-1)}{C\left(bs-1\right)}\,\left(1-(bs)^{\ell-n}\right).
\end{equation*}
Taking the limit in the above formula {(along the appropriate subsequence), we} obtain that 
\begin{equation*}
    \nu\left(R\geq \ell\right)\geq b^{-\ell}\,\dfrac{s^jb(s-1)}{C\left(bs-1\right)},
\end{equation*}
proving that $R$ is almost surely {nonconstant}.
\end{proof}

\subsection{Pivotal reach limit for uniform waves}
{The notation at the start of this section remains the same in this subsection; Section~\ref{sec:Cb} uses the separate endpoint boundary condition.} {In this subsection, we} study the limit of the pivotal reach of a uniform wave. The limiting law can be explicitly computed and is geometric, after shifting. 

{For all $n>j$, work under $\mathbb{P}_n^{p_j}$. Define the pivotal reach of $T^{p_j}$ by}

\begin{equation*}
    L_n:=\max_{j\leq k<n}\left\{p_k\in T^{p_j}\right\}.
\end{equation*}
The careful reader may have noticed a slight {difference from the definition} of $R_n$: this is due to the fact that, unlike avalanches, a wave sampled uniformly contains at least one point, so we can translate our problem directly into spanning forests without considering the empty case.
\begin{lemma}\label{lemma:limit_L}
    For $j\in \mathbb{N}_+$, the random variable $L_n-j$ {converges in law to a $Geom(b^{-1})$ random variable} as $n$ goes to infinity.
\end{lemma}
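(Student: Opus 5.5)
The plan is to compute the law of $L_n$ exactly at finite $n$ and then let $n\to\infty$. The first step is to show that the events $\{p_k\in T^{p_j}\}$, $j\le k<n$, are nested. For $k<\ell$, the vertex $p_\ell$ lies in $T^{p_j}$ only if the forest path from $p_\ell$ to $p_j$ avoids $\partial$. As observed in the proof of the lower bound of Theorem~\ref{thm:powerlaw_wave}, such a path must pass through every intermediate pivotal point $p_k$: the subgraph between $a$ and $p_\ell$ in the chain containing $p_j$ is separated from the rest of the graph by $\{a,p_\ell\}$, and inside it $p_k$ is a cut vertex separating $p_j$ from $p_\ell$. Since $T^{p_j}$ is connected and contains neither $a$ nor $z$, we get $\{p_\ell\in T^{p_j}\}\subset\{p_k\in T^{p_j}\}$.

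With nestedness in hand, $\{L_n\ge k\}=\{p_k\in T^{p_j}\}$ for $j\le k<n$. Moreover $L_n\ge j$ always holds, since $p_j$ is the root of its own component. Proposition~\ref{prop:GK} then gives
\[
\mathbb{P}_n^{p_j}(L_n\ge k)=\mathcal{P}^{p_k}_n\left(\tau_{p_j}<\tau_\partial\right)=b^{j-k}\frac{1-(bs)^{k-n}}{1-(bs)^{j-n}},
\]
where the last equality is Corollary~\ref{cor:absorb}. For each fixed $m\ge0$ the right-hand side at $k=j+m$ converges to $b^{-m}$ as $n\to\infty$.

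Convergence of the tails $\mathbb{P}_n^{p_j}(L_n-j\ge m)\to b^{-m}$ for every $m\in\mathbb{N}$ is convergence of the distribution functions at every point, so it gives convergence in law of $L_n-j$ on $\mathbb{N}$. The limit has tail $b^{-m}$, hence point masses $(1-b^{-1})b^{-m}$. This is the geometric law counting failures before the first success, with success probability $1-b^{-1}$ and failure probability $b^{-1}$, which is how $Geom(b^{-1})$ in the statement should be read.

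The main obstacle is the nestedness step, because it relies on the precise separation structure of $D_n$. The key point is that $\{a,p_\ell\}$ disconnects the level-$\ell$ copy containing $p_j$ from the rest of the graph, so no detour around $p_k$ is possible. I would verify this by induction on the hierarchical construction, using that $p_k$ and $p_{k+1}$ are the endpoints of the $s$ copies of $D_k$ that are glued in series.
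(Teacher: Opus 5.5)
Your proposal is correct and follows essentially the paper's route: the paper likewise writes $\mathbb{P}_n^{p_j}(L_n\ge \ell)=\mathbb{P}_n^{p_j}(p_\ell\in T^{p_j})$, evaluates it exactly via Proposition~\ref{prop:GK} and Corollary~\ref{cor:absorb}, and lets $n\to\infty$, while you additionally make explicit the nestedness of the events $\{p_k\in T^{p_j}\}$ that the paper uses without comment. One small precision in that step: inside the copy of $D_\ell$, $p_k$ alone does not separate $p_j$ from $p_\ell$, since a path can go through $a$; the separating set is $\{a,p_k\}$, the endpoints of the copy of $D_k$ containing $p_j$, and the tree path is forced through $p_k$ precisely because $T^{p_j}$ avoids $a$, which your next sentence already uses.
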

\begin{proof}
    Corollary~\ref{cor:absorb} yields {an explicit expression for the distribution} of $L_n$. In fact, for all \mbox{$\ell\in \{j,\,\dots \,n-1\}$},
\begin{equation*}
    {\mathbb{P}_n^{p_j}}\left(L_n\geq {\ell}\right)={\mathbb{P}_n^{p_j}}\left(p_{\ell}\in T^{p_j}\right)=b^{j-\ell}\dfrac{1-(bs)^{\ell-n}}{1-(bs)^{j-n}}.
\end{equation*}
To make the connection between $L_n$ and a geometric random variable, consider $L_n-j$, its shifted version. {Then, taking the limit in $n$ of the above formula establishes the existence of a limit} $L$ taking values in $\mathbb{N}$ with survival function
\begin{equation*}
    \mathbb{P}\left(L\geq \ell \right)=b^{-\ell},
\end{equation*}
which coincides with that of a geometric random variable with success probability $b^{-1}$.
\end{proof}
We can provide a heuristic explanation of this fact. In fact, $L_n$ is very close to satisfying the {memoryless property}, which {characterises} geometric random variables among discrete ones. Fix $\ell \in \{1,\,\dots,\,n-2\}$ and {$k\in \{1,\,...,\,n-2-\ell\}$; then}
\begin{equation*}
    {\mathbb{P}_n^{p_j}}\left(L_n-j\geq \ell+k|\,L_n-j\geq \ell\right)={\mathbb{P}_n^{p_j}}\left(p_{j+\ell+k}\in T^{p_j}|\,p_{j+\ell}\in T^{p_j}\right).
\end{equation*}
$L_n-j$ would be memoryless if the above right-hand side were equal to ${\mathbb{P}_n^{p_j}}\left(p_{j+k}\in T^{p_j}\right)$. Although this is not true, using the groundskeeper algorithm one can show that
\begin{equation*}
    {\mathbb{P}_n^{p_j}}\left(p_{j+\ell+k}\in T^{p_j}|\,p_{j+\ell}\in T^{p_j}\right)={\mathbb{P}_n^{p_{j+\ell}}}\left(p_{j+\ell+k}\in T^{p_{j+\ell}}\right).
\end{equation*}
The connection between the two probabilities is given by {Corollary~\ref{cor:absorb}, which states} that ${\mathbb{P}_n^{p_j}}\left(p_{j+k}\in T^{p_j}\right)=b^{-k}(1+o_n(1))={\mathbb{P}_n^{p_{j+\ell}}}\left(p_{j+\ell+k}\in T^{p_{j+\ell}}\right)$.

\section{Size of the uniform wave at \texorpdfstring{$z$}{z} (Theorem~\ref{thm:limit Dbs})}\label{sec:Cb}
In this section the ASM is defined on the original graph $D_n$ with $a$ as its sole sink, and a uniform wave is started at $z$. Its source-wired graph is $D_n/\{a,z\}=G_n$, rather than $G_n^x$ for an additional interior source. Thus the wave--forest correspondence of Section~\ref{subsec:ASM} uses the law $\mathbb{P}_{G_n}$.

Separating $a$ and $z$ in a spanning tree of $G_n$ gives a two-component spanning forest $(T_n^a,T_n^z)$ on $D_n$. We characterise the distribution of
\[
 \mathrm{C}_n:=\abs{V(T_n^z)\setminus\{z\}}
\]
under $\mathbb{P}_{G_n}$. Excluding $z$ gives the convention used in Proposition~\ref{prop:deco Dbs}; adding one recovers the full wave size.

The recursive structure of our graphs can be exploited to express $\mathrm{C}_n$ in terms of $\mathrm{C}_{n-1}$. The key idea is to look at how many \emph{junction vertices} lie in $\mathrm{C}_n$. Junction vertices are {the points connecting the copies of $D_{n-1}$ in the chains} (see Fig.~\ref{fig:decom}).

Fix $n\in \mathbb{N}_+$ and a labelling system for each chain in $D_n$. Denote the junction vertices in $D_n$ as {the double-indexed collection} $(p^{j,k})_{(j,k)\in \{1,...,b\}\times \{1,...,s-1\}}\subset V_n$ satisfying
\begin{equation*}
    d_n\left(z,\,p^{j,k}\right)=ks^{n-1}, \quad p^{j,k}\in \{j\textrm{-th chain}\}.
\end{equation*}
Essentially, the index $j$ encodes the chain {to which $p^{j,k}$ belongs} and $k$ the distance to $z$ ({on the scale $s^{n-1}$}).
Note that each $p^{j,k}$ connects two copies of $D_{n-1}$ so it follows that $\deg_n(p^{j,k})=2b^{n-1}$.

 \begin{figure}[ht]  
 \begin{tikzpicture}[x=1cm,y=1cm,line width=0.85pt,
  line cap=round,line join=round,every node/.style={font=\Large}]
  % Three branches; each consecutive pair has two parallel edges.
  \coordinate (A) at (0,0);
  \coordinate (B) at (12,0);
  \foreach \i/\y in {1/2.35,2/0,3/-2.35}{
    \coordinate (p\i3) at (3,\y);
    \coordinate (p\i2) at (6,\y);
    \coordinate (p\i1) at (9,\y);
    \draw (p\i3) .. controls +(0.9,0.68) and +(-0.9,0.68) .. (p\i2);
    \draw (p\i3) .. controls +(0.9,-0.68) and +(-0.9,-0.68) .. (p\i2);
    \draw (p\i2) .. controls +(0.9,0.68) and +(-0.9,0.68) .. (p\i1);
    \draw (p\i2) .. controls +(0.9,-0.68) and +(-0.9,-0.68) .. (p\i1);
  }
  % Upper branch.
  \draw (A) .. controls (-0.2,1.5) and (1.5,2.35) .. (p13);
  \draw (A) .. controls (1.2,1.0) and (2.6,1.15) .. (p13);
  \draw (p11) .. controls (10.5,2.35) and (12.2,1.5) .. (B);
  \draw (p11) .. controls (9.4,1.15) and (10.8,1.0) .. (B);
  % Middle branch.
  \draw (A) .. controls (1,0.45) and (2,0.45) .. (p23);
  \draw (A) .. controls (1,-0.45) and (2,-0.45) .. (p23);
  \draw (p21) .. controls (10,0.45) and (11,0.45) .. (B);
  \draw (p21) .. controls (10,-0.45) and (11,-0.45) .. (B);
  % Lower branch.
  \draw (A) .. controls (-0.2,-1.5) and (1.5,-2.35) .. (p33);
  \draw (A) .. controls (1.2,-1.0) and (2.6,-1.15) .. (p33);
  \draw (p31) .. controls (10.5,-2.35) and (12.2,-1.5) .. (B);
  \draw (p31) -- (B);
  % Vertices and labels.
  \foreach \i in {1,2,3}{
    \foreach \j in {1,2,3}{
      \fill (p\i\j) circle[radius=1.65pt];
      \ifnum\i=3
        \node[above=5pt,xshift=8pt] at (p\i\j) {$p^{\i,\j}$};
      \else
        \node[above=5pt] at (p\i\j) {$p^{\i,\j}$};
      \fi
    }
  }
  \fill (A) circle[radius=1.65pt];
  \fill (B) circle[radius=1.65pt];
  \node[left=10pt] at (A) {$a$};
   \node[right=10pt] at (B) {$z$};
\end{tikzpicture}
     \caption{A sketch of the graph $D^{3,4}_n$ is shown with the vertices {$(p^{j,k})_{(j,k)\in \{1,...,b\}\times \{1,...,s-1\}}$}.}
     \label{fig:decom}
 \end{figure}

{Accordingly, for each fixed} $j\in \{1,...,b\}$, define the partition of events $(E^{j,k})_{k\in \{0,...,s-1\}}$ where the index $k$ shows how many $p^{j,k}$'s are in $T^z_n$:
\begin{align*}
    &E^{j,k}\coloneq \left\{p^{j,k}\in T^z_n,\, p^{j,k+1}\notin T^z_n\right\},\, \forall k\in \{1,\,...,\,s-2\}\\
   &E^{j,0}=\left\{p^{j,1}\notin T^z_n\right\} , \, E^{j,s-1}=\left\{p^{j,s-1}\in T^z_n\right\}.
\end{align*}
In words, for fixed $j$, $E^{j,k}$ denotes how many {successive junction points in the $j$-th chain belong to $T^z_n$}.
The next lemma shows that, for all fixed $j\in \{1,\,\ldots\,,b\}$, $(E^{j,k})_{k\in \{0,\,\ldots\,,s-1\}}$ is a uniform partition with respect to $\bbP_{{G_n}}$.
\begin{lemma}\label{lm:p in C}
    For all $j\in \{1,\,\dots,\,b\}$ and  $k \in \{0,\,\dots,\,s-1\}$ it holds that
    \begin{equation*}       \bbP_{{G_n}}\left(E^{j,k}\right)=1/s.
    \end{equation*}
\end{lemma}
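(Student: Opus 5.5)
The plan is to reduce each event $E^{j,k}$ to statements about single junction vertices and then compute the resulting one-point probabilities with Proposition~\ref{prop:GK} and the gambler's ruin~\eqref{eq:gambler}.

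\textbf{Step 1: monotonicity in $k$.} I first argue that $\{p^{j,k+1}\in T^z_n\}\subset\{p^{j,k}\in T^z_n\}$ for $1\le k\le s-2$. The vertex $p^{j,k}$ is a cut vertex of the $j$-th chain: it separates $p^{j,k+1}$ from $z$ except through $a$. In the forest $(T^a_n,T^z_n)$ on $D_n$, the path joining $p^{j,k+1}$ to $z$ inside $T^z_n$ cannot contain $a$. Hence this path stays in the $j$-th chain and must pass through $p^{j,k}$. It follows that
\[
\bbP_{G_n}(E^{j,k})=\bbP_{G_n}(p^{j,k}\in T^z_n)-\bbP_{G_n}(p^{j,k+1}\in T^z_n),
\]
with the convention that $\bbP_{G_n}(p^{j,0}\in T^z_n)=1$ (since $p^{j,0}=z$) and $\bbP_{G_n}(p^{j,s}\in T^z_n)=0$. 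The boundary cases $E^{j,0}$ and $E^{j,s-1}$ fit the same telescoping formula.

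\textbf{Step 2: one-point probabilities.} Since $G_n=D_n/\{a,z\}$, Proposition~\ref{prop:GK} applied on $D_n$ with roots $z$ and $a$ gives
\[
\bbP_{G_n}(p^{j,k}\in T^z_n)=\mathcal{P}^{p^{j,k}}_n(\tau_z<\tau_a).
\]
To evaluate this, I project the walk started at $p^{j,k}$ via $\widetilde X_\cdot=d_n(z,X_\cdot)$, exactly as in Lemma~\ref{lm:gambler G}. Before absorption the walk stays in the $j$-th chain. By the symmetry of the chain, $\widetilde X$ is a simple symmetric random walk on $\{0,\dots,s^n\}$ started at $ks^{n-1}$. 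The gambler's ruin~\eqref{eq:gambler} then yields
\[
\mathcal{P}^{p^{j,k}}_n(\tau_z<\tau_a)=1-\frac{ks^{n-1}}{s^n}=1-\frac ks.
\]

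\textbf{Step 3: conclusion.} Substituting Step 2 into the telescoping formula of Step 1 gives $(1-k/s)-(1-(k+1)/s)=1/s$ for every $k\in\{0,\dots,s-1\}$, including the two boundary cases.

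\textbf{Main obstacle.} The only real subtlety is justifying the projection in Step 2. One must check that, within a chain, distance to $z$ is a lumpable function of the walk. This holds because every vertex at distance $d$ from $z$ with $0<d<s^n$ has an equal number of edges toward $z$ and toward $a$. This is the same fact already used in Lemma~\ref{lm:gambler G}, here read from $z$ rather than from $a$, so it can be invoked as established.
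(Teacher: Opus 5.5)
Your proof is correct, and it differs from the paper's only in how the intermediate events are handled. The one-point computation in your Step 2 is exactly the paper's: Proposition~\ref{prop:GK} on $D_n$ with roots $z,a$, followed by the projected gambler's ruin, gives $\bbP_{G_n}(p^{j,k}\in T^z_n)=(s-k)/s$. The paper likewise reads off the cases $k=0$ and $k=s-1$ directly from this. Your lumpability justification (every interior vertex has as many edges toward $a$ as toward $z$) is also correct.

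For $1\le k\le s-2$ the paper argues differently. It uses the cut-vertex property of $p^{j,k}$ to claim the conditional identity $\bbP_{G_n}(p^{j,k+1}\notin T^z_n\mid p^{j,k}\in T^z_n)=\mathcal{P}^{p^{j,k+1}}_{D_n}(\tau_a<\tau_{p^{j,k}})$. It evaluates this as $1/(s-k)$ by a second gambler's-ruin computation and multiplies by $(s-k)/s$. You use the same cut-vertex fact only to obtain the inclusion $\{p^{j,k+1}\in T^z_n\}\subset\{p^{j,k}\in T^z_n\}$, and then telescope one-point probabilities.

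Your route is more self-contained, because it needs Proposition~\ref{prop:GK} only in the form stated. The paper's conditional identity requires an extra conditioning argument that it only sketches. One way to supply it is to run Wilson's algorithm from $p^{j,k}$ first: the resulting path to $z$ cannot enter the $a$-side of $p^{j,k}$, so the walk from $p^{j,k+1}$ meets the tree only at $p^{j,k}$ or $a$. In exchange, the paper's route exhibits the conditional probability $1/(s-k)$ of stopping at the next junction, but that also follows immediately from your result.
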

\begin{proof}
    Fix $j\in \{1,\,\dots,\,b\}$ and $k\in \{1,\,\dots,\,s-1\}$. {The GA} yields that
    \begin{equation}
        \label{eq:GK_j_k}\bbP_{{G_n}}\left(p^{j,k}\in T^z_n\right)=\mathcal{P}^{p^{j,k}}_{D_n}\left(\tau_z<\tau_a\right).
    \end{equation}
    The event {on the right-hand side} depends only on the edges in the $j$-th chain of $D_n$. In one such chain $p^{j,k}$ is the only vertex at distance $ks^{n-1}$ from $z$. Hence, by looking at the projected one-dimensional random walk, the probability {on the right-hand side} of~\eqref{eq:GK_j_k} can be computed through the gambler's ruin problem~\eqref{eq:gambler}:
    \begin{equation}\label{eq:p in C}
       \bbP_{{G_n}}\left(p^{j,k}\in T^z_n\right)=\mathcal{P}^{ks^{n-1}}_{\{0,\dots,s^n\}}\left(\tau_{0}<\tau_{s^n}\right)=\dfrac{s-k}{s}.
    \end{equation}
    Notice that the above result proves the claim for $k=0$ and $k=s-1$. For the other values of $k$, {the GA} and the fact that all the paths leading from $p^{j,k+1} $ to $z$ pass through $p^{j,k}$ lead to
    \begin{equation*}
        \bbP_{{G_n}}\left(p^{j,k+1}\notin T^z_n|p^{j,k}\in T^z_n\right)=\mathcal{P}^{p^{j,k+1}}_{D_n}\left(\tau_a<\tau_{p^{j,k}}\right),
    \end{equation*}
    which {can again be computed} by means of the projected random walk:
    \begin{equation*}        \bbP_{{G_n}}\left(p^{j,k+1}\notin T_n^z|\,p^{j,k}\in T_n^z\right)=\mathcal{P}^{p^{j,k+1}}_{D_n}\left(\tau_a<\tau_{p^{j,k}}\right)=\dfrac{d_n\left(p^{j,k},\,p^{j,k+1}\right)}{d_n\left(a,\,p^{j,k}\right)}=\dfrac{1}{s-k}.
    \end{equation*}
    Using the above equation and~\eqref{eq:p in C} we get the desired result.
\end{proof}
We will now show how to decompose $\mathrm{C}_n$ on such sets. Recall that $c_n$ has been defined in~\eqref{def:c_n} as the number of edges of an element in $\Omega_{D_n}$.
\begin{prop}\label{prop:deco Dbs}
Let $(\mathcal{U}_{i,k})_{i,k \in \mathbb{N}}$ be an independent double-indexed collection of uniform random variables over $\{0,\,...,\,s-1\}$. Define further
\begin{equation*}    
V_{j,\,k}\coloneq c_k\sum_{i=1}^{b^j}\mathcal{U}_{i,k}, \quad k \in \{0,...,n\}.
\end{equation*}
 Then under $\bbP_{{G_n}}$
    \begin{equation}\label{eq:C_n_decomp}
        \mathrm{C}_n\overset{d}{=}\sum_{i=1}^n V_{i,n-i}.
    \end{equation}
\end{prop}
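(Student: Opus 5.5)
We prove \eqref{eq:C_n_decomp} by induction on $n$, using a recursive distributional identity of the form
\[
\mathrm{C}_n\overset{d}{=}\sum_{j=1}^{b} c_{n-1}\,\mathcal{U}_{j,n-1}+\sum_{j=1}^{b}\mathrm{C}^{(j)}_{n-1},
\]
where $\mathcal{U}_{j,n-1}$ is the number of junction vertices of the $j$-th chain that lie in $T^z_n$, and the $\mathrm{C}^{(j)}_{n-1}$ are i.i.d.\ copies of $\mathrm{C}_{n-1}$, independent of the $\mathcal{U}$'s. Iterating this identity gives the claim. At level $n$ we get $b$ uniform terms times $c_{n-1}$. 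Each copy of $\mathrm{C}_{n-1}$ then contributes $b$ uniforms times $c_{n-2}$, for $b^2$ in total, and so on down to $b^n$ uniforms times $c_0$ at the bottom. This is exactly $\sum_{i=1}^n V_{i,n-i}$, with the base case $\mathrm{C}_0=0$ on $D_0$ since $D_0$ has no vertices other than $a$ and $z$.

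To establish the recursion, work with the two-component forest $(T^a_n,T^z_n)$ under $\mathbb{P}_{G_n}$. Sample it by Wilson's algorithm rooted at $\{a,z\}$, or equivalently via the spatial Markov property of the uniform spanning forest, first revealing the junction vertices. Fix a chain $j$ and suppose $E^{j,k}$ occurs. Junction vertices in a chain are cut vertices of that chain, so the chain splits into $s$ copies of $D_{n-1}$. The $k$ copies lying between $z$ and $p^{j,k}$ have both endpoints in $T^z_n$. Every non-endpoint vertex of such a copy connects to $T^z_n$ only through that copy's endpoints, so it belongs to $T^z_n$. These copies therefore contribute $k(c_{n-1}-1)$ interior vertices plus the $k$ junctions, i.e.\ $k\,c_{n-1}$ vertices in total (recall $c_{n-1}=|V_{n-1}|-2+1$).

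By the same argument, the $s-k-2$ copies strictly between $p^{j,k+1}$ and $a$ contribute nothing. The single copy between $p^{j,k}$ and $p^{j,k+1}$ has its endpoints in different components. Conditionally on this, the restriction of the forest to that copy is a uniform two-component spanning forest of $D_{n-1}$ rooted at its two endpoints. Its $z$-side interior count therefore has the law of $\mathrm{C}_{n-1}$, with $p^{j,k}$ playing the role of $z$ and already counted. When $k=0$ this copy is the one adjacent to $z$, and when $k=s-1$ it is the one adjacent to $a$, so every case contributes exactly one copy of $\mathrm{C}_{n-1}$.

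Independence comes in two layers. The chains meet only at $a$ and $z$, which are roots, so the restrictions of the forest to different chains are independent. Within a chain, conditioning on which junctions lie in which component factorises the forest into independent uniform forests on the $s$ copies, and the mixed copy carries the law $\mathbb{P}_{G_{n-1}}$. Lemma~\ref{lm:p in C} shows that $k$ is uniform on $\{0,\dots,s-1\}$, and the conditional law of the mixed copy does not depend on $k$. Hence the pair (junction count, mixed-copy count) is an independent product, and the recursion follows.

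The main obstacle is justifying this factorisation rigorously, namely that conditioning on the component labels of the cut vertices makes the pieces independent uniform (rooted) forests. I would argue this by counting. Spanning forests of $D_n$ rooted at $\{a,z\}$ with a prescribed labelling of all junctions are in bijection with products, over the copies of $D_{n-1}$, of a spanning tree of the copy when its endpoints share a label and a two-rooted forest when the labels differ. The total count is then a product. It depends on the labelling only through how many copies are mixed, and exactly one copy per chain is mixed under every admissible labelling. This yields both the uniformity of $k$ and the independence.
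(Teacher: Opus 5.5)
Your proposal is correct and follows essentially the paper's proof: induction on $n$ via the junction-vertex decomposition of each chain, using the uniform junction count from Lemma~\ref{lm:p in C} plus a fresh copy of $\mathrm{C}_{n-1}$ from the straddling copy of $D_{n-1}$; your base case $\mathrm{C}_0=0$ reproduces the paper's $n=1$ case. Your counting bijection also makes rigorous the conditional independence and the law of the straddling copy, which the paper's display~\eqref{eq:prior} uses without comment. The only slip is that there are $s-k-1$, not $s-k-2$, copies between $p^{j,k+1}$ and $a$, which is harmless since they contribute nothing.
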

\begin{proof}
    The key observation is that, for all integers $n\geq 2$, a uniform spanning tree of ${G_n}$ is distributed as a vector composed of $b$ independent uniform spanning trees {on a chain (wired at $\{a,\,z\}$).}

    We proceed by induction over $n$. 
    
  We start from the case $n=1$. As just stated,  $T^z_1$ is distributed as $b$ independent uniform spanning trees of each chain. Each chain is a graph with vertex set $\{a,\,p^{j,s-1},\,\dots,\,p^{j,1},\,z\}$  and edge set $\{\overline{ap^{j,s-1}},\,\overline{p^{j,s-1}p^{j,s-2}},\,\dots,\,\overline{p^{j,2}p^{j,1}},\,\overline{p^{j,1}z}\}$ for a fixed $j\in \{1,\,\dots,\,b\}$. Each chain has $s$ spanning tree configurations. Since each configuration is uniquely determined by how many vertices are connected to $z$, the number of vertices in the connected component of $z$ is uniformly distributed in $\{0,\,\dots,\,s-1\}$ without counting $z$ itself. Hence, using the independence of {the configurations in the branches, we get}
    \begin{equation*}    \mathrm{C}_1\overset{d}{=}\sum_{i=1}^{b}\mathcal{U}_{i,0}. 
\end{equation*}

    Suppose now that the decomposition formula holds for $n-1$ and {denote by} $\left(\mathrm{C}_{n-1}(\ell,\,m)\right)_{\ell,m\in\mathbb{N}}$ {a collection of independent copies} of $\mathrm{C}_{n-1}$. {As in the base case, we write} $\mathrm{C}_n$ as a sum of $b$ {independent connected components,} each living in a different chain. For all $j\in \{1,\,\ldots\,,b\}$, decompose the connected component in the $j$-th chain on the partition $\left(E^{j,k}\right)_{k\in \{0,...,s-1\}}$ to get   \begin{equation}\label{eq:prior}
        \mathrm{C}_n\overset{d}{=}\sum_{j=1}^b\sum^{s-1}_{k=0}\ONE\{E^{j,k}\}\left(kc_{n-1}+\mathrm{C}_{n-1}(j,k)\right)\overset{d}{=}\sum_{j=1}^b\left[\mathrm{C}_{n-1}(j,s+j)+\sum_{k=0}^{s-1}\ONE\{E^{j,k}\}kc_{n-1}\right].
    \end{equation}
    Note that the use of $\mathrm{C}_{n-1}(j,s+j)$ {on the right-hand side} is only to indicate that we are summing over fresh independent copies of $\mathrm{C}_{n-1}$.
    {By the inductive hypothesis, $\mathrm{C}_{n-1}$} is distributed as
    \begin{equation*}
        \mathrm{C}_{n-1}\overset{d}{=}\sum_{j=1}^{n-1}c_{n-(j+1)}\sum_{i=1}^{b^j}\mathcal{U}_{i,n-(j+1)}.
    \end{equation*}
    Hence the sum of $b$ independent copies of $\mathrm{C}_{n-1}$ is distributed as
    \begin{equation}\label{eq:u2}    \sum_{j=1}^b\mathrm{C}_{n-1}(j,\,s+j)\overset{d}{=}\sum_{j=1}^{n-1}c_{n-(j+1)}\sum_{i=1}^{b^{j+1}}\mathcal{U}_{i,n-(j+1)}=\sum_{j=2}^nc_{n-j}\sum_{i=1}^{b^j}\mathcal{U}_{i,n-j}.
    \end{equation}
     {Furthermore, by Lemma~\ref{lm:p in C}, $\left\{E^{j,k}\right\}_{k\in \{0,\dots,s-1\}}$} is a uniform partition of the state space, for each $j$. It follows that    \begin{equation}\label{eq:u1}
        \sum_{k=0}^{s-1}\ONE\{E^{j,k}\}kc_{n-1}\overset{d}{=}c_{n-1}\mathcal{U}_{j,n-1}.
    \end{equation}
    Using the expressions obtained in~\eqref{eq:u2} and~\eqref{eq:u1} inside~\eqref{eq:prior}, one obtains that
    \begin{equation*}
     \mathrm{C}_n\overset{d}{=}\sum_{j=2}^nc_{n-j}\sum_{i=1}^{b^j}\mathcal{U}_{i,n-j}+c_{n-1}\sum_{i=1}^b\mathcal{U}_{i,n-1}=\sum_{j=1}^nc_{n-j}\sum_{i=1}^{b^j}\mathcal{U}_{i,n-j},
    \end{equation*}    
    which is exactly the expression in the claim.
\end{proof}
\begin{remark}\label{rem:asymp}
    {Using the decomposition of Proposition~\ref{prop:deco Dbs}, one can} obtain an exact formula for all the moments of $\mathrm{C}_n$. The expected value $\mu_n:=\mathbb{E}\left[\mathrm{C}_n\right]$ can easily be guessed through a symmetry argument: the total number of vertices without counting $a$ and $z$ should be equally split between $T^a_n$ and $T^z_n$, hence $\mu_n=\left(\abs{V_n}-2\right)/2\asymp (bs)^n$. We show how to compute the variance of $\mathrm{C}_n$, which is useful to analyse its asymptotics:
 \begin{equation*}
     \Var\left(\mathrm{C}_n\right)=\sum_{i=1}^nc^2_{n-i}b^i\dfrac{s^2-1}{12}\asymp (bs)^{2n}.
 \end{equation*}

\end{remark}

The decomposition of Proposition~\ref{prop:deco Dbs} {provides a way to analyse} the asymptotics of the distribution of $\mathrm{C}_n$. It is natural to consider its rescaled version:
\begin{equation*}
     \quad Q_n\coloneq \mathrm{C}_n/\mu_n,
\end{equation*}
which takes values in a discrete subset of $\left[0,2\right]$. 

{To emphasise the truly random nature of $\mathrm{Q}_n$, we can} immediately notice that it does not converge in probability to $1$ ({and consequently does not converge to any constant} in $[0,2]$). In fact, since $Q_n$ takes values in a compact set, its convergence to $1$ in probability would imply the convergence to zero of its variance. {However, $\Var\left(\mathrm{Q}_n\right)=\Var\left(\mathrm{C}_n\right)/\mu_n^2$} and in Remark~\ref{rem:asymp} we have shown that $\Var\left(\mathrm{C}_n\right)\asymp\mu_n^2$: hence $\Var\left(\mathrm{Q}_n\right)$ does not vanish in the limit, proving that $\mathrm{Q}_n$ does not converge in probability to a constant.

We proceed by looking for a random limit for $Q_n$. The decomposition through uniform random variables {proved in Proposition}~\ref{prop:deco Dbs} {allows us to compute explicitly} the {moment-generating function} of $\mathrm{Q}_n$, whose limit we determine.

\begin{proof}[Proof of Theorem~\ref{thm:limit Dbs}]
Fix $t\in \mathbb{R}$ and {set $x_{n,i}\coloneq c_{n-i}/\mu_n$}. Using Proposition~\ref{prop:deco Dbs}, the {moment-generating function} of $\mathrm{Q}_n$ {can be written as}
    \begin{equation}\label{eq:log_M}
        \bbE\left[\exp\left(\dfrac{t}{\mu_n}\sum_{i=1}^n V_{i,n-i}\right)\right]=\prod_{i=1}^{n}\left(\bbE\left[\exp\left(tx_{n,i}\,\mathcal{U}\right)\right]\right)^{b^i}=\prod_{i=1}^{n}\left(\dfrac{1}{s}\sum_{k=0}^{s-1}\exp\left(tx_{n,i}k\right)\right)^{b^i}\coloneq M_n(t).
    \end{equation}
    To show the convergence in $n$ of $M_n$, we take logarithms, {which gives}
     \begin{equation*}        \log\left(M_n(t)\right)=\sum_{i=1}^{n}b^i\log\left(\dfrac{1}{s}\sum_{k=0}^{s-1}\exp\left(tx_{n,i}\,k\right)\right).
    \end{equation*}
    For all $i\in \mathbb{N}$, the ratio   $x_{n,i}$ converges to $ {2(bs)^{-i}}$ as $n\to \infty$. We want to apply the dominated convergence theorem. Using the monotonicity of the logarithm, we obtain that
    \begin{equation}\label{eq:dom_conv}
        b^i\log\left(\dfrac{1}{s}\sum_{k=0}^{s-1}\exp\left(tx_{n,i}k\right)\right)\leq b^i|t|x_{n,i}(s-1).
    \end{equation}    
    The ratio $x_{n,i}$ can be explicitly written as
\begin{equation*}
    x_{n,i}=(bs)^{-i}\,2\dfrac{(bs)^nb(s-1)+(bs)^i(b-1)}{\left((bs)^n-1\right)b(s-1)}\leq (bs)^{-i}\,2\dfrac{(bs)^n}{(bs)^n-1}\,\dfrac{bs-1}{b(s-1)}< \,(bs)^{-i}\dfrac{4(bs-1)}{b(s-1)}.
\end{equation*}
Using the above inequality in~{\eqref{eq:dom_conv}, we get} that
\begin{equation*}
    b^i\log\left(\dfrac{1}{s}\sum_{k=0}^{s-1}\exp\left(tx_{n,i}k\right)\right)\leq s^{-i}\,|t|\,\dfrac{4(bs-1)}{b},
\end{equation*}
and the infinite sum (in $i$) of the right-hand side converges for all fixed $t$. This proves {the hypotheses of the dominated convergence theorem, which yields}
\begin{equation*}
    \lim_{n\to \infty}\log\left(M_n(t)\right)=\sum_{i=1}^{\infty}b^i\log\left(\dfrac{1}{s}\sum_{k=0}^{s-1}\exp\left(\dfrac{2tk}{(bs)^i}\right)\right)<\infty.
\end{equation*}
The finiteness of the limit follows from the dominated convergence theorem.
\end{proof}
\begin{remark}\label{rem:infinite_series_Q}
    The limit of $\mathrm{Q}_n$ admits an expression through a random series. Recall that
    \begin{equation*}
        \mathrm{Q}_n\overset{d}{=}\sum_{i=1}^nx_{n,i}\sum_{j=1}^{b^{i}}\mathcal{U}_{i,j}.
    \end{equation*}
    Since for all fixed $i\in \mathbb{N},\, x_{n,i}\to 2(bs)^{-i}$ {as $n\to \infty$, the natural guess} for $\mathrm{Q}$ would be
    \begin{equation*}
        \mathrm{Q}\overset{d}{=}2\sum_{i=1}^{\infty}\dfrac{1}{(bs)^i}\sum_{j=1}^{b^i}\mathcal{U}_{i,j},
    \end{equation*}
    which is {well defined since it converges} absolutely almost surely. It holds that $\mathbb{E}\left[e^{t\mathrm{Q}}\right]=e^{f(t)}$.
    Furthermore, this representation tells us that $\mathbb{P}(\mathrm{Q}=0)=0$, and therefore, for any sequence $(t_n)_{n\in \mathbb{N}}$ with $t_n=o_n(\mu_n)$,
    \[
    \lim_{n\to\infty}\mathbb{P}(\mathrm{C}_n>t_n)=\lim_{n\to\infty}\mathbb{P}\left(\mathrm{Q}_n>\frac{t_n}{\mu_n}\right)=1.
    \]
\end{remark}

\bibliographystyle{abbrvnat}
\bibliography{ref}
\end{document}